\newcommand*\circled[1]{\tikz[baseline=(char.base)]{
    \node[shape=diamond,draw,inner sep=0pt] (char) {#1};}}
\newcommand*\thickcircled[1]{\tikz[baseline=(char.base)]{
    \node[shape=diamond,draw,inner sep=0pt,very thick] (char) {#1};}}
\newcommand*\diamonded[1]{\tikz[baseline=(char.base)]{
    \node[shape=circle,draw,inner sep=1pt] (char) {#1};}}
\newcommand*\thickdiamonded[1]{\tikz[baseline=(char.base)]{
    \node[shape=circle,draw,inner sep=1pt,very thick] (char) {#1};}}
\newcommand*\thickcirclediamonded[1]{\tikz[baseline=(char.base)]{
    \node[shape=diamond,draw,inner sep=0pt,very thick] (char) {#1};
    \node[shape=circle,draw,inner sep=1pt,very thick] (char) {#1};}}
\newtheorem{theorem}{Theorem}[section]
\newtheorem{lemma}[theorem]{Lemma}
\newtheorem{cor}[theorem]{Corollary}
\newtheorem{prop}[theorem]{Proposition}
\newtheorem{cnj}[theorem]{Conjecture}
\theoremstyle{definition}
\newtheorem{definition}[theorem]{Definition}
\theoremstyle{remark}
\newtheorem{remark}[theorem]{Remark}
\numberwithin{equation}{section}
\newcommand{\Set}[1]{\ensuremath{\mathcal{#1}}}
\newcommand{\Mat}[1]{\ensuremath{\mathbf{#1}}}
\newcommand{\Dfn}[1]{\emph{#1}}
\newcommand{\naturals}{\mathbb N}
\newcommand{\integers}{\mathbb Z}
\let\size=\abs
\newcommand\ben{\begin{enumerate}}
\newcommand\een{\end{enumerate}}
\newcommand\beq{\begin{equation}}
\newcommand\eeq{\end{equation}}
\newcommand{\F}{(x,y)}
\DeclareMathOperator{\swapall}{\Phi}
\DeclareMathOperator{\swap}{\phi}
\DeclareMathOperator{\switch}{\mu}%
\renewcommand{\b}{{\bf b}}
\renewcommand{\t}{{\bf t}}
\newcommand{\h}{{\bf h}}
\renewcommand{\P}{\Set P}
\newcommand{\PP}{\Set{\widetilde P}}
\def\mn{\mbox{-}}
\renewcommand{\S}{\mathcal S}
\newcommand{\R}{\mathcal P}
\newcommand{\nP}{\hat P}
\newcommand{\B}{{\Set B}_{T,B}}
\newcommand{\BB}{{\Set B}}
\newcommand{\LPM}{{\mathfrak L}_{T,B}}
\DeclareMathOperator{\Tab}{Tab}
\newcommand{\D}{\Set D}
\newcommand{\T}{\Set T}
\def\ec{e_a}
\def\er{e_l}
\def\ecb{e_{\bar a}}%
\def\erb{e_{\bar l}}%
\def\bij{\Psi}
\def\N{-- ++(0,1) circle(1.2pt)}
\def\E{-- ++(1,0) circle(1.2pt)}
\def\n{-- ++(0,1) circle(1.2pt)}
\def\e{-- ++(1,0) circle(1.2pt)}
\def\s{-- ++(0,-1) circle(1.2pt)}
\renewcommand{\labelenumi}{(\roman{enumi})}
\begin{document}

\title{Symmetries of statistics on lattice paths between two boundaries}
\author{Sergi Elizalde}%
\email{sergi.elizade@dartmouth.edu}%
\curraddr{Department of Mathematics, Dartmouth College, Hanover, NH
  03755, USA}%
\thanks{First author partially supported by grant DMS-1001046 from the NSF, 
by grant \#280575 from the Simons Foundation, and by grant H98230-14-1-0125 from the NSA}
\author{Martin Rubey}%
\email{martin.rubey@tuwien.ac.at}%
\curraddr{Institut f\"ur Diskrete Mathematik und Geometrie, TU Wien, Austria}%
\thanks{Second author partially supported by EC's IHRP Programme,
  grant HPRN-CT-2001-00272, \lq\lq Algebraic Combinatorics in Europe\rq\rq\ and the European Research
Council (ERC), Project number 306445}%
\subjclass[2010]{Primary 05A19; Secondary 52C05, 05A15, 05B35, 05E05}
\keywords{lattice path, combinatorial statistic, bijection, triangulation, semistandard Young tableau, flagged tableau, Tutte polynomial, matroid}%

\date{}

\begin{abstract}
We prove that on the set of lattice paths with steps $N=(0,1)$ and $E=(1,0)$ that lie between two fixed boundaries $T$ and $B$ (which are themselves lattice paths), the statistics
`number of $E$ steps shared with $B$' and `number of $E$ steps shared with $T$' have a symmetric joint distribution. To do so, we give an involution that switches these statistics, preserves additional parameters, and
generalizes to paths that  contain steps $S=(0,-1)$ at prescribed $x$-coordinates.
We also show that a similar equidistribution result for path statistics follows from the fact that the Tutte polynomial of a matroid is independent of the order of its ground set.
We extend the two theorems to $k$-tuples of paths between two boundaries, and we give some applications to Dyck paths, generalizing a result of Deutsch, to watermelon configurations, to pattern-avoiding permutations, and to the generalized Tamari lattice.

 Finally, we prove a conjecture of Nicol\'as about the distribution of degrees of $k$ consecutive vertices in $k$-triangulations of a convex $n$-gon. To achieve this goal, we provide a
new statistic-preserving bijection between certain $k$-tuples of non-crossing paths and $k$-flagged semistandard Young tableaux, which is based on local moves reminiscent of {\it jeu de taquin}.
\end{abstract}

\maketitle

\section{Introduction}\label{sec:intro}

Lattice paths have been studied for centuries. Following a recent
survey~\cite{MR2609483}, we can find a drawing of a lattice path
already in a paper from 1878 by William
Whitworth~\cite{Whitworth1878} (in a table between page 128 and 129
of the journal), who used it to describe a solution of the ballot
problem.  Joseph Bertrand rediscovered the problem and its solution
in 1887.  A little later he stresses that, although posed as a
recreational exercise, the problem is important because of its
connection with the gambler's ruin.

Nowadays, a frequently stated motivation to explore lattice paths is the study of polymers in
dilute solution.  Neglecting any interactions among the monomers we
obtain the simplest model of a polymer, a so-called \lq ideal
chain\rq, which is just a random walk.  Being a little more
realistic, one would not allow several monomers to share a single
location, and thus consider self-avoiding walks,
see~\cite{Vanderzande1998}.  However, this model is
mathematically hardly tractable, which possibly explains why it
remains such a great source of inspiration for combinatorialists.  In
particular, one of the main problems ---without any satisfying
solution in sight--- is to determine the number of paths of given
length, at least approximately.

A simpler model, still interesting from the physicists' point of view,
is that of directed lattice paths: choose a preferred
direction on the lattice and allow only steps whose
projection onto this direction is non-negative.  For example, one could consider
(self-avoiding) paths with unit north, east and south steps only.  In
contrast to general self-avoiding walks, many variations of this
model are now well understood.  In particular, the problem of
enumerating such paths confined to a prescribed subset of
$\integers^2$, such as above the main diagonal or inside a wedge,
has been successfully tackled.  Another important variation of the
problem was initiated by Michael Fisher~\cite{Fisher}, who considered
families of non-intersecting paths with unit north and east steps,
so-called \lq watermelons\rq, to model phase transitions in wetting
and melting processes.  It is then also of interest to refine the
enumeration by taking into account the number of contacts of the path
with the boundary.

We do not have to resort to physics to find applications of lattice
paths, especially when allowing only unit north and east steps.  In
particular, questions involving Dyck paths are ubiquitous in
algebraic combinatorics.  As a striking example, let us mention that the
bigraded Hilbert series of the space of alternating diagonal
harmonics is given by the $q,t$-Catalan polynomials~\cite{MR2371044}.  
Alternatively, these can be described as the
generating polynomials for two natural statistics on Dyck paths, \lq
area\rq\ and \lq bounce\rq, or equivalently, \lq dinv\rq\ and \lq area\rq.
Although it is possible to show that the
polynomials are symmetric in $q$ and $t$, no bijective proof of this
innocent-looking fact is known.
The number of contacts with the boundary also plays a role in this setting.
Jim Haglund \cite[p. 50]{MR2371044} gives a bijection $\zeta$ on Dyck paths, 
which had been used in a different context by George Andrews {\it et al.}~\cite{MR1926854},
sending the pair $(\mathrm{area},\mathrm{dinv})$ to the pair $(\mathrm{bounce},\mathrm{area})$.
As pointed out by Christian Stump, the bijection $\zeta$ also sends the statistic `number of returns'
(i.e., contacts with the diagonal)
to the length of the initial rise (equivalently, height of the first peak, or contacts with the left boundary).
More generally, Nick Loehr's extension~\cite{MR2134172} of this map to $m$-Dyck paths has the analogous property.

In fact, it was shown by Emeric Deutsch~\cite{Deutsch1998, Deutsch1999} that the joint distribution of the pair of statistics
`number of returns' and `height of the first peak' is symmetric on
Dyck paths, by exhibiting a recursively defined involution. 
A non-recursive description of the same map has been given by Mireille Bousquet-M\'elou, \'Eric Fusy, and Louis-Fran\c{c}ois Pr\'eville-Ratelle~\cite{BousquetMelou}.

Our contribution to the study of directed lattice paths confined to a region is twofold.  
On the one hand, in Section~\ref{sec:bottom-top}
we generalize and refine the above theorem of Deutsch. We show that the
restriction to Dyck paths is not necessary: there are natural
generalizations of the statistics to paths in an arbitrary region
where the symmetry still holds.  Moreover, it is possible to extend the
result to paths taking east, north and south steps.

Focusing on paths with east and north steps only, in
Section~\ref{sec:bottom-left-top-right} we give a different
generalization of the two statistics considered by Deutsch to two
pairs of statistics which are equidistributed over the set of paths
in a region.  In Section~\ref{sec:path-tuple} we generalize both
results to $k$-tuples of non-intersecting paths.  Finally, in
Section~\ref{sec:applications} we investigate some consequences of
these theorems to Dyck paths, watermelon configurations, and
restricted permutations.  We also establish a link between our map
and the covering relation in the generalized Tamari lattices
recently introduced by Fran\c{c}ois Bergeron~\cite{Bergeron2011}.

On the other hand, in Section~\ref{sec:SSYT} we provide a new
connection between two seemingly separate topics in algebraic
combinatorics: we exhibit a natural weight-preserving bijection
between $k$-flagged semistandard Young tableaux of shape $\lambda$,
as appearing when studying Schubert polynomials~\cite{Wachs1985}, and
families of $k$ non-intersecting paths confined to a region of the
same shape $\lambda$.  We use this bijection to establish a
refinement of a conjecture on multi-triangulations of polygons by
Carlos Nicol\'as~\cite{Nicolas2009}, concerning the distribution of
degrees of $k$ consecutive vertices.

\section{Statement of main results}
\label{sec:statement}

Let $T$ and $B$ be two lattice paths in $\naturals^2$ with north
steps ($N=(0,1)$) and east steps ($E=(1,0)$) from the origin to some
prescribed point $\F\in\naturals^2$ such that $T$ is weakly above
$B$, i.e., the $n$-th east step of $T$ is weakly above the $n$-th
east step of $B$ for $1\leq n\leq x$.  Let $\P(T, B)$ be the set of
lattice paths with north and east steps from the origin to $\F$ that
lie between $T$ and $B$, i.e., weakly above $B$ and weakly below $T$.
Thus, the paths $T$ and $B$ are the \Dfn{upper} and \Dfn{lower}
boundaries of the paths in $\P(T, B)$.

In this paper we show that several natural statistics on lattice paths
in $\P(T, B)$ have a symmetric distribution.  Formally, a
\Dfn{statistic} on a set $\mathcal{O}$ of objects is simply a function from
$\mathcal{O}$ to $\naturals$.  Two $k$-tuples of statistics
$(f_1,f_2,\dots,f_k)$ and $(g_1,g_2,\dots,g_k)$ have the same
\Dfn{joint distribution} over $\mathcal{O}$, denoted
$$
(f_1,f_2,\dots,f_k)\sim(g_1,g_2,\dots,g_k),
$$
if
$$
\sum_{P\in\mathcal{O}} x_1^{f_1(P)}\dots x_k^{f_k(P)} = \sum_{P\in\mathcal{O}} x_1^{g_1(P)}\dots x_k^{g_k(P)}.
$$
The distribution of $(f_1,f_2,\dots,f_k)$ is \Dfn{symmetric} over
$\mathcal{O}$ if
$$
(f_1,f_2,\dots,f_k)\sim(f_{\pi(1)},f_{\pi(2)},\dots,f_{\pi(k)})
$$
for every permutation $\pi$ of $[k]=\{1,2,\dots,k\}$.

We consider statistics counting
the following special steps of paths in $\P(T,B)$:
\begin{itemize}
\item a \Dfn{top contact} is an east step that is also a step of $T$,
\item a \Dfn{bottom contact} is an east step that is also a step of $B$,
\item a \Dfn{left contact} is a north step that is also a step of $T$,
\item a \Dfn{right contact} is a north step that is also a step of $B$.
\end{itemize}
We denote the number of top, bottom, left and right contacts of
$P\in\P(T,B)$ by $t(P)$, $b(P)$, $\ell(P)$ and $r(P)$, respectively.  An example
is given in Figure~\ref{fig:paths}.

\begin{figure}[htb]
  \begin{center}
\begin{tikzpicture}[scale=0.65]
 \draw (0,0) circle(1.2pt) \N\N\E\N\E\E\N\E\N\E\N\E\N\E\E\E\E;
 \draw (0,0) circle(1.2pt) \E\E\E\N\E\N\E\E\N\E\N\N\E\E\N\E\N;
 \draw[very thick] (0,0) \e\n\n\n\e\e\e\n\n\e\e\e\e\e\n\n\e;
 \draw[very thick,dotted,red] (1,3) -- (3,3);
 \draw[very thick,dotted,red] (4,5) -- (5,5);
 \draw[very thick,dotted,red] (9,7) -- (10,7);
 \draw[very thick,dotted,blue] (0,0) -- (1,0);
 \draw[very thick,dotted,blue] (7,5) -- (9,5);
 \draw[very thick,dotted,brown] (1,2) -- (1,3);
 \draw[very thick,dotted,brown] (4,4) -- (4,5);
 \draw[very thick,dotted,cyan] (9,5) -- (9,6);
 \draw (1.5,1) node {$P$};
 \draw (5.1,6.5) node {$T$};
 \draw (5,1.5) node {$B$};
 \draw (-0.6,0) node {$O$};
 \draw (10.5,7.1) node {$F$};
\end{tikzpicture}
  \end{center}
  \caption{A path $P\in\P(T,B)$ with  $t(P)=4$, $b(P)=3$, $\ell(P)=2$,
    and $r(P)=1$.}\label{fig:paths}
\end{figure}

In the next two sections we give bijective proofs of the following results.
\begin{theorem}\label{thm:bottom-top}
  The distribution of the pair $(t, b)$ over $\P(T, B)$ is symmetric.
\end{theorem}

\begin{theorem}\label{thm:bottom-left-top-right}
  The pairs $(b, \ell)$ and $(t, r)$ have the same joint distribution over
  $\P(T, B)$.
\end{theorem}

As an example, if $T=NNENEE$ and $B=ENEENN$, then
$$\sum_{P\in\P(T,B)}x^{t(P)}y^{b(P)}=x^3+x^2y+xy^2+y^3+2x^2+2xy+2y^2+2x+2y+1
=\sum_{P\in\P(T,B)}x^{b(P)}y^{t(P)}$$
and
$$\sum_{P\in\P(T,B)}x^{b(P)}y^{\ell(P)}=x^3+x^2y+y^3+2x^2+3xy+3y^2+2x+2y=\sum_{P\in\P(T,B)}x^{t(P)}y^{r(P)}.$$

Although the two theorems look very similar, we do not know
of a uniform proof for them. Let us point out that it is not true that
$(t,b,\ell)\sim(b,t,r)$ in general.  In Section~\ref{sec:bottom-top}
we exhibit an involution proving a refined and generalized
version of Theorem~\ref{thm:bottom-top}. The refinement consists of
keeping track of the sequence of $y$-coordinates of the east steps
that are not contacts, while the generalization consists of allowing
the paths to have south steps at prescribed $x$-coordinates.  The
analogous refinement and generalization of
Theorem~\ref{thm:bottom-left-top-right} does not hold,
and in fact our proof of Theorem~\ref{thm:bottom-top}, given
in Section~\ref{sec:bottom-left-top-right}, is 
very different from that of Theorem~\ref{thm:bottom-left-top-right}.
Namely, we show that both
$$
\sum_{P\in\P(T,B)} x^{b(P)}y^{\ell(P)}
\quad \mathrm{and} \quad
\sum_{P\in\P(T,B)} x^{t(P)}y^{r(P)}
$$
can be interpreted as the Tutte polynomial of the lattice
path matroid associated with $\P(T, B)$, as defined in~\cite{BoninDeMierNoy2003}. To do so, we use the
definition of the Tutte polynomial in terms of \Dfn{activities}, which
relies on a linear ordering on the ground set of the matroid.  The
independence of the Tutte polynomial of this ordering then implies
Theorem~\ref{thm:bottom-left-top-right}.  In fact, as pointed out to us by
Olivier Bernardi, William Tutte's
proof~\cite{MR0061366} of the well-definedness of his dichromate
(essentially the Tutte polynomial restricted to graphs) is almost
bijective.  We make this explicit in Appendix~\ref{sec:Tutte-independence}.

In Section~\ref{sec:path-tuple}, Theorems~\ref{thm:bottom-top} and~\ref{thm:bottom-left-top-right} are generalized to $k$-tuples of non-crossing
paths. Let $\P^k(T, B)$ be the set of $k$-tuples $\Mat P=(P_1,P_2,\dots,P_k)$ such that
$P_i\in\P(T,B)$ for all $i$, and $P_i$ is weakly above $P_{i+1}$ for $1\leq i\leq k-1$.
Let $P_0 = T$ and $P_{k+1} = B$. For $0\leq i\leq k$, denote by $h_i=h_i(\Mat P)$
the number of east steps where $P_i$ and $P_{i+1}$ coincide.
We provide a bijective proof of a generalization of
Theorem~\ref{thm:bottom-top}, which can be stated in a simplified
form as follows.
\begin{theorem}\label{thm:bottom-top-k}
  The distribution of $(h_0,h_1,\dots,h_k)$ over $\P^k(T, B)$ is symmetric.
\end{theorem}
To generalize Theorem~\ref{thm:bottom-left-top-right}, define the
\Dfn{top contacts of $\Mat P$} to be the top contacts of $P_1$, and
denote their number by $t(\Mat P)$.  Similarly, let
$\ell(\Mat P)$ be the number of left contacts of $P_1$, and let
$b(\Mat P)$ (respectively $r(\Mat P)$) be the number of bottom
(respectively right) contacts of $P_k$.
Note that $t(\Mat P)= h_0(\Mat P)$ and $b(\Mat P)=h_k(\Mat P)$ by definition.

\begin{theorem}\label{thm:bottom-left-top-right-k}
  The pairs $(b, \ell)$ and $(t, r)$ have the same joint distribution over
  $\P^k(T, B)$.
\end{theorem}

The above theorems have consequences for the enumeration of generalized Dyck paths with a given number of top and bottom contacts, as well as to the distribution of statistics on restricted permutations.
These are discussed in Section~\ref{sec:applications}, together with a new proof of a result of Richard Brak and John Essam~\cite{BrakEssam2001} on watermelon configurations.

In the special case that $T=N^{n-2k-1} E^{n-2k-1}$ and $B=(E N)^{n-2k-1}$, the elements of
$\P^k(T, B)$ are $k$-fans of Dyck paths of semilength $n-2k$. We denote this set by $\D_n^k$.
The number of such $k$-fans was shown by Jakob Jonsson~\cite{Jonsson2005}
to be equal to the number of $k$-triangulations of a convex $n$-gon,
that is, maximal sets of diagonals such that no $k+1$ of them cross
mutually.  Nicol\'as~\cite{Nicolas2009} made the following conjecture, based on experimental evidence.

\begin{cnj}[\cite{Nicolas2009}]\label{conj:carlos}
The distribution of degrees of $k$ consecutive vertices over the set of $k$-triangulations of a convex $n$-gon equals the distribution of the tuple $(h_0,h_1,\dots,h_{k-1})$ over $\D_n^k$.
\end{cnj}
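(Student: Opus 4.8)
The plan is to establish Conjecture~\ref{conj:carlos} by constructing a bijection between $k$-triangulations of a convex $n$-gon and $k$-fans of Dyck paths in $\D_n^k$ that carries the degree sequence of $k$ consecutive vertices to the tuple $(h_0,h_1,\dots,h_{k-1})$. I will not attempt to build such a bijection directly. Instead I would route through $k$-flagged semistandard Young tableaux, which are the natural intermediate object: first recall (or reprove) the classical bijection between $k$-triangulations of the $n$-gon and certain families of $k$ non-crossing monotone lattice paths coming from Jonsson's work and the Lindstr\"om--Gessel--Viennot method, and identify precisely which combinatorial data on the triangulation side corresponds to the relevant path statistics. The key new ingredient, to be proved in Section~\ref{sec:SSYT}, is a weight-preserving bijection $\bij$ between $k$-flagged SSYT of a given shape $\lambda$ and $k$-tuples of non-intersecting paths confined to the region of shape $\lambda$; I would use the case where $\lambda$ is the staircase shape so that the path side becomes $\D_n^k$.

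The steps, in order, would be: (1) translate the degree of a vertex $v$ in a $k$-triangulation into a count of certain $k$-relevant diagonals or $k$-stars incident to $v$, following Pilaud--Santos/Jonsson, so that the joint distribution of degrees of the $k$ consecutive vertices $v_1,\dots,v_k$ becomes a tuple of statistics on the associated object; (2) push this through the triangulation-to-tableaux correspondence to reinterpret the degree sequence as a natural statistic on $k$-flagged SSYT — concretely, something like the number of entries equal to a prescribed value in each of the $k$ rows, or the shape of a vertical strip; (3) apply the bijection $\bij$ from Section~\ref{sec:SSYT} to transport this tableau statistic to the statistic $(h_0,h_1,\dots,h_{k-1})$ on $\D_n^k$, checking that $\bij$ sends the "number of entries equal to $j$ in row $i$" data exactly to the "number of shared east steps between $P_i$ and $P_{i+1}$" data; (4) conclude that the two distributions coincide.

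The main obstacle I anticipate is step~(2) together with the precise matching in step~(3): one must verify that the somewhat rigid notion of degree in a $k$-triangulation — which is sensitive to the cyclic structure of the polygon and to the behavior of the $k$ boundary edges at the chosen vertices — corresponds, under the chain of bijections, to exactly the flagging conditions imposed on the SSYT, and then that those flagging conditions are exactly what $\bij$ converts into the contact counts $h_i$. There is real bookkeeping here about off-by-one shifts between "vertex $v_i$", "row $i$ of the tableau", and "boundary path $P_i$ versus $P_{i+1}$", and about whether degrees are measured including or excluding polygon edges. Once the dictionary is pinned down, the symmetry statement of Theorem~\ref{thm:bottom-top-k} is not even needed for the conjecture itself — the conjecture only asserts equality of two distributions, not their symmetry — so the entire content is the existence and statistic-compatibility of the bijection $\bij$, which is why I would invest essentially all the effort in Section~\ref{sec:SSYT} and treat the present proof as an assembly of that bijection with the known triangulation--path correspondence.
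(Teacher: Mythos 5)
Your plan is essentially the paper's proof: compose the Serrano--Stump bijection (Theorem~\ref{thm:Serrano-Stump}), which sends $k$-triangulations with degree sequence $(d_1,\dots,d_{n-k-1})$ to $k$-flagged SSYT of staircase shape whose weight records the degrees, with the new weight-preserving bijection $\bij$ of Theorem~\ref{thm:carlos}, which sends such tableaux to tuples in $\D_n^k$ with the weight matching $(\lambda_1-h_0,\dots,\lambda_1-h_k,u_1,\dots)$; the paper packages this composition as Theorem~\ref{thm:degrees}. You correctly identify that essentially all the work lies in constructing $\bij$, and your only inaccuracy is a minor one in step~(2): the tableau statistic is the global weight $\mu_i$ (total number of entries equal to $i$), not a per-row count or a vertical-strip shape, and the matching is $\mu_i=\lambda_1-h_{i-1}$ for $1\le i\le k+1$.
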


One of the main results of Section~\ref{sec:SSYT} is a proof of this conjecture. In fact, we show that it can be extended to give a description of the distribution of the full degree sequence of the $k$-triangulation.
One ingredient in our proof is a bijection of Serrano and Stump~\cite{SerranoStump2010} between $k$-triangulations of a convex $n$-gon and $\D_n^k$. The other ingredient, whose proof occupies most of Section~\ref{sec:SSYT}, can be stated in a simplified form as follows.

\begin{theorem}
  Let $T=N^y E^x$, and let $B$ be any path from the origin to $(x,y)$, weakly below $T$ and ending with a north step.
  Let $\lambda$ be the Young diagram bounded by $T$ and $B$. There is an explicit bijection between
  $k$-tuples of paths $\Mat P\in\P^k(T, B)$ and $k$-flagged
  SSYT of shape $\lambda$ such that the number of entries equal to $i+1$ in the tableau is $\lambda_1-h_i(\Mat P)$, for $0\le i\le k$.
\end{theorem}

\section{The symmetry $(t,b)\sim(b,t)$ for a single path}
\label{sec:bottom-top}
In this section we construct an involution $\swapall$ that
proves a generalized version of Theorem~\ref{thm:bottom-top}. It not only applies to
a more general set of paths, but it also gives a refined result by preserving the sequence of
$y$-coordinates of the east steps that are not contacts.

Let $\PP(T, B)$ be the set of lattice paths from the origin to $\F$ with north, east
and south ($S=(0,-1)$) steps, lying weakly below $T$ and weakly above $B$.
Given such a lattice path $P$, the \Dfn{descent set} of $P$ is the
set of $x$-coordinates where south steps occur.  For a fixed subset $\Set D\subset\naturals$, denote by $\PP(T, B, \Set D)$ the set of
paths $P\in\PP(T, B)$ having descent set $\Set D$. Note that $\PP(T, B, \emptyset)=\P(T,B)$ by definition.
The definitions of top and bottom contacts generalize trivially to paths in $\PP(T, B)$. An example is given in Figure~\ref{fig:pathwithS}.

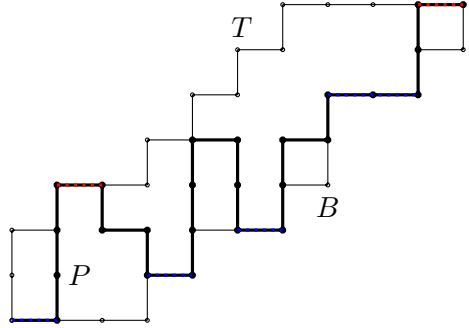
\begin{figure}[htb]
  \begin{center}
\begin{tikzpicture}[scale=0.6]
 \draw (0,0) circle(1.2pt) \N\N\E\N\E\E\N\E\N\E\N\E\N\E\E\E\E;
 \draw (0,0) circle(1.2pt) \E\E\E\N\E\N\E\E\N\E\N\N\E\E\N\E\N;
 \draw[very thick] (0,0) \e\n\n\n\e\s\e\s\e\n\n\n\e\s\s\e\n\n\e\n\e\e\n\n\e;
 \draw[very thick,dotted,red] (1,3) -- (2,3);
 \draw[very thick,dotted,red] (9,7) -- (10,7);
 \draw[very thick,dotted,blue] (0,0) -- (1,0);
 \draw[very thick,dotted,blue] (3,1) -- (4,1);
 \draw[very thick,dotted,blue] (5,2) -- (6,2);
 \draw[very thick,dotted,blue] (7,5) -- (9,5);
 \draw (1.5,1) node {$P$};
 \draw (5.1,6.5) node {$T$};
 \draw (7,2.5) node {$B$};
\end{tikzpicture}
  \end{center}
  \caption{A path $P\in\PP(T,B)$ with  $t(P)=2$, $b(P)=5$, and descent set $\{2,3,5\}$.}\label{fig:pathwithS}
\end{figure}

For a given sequence $\Mat H$ of integers,
let $\PP(T, B, \Set D, \Mat H)$ (respectively $\P(T,B,\Mat H)$) be the subset of
$\PP(T, B, \Set D)$ (respectively $\P(T,B)$) containing those paths whose sequence of
$y$-coordinates of the east steps that are not bottom or top contacts
equals $\Mat H$. Figure~\ref{fig:complete-example} shows all the paths in the set $\PP(T, B, \Set D, \Mat H)$ with  $T=NNNEEENEE$, $B=EENEEENNN$, $\Set D=\{2\}$, and $\Mat H= 2\,2\,3$.

We can now state the announced refinement of
Theorem~\ref{thm:bottom-top}.
\begin{theorem}\label{thm:bottom-top-refined}
  For any set $\Set D$ and any sequence $\Mat H$ of integers,  the
  distribution of $(t, b)$ over $\PP(T, B, \Set D, \Mat H)$ is
  symmetric.
\end{theorem}
\begin{remark}
  Without the refinement by $\Mat H$, a recursive, non-bijective proof of this result has been found independently by Guo Niu Han~\cite{Han}.
\end{remark}
In the following we encode a path in $\PP(T,B)$ by the sequence of
$y$-coordinates of its east steps, except that we record top contacts using $\t$'s and bottom
contacts using $\b$'s. Steps that are simultaneously top and bottom contacts are
not recorded in this encoding at all. For example, the first path in Figure~\ref{fig:complete-example} is encoded by $2\t23\t$.

To prove Theorem~\ref{thm:bottom-top-refined}, we construct an involution $\swapall$ that essentially
turns top contacts into bottom contacts one at a time. The transformation that turns a top contact into a bottom contact, which we denote $\swap$,
relies in turn on a transformation on words, denoted $\switch$, which turns a sequence of $e$ $\t$'s and $f$ $\b$'s into a sequence of $e-1$ $\t$'s and $f+1$ $\b$'s.

\subsection{A transformation on words}

Let us first describe the map $\switch$, which is defined on words over the alphabet $\{\t,\b\}$.
We say that such a word $w_1w_2\dots w_{2n}$ of even length is a Dyck word
if it contains the same number of $\t$'s and $\b$'s, and in every prefix $w_1w_2\dots w_i$ with $1\le i\le 2n$, the
number of $\b$'s never exceeds the number of $\t$'s.

  Let $\Mat w=w_1w_2\dots w_{e+f}$ be a word over the alphabet $\{\t,\b\}$.
  Any such $\Mat w$ can be factorized uniquely as
  \begin{equation}\label{factorization}
  \Mat w = D_1 \b D_2 \b \dots \b D_j \t D_{j+1} \t D_{j+2} \t \dots \t D_m,
  \end{equation}
  where each $D_i$ for $1\le i\le m$ is a (possibly empty) Dyck
  word. In such a factorization, the letters $\t$ and $\b$ which
  are not part of a Dyck word are called \Dfn{unmatched}
  letters. By construction, all the unmatched $\b$'s are to the left of the unmatched $\t$'s.
  Suppose that there is at least one unmatched $\t$.
  We define $\switch(\Mat w)$ to be the word obtained from $\Mat w$ by
  replacing the leftmost unmatched $\t$ with a $\b$, that is,
  \begin{equation}\label{factorization_switch}
  \switch(\Mat w) = D_1 \b D_2 \b \dots \b D_j \b D_{j+1} \t D_{j+2} \t \dots \t D_m.
  \end{equation}

The above factorization can be visualized by representing a word with
a path, drawing an northeast step $(1,1)$ for each letter $\t$ and a southeast step $(1,-1)$ for
each letter $\b$.  In Figure~\ref{fig:switch}, the effect of
the map $\switch$ applied to the word
$\b\t\t\b\t\b\b\b\t\t\b\t\t\b\t\b\t\t$ is shown.
In this example, the Dyck words $D_2$, $D_4$ and $D_5$, indicated by the dotted areas, are non-empty.

\begin{figure}[htb]
  \begin{center}
    \begin{tikzpicture}[scale=0.4]
      \draw (0,5) coordinate(d0)
      -- ++(1,-1) coordinate(d1)
      -- ++(1,+1) coordinate(d2)
      -- ++(1,+1) coordinate(d3)
      -- ++(1,-1) coordinate(d4)
      -- ++(1,+1) coordinate(d5)
      -- ++(1,-1) coordinate(d6)
      -- ++(1,-1) coordinate(d7)
      -- ++(1,-1) coordinate(d8)
      -- ++(1,+1) coordinate(d9) 
      -- ++(1,+1) coordinate(d10)
      -- ++(1,-1) coordinate(d11)
      -- ++(1,+1) coordinate(d12)
      -- ++(1,+1) coordinate(d13)
      -- ++(1,-1) coordinate(d14)
      -- ++(1,+1) coordinate(d15)
      -- ++(1,-1) coordinate(d16)
      -- ++(1,+1) coordinate(d17)
      -- ++(1,+1) coordinate(d18);
      \foreach \x in {0,...,18} {
        \draw (d\x) circle (1.5pt);
      }
      \fill[pattern=dots] (d1) \foreach \x in {2,...,7} { -- (d\x) };
      \fill[pattern=dots] (d9) \foreach \x in {10,11} { -- (d\x) };
      \fill[pattern=dots] (d12) \foreach \x in {13,...,16} { -- (d\x) };
      \draw[red, very thick] (d8)--(d9);
      \draw (d9) ++(0,-2) node [label=$\switch$,rotate=-90] {$\mapsto$};
      \draw (0,0) coordinate(d0)
      -- ++(1,-1) coordinate(d1)
      -- ++(1,+1) coordinate(d2)
      -- ++(1,+1) coordinate(d3)
      -- ++(1,-1) coordinate(d4)
      -- ++(1,+1) coordinate(d5)
      -- ++(1,-1) coordinate(d6)
      -- ++(1,-1) coordinate(d7)
      -- ++(1,-1) coordinate(d8)
      -- ++(1,-1) coordinate(d9) 
      -- ++(1,+1) coordinate(d10)
      -- ++(1,-1) coordinate(d11)
      -- ++(1,+1) coordinate(d12)
      -- ++(1,+1) coordinate(d13)
      -- ++(1,-1) coordinate(d14)
      -- ++(1,+1) coordinate(d15)
      -- ++(1,-1) coordinate(d16)
      -- ++(1,+1) coordinate(d17)
      -- ++(1,+1) coordinate(d18);
      \foreach \x in {0,...,18} {
        \draw (d\x) circle (1.5pt);
      }
      \fill[pattern=dots] (d1) \foreach \x in {2,...,7} { -- (d\x) };
      \fill[pattern=dots] (d9) \foreach \x in {10,11} { -- (d\x) };
      \fill[pattern=dots] (d12) \foreach \x in {13,...,16} { -- (d\x) };
      \draw[blue, very thick] (d8)--(d9);
    \end{tikzpicture}
  \end{center}
  \caption{A visual description of the map $\switch$.}\label{fig:switch}
\end{figure}

\begin{remark}
The map $\switch$, in different contexts, belongs to mathematical folklore. Curtis Greene and Daniel Kleitman~\cite{GreeneKleitman} used
it to build a symmetric chain decomposition of the boolean algebra, giving an
injection from $i$-element subsets of $[n]$ to $(i+1)$-element subsets of $[n]$ (where $i<n/2$) with the property that
each subset is contained in its image. Their construction proves that the boolean algebra has the Sperner property.
It also yields a bijection between $i$-element subsets and $(n-i)$-element subsets of $[n]$ where each subset is contained in its image.
On the other hand, a bijective argument based on iterates of $\switch$ proves that the number of ballot paths of length $2n$ is $\binom{2n}{n}$.
\end{remark}

\begin{lemma}\label{lem:switch1}
  Let $e,f,u$ be nonnegative integers with $u\ge\max\{e-f,f-e+2\}$.
The map $\switch$ is a bijection between
  \begin{enumerate}
  \item the set of words with $e$ $\t$'s and $f$ $\b$'s having $u$ unmatched letters, and
  \item the set of words with $e-1$ $\t$'s and $f+1$ $\b$'s having $u$ unmatched letters.
  \end{enumerate}
\end{lemma}

\begin{proof}
  Let $\Mat w$ be a word with $e$ $\t$'s and $f$ $\b$'s having
  $u$ unmatched letters.  Note that $\Mat w$ has some unmatched
  $\t$, since otherwise we would have $u=f-e < f-e+2$,
  contradicting the assumption on $u$.
  If the factorization of $\Mat w$ is given by~\eqref{factorization},
  then the factorization of $\switch(\Mat w)$ is given by~\eqref{factorization_switch}.
  In particular, $\switch(\Mat w)$ is a word with $e-1$ $\t$'s and
  $f+1$ $\b$'s having the same number of unmatched letters as $\Mat
  w$.  The map is injective because $\Mat w$ can be recovered from
  $\switch(\Mat w)$ by replacing the rightmost unmatched $\b$ with a
  $\t$, and it is surjective because any word with $e-1$ $\t$'s and $f+1$ $\b$'s
  having $u$ unmatched letters must have some
  unmatched $\b$, since otherwise we would have $u=(e-1)-(f+1) < e-f$,
  contradicting the assumption on $u$.
\end{proof}

The next lemma states that the left-most unmatched $\t$ in $\Mat w$
can only be preceded by a $\b$ and followed by a $\t$ in both $\Mat w$ and $\switch(\Mat w)$.

\begin{lemma}\label{lem:switch}
  Let $\Mat w=w_1w_2\dots w_{e+f}$ be a word with $e$ $\t$'s and $f$ $\b$'s having some unmatched $\t$.
  Let $\switch(\Mat w)=w'_1 w'_2\dots w'_{e+f}$, and let $i$ be such that $w_i = \t$ and $w'_i = \b$.
  If $i>1$, then $w_{i-1} = w'_{i-1} =\b$, and if $i<e+f$, then $w_{i+1} =w'_{i+1} = \t$.
\end{lemma}
\begin{proof}
This follows trivially from the definition of $\switch$.
\end{proof}

\subsection{The maps $\swap$ and $\swapall$}

Our next goal is to translate $\switch$, which is a map on words, into a transformation on paths, denoted $\swap$. Then, $\swapall$ will be constructed by iterating $\swap$.

\begin{definition}
  For $P\in\PP(T, B)$, the \Dfn{sequence of contacts} of $P$ is the word $\Mat w_P$ over $\{\t,\b\}$ obtained by recording the top and bottom
  contacts of $P$ from left to right, except for the steps that are simultaneously top and a bottom contacts, which are not recorded.
\end{definition}

\begin{definition}\label{def:swap}
  Let $P\in\PP(T, B)$ be such that $\Mat w_P$ contains some unmatched $\t$.
  The east steps of $P$ can be decomposed uniquely as $P=W X \t Y Z$, where
  \begin{itemize}
  \item the selected $\t$ is the leftmost unmatched $\t$ in $\Mat w_P$,
  \item $X$ is maximal such that there is no descent after
    any of its steps and no (right) endpoint of any of its
    steps lies on $B$, and
  \item $Y$ is maximal such that there is a descent before
    each of its steps.
  \end{itemize}
  Let $h_X$ (respectively $h_Y$) be $-\infty$ if $X$ (respectively $Y$) is empty,
  and otherwise the $y$-coordinate of its last (respectively first)
  east step.  Define
  $$
  \swap(P)=
  \begin{cases}
    W X Y \b Z & \text{if $h_X\le h_Y$,} \\ W
    \b X Y Z & \text{if $h_X> h_Y$.}
  \end{cases}
  $$
\end{definition}

\begin{remark}
  In the case of paths with no descents, the definition of $\swap$
  is simpler: writing $P$ as $P=W X\t Z$, where $X$ is maximal not
  touching $B$, we have $\swap(P)=W\b XZ$.
\end{remark}

\begin{figure}[hbt]
  \begin{center}
    \def\drawbrace#1#2#3{ 
      \draw[thick, decoration={brace, mirror, raise=0.3cm}, decorate]
      (#1) -- (#2)
      node [pos=0.5,anchor=north,yshift=-0.4cm] {#3};
    }
    \begin{tikzpicture}[scale=0.5]
      \def\R#1{
        \coordinate (origin) at #1;
        \draw (origin) circle(1.2pt) \E\E\E\N\E\E\E\E\N\E\N\N\N\N;
        \draw (origin) circle(1.2pt) \N\N\N\N\E\E\N\E\E\E\E\N\E\E;
      }
      \R{(0,9)}
      \draw[very thick] (origin)
      -- ++(0, 2) -- ++(1, 0) 
      -- ++(0,-1) -- ++(1, 0) 
      -- ++(0, 1) -- ++(1, 0) 
      -- ++(0, 1) -- ++(1, 0) 
      -- ++(0, 2) -- ++(1, 0) 
      -- ++(0,-2) -- ++(1, 0) 
      -- ++(0,-1) -- ++(1, 0) 
      -- ++(0, 3) -- ++(1, 0) 
      -- ++(0, 1);
      \path (origin) node(x0) {}
      ++(1,0) node(x1) {}
      ++(3,0) node(x2) {}
      ++(1,0) node(x3) {}
      ++(2,0) node(x4) {}
      ++(1,0) node(x5) {};
      \drawbrace{x0}{x1}{$W$}
      \drawbrace{x1}{x2}{$X$}
      \drawbrace{x2}{x3}{$\t$}
      \drawbrace{x3}{x4}{$Y$}
      \drawbrace{x4}{x5}{$Z$}
      \draw[very thick,dotted,red] (origin) ++(4,5) -- ++(1,0);
      \draw[very thick,green] (origin) ++(5,3) -- ++(1,0)-- ++(0,-1)-- ++(1,0);
      \draw (origin) ++(9.5,3) node {$\stackrel{\swap}{\mapsto}$};
      \R{(11,9)}
      \draw[very thick] (origin)
      -- ++(0, 2) -- ++(1, 0)
      -- ++(0,-1) -- ++(1, 0)
      -- ++(0, 1) -- ++(1, 0)
      -- ++(0, 1) -- ++(1, 0)
      -- ++(0, 0) -- ++(1, 0)
      -- ++(0,-1) -- ++(1, 0)
      -- ++(0,-1) -- ++(1, 0)
      -- ++(0, 4) -- ++(1, 0)
      -- ++(0, 1);
      \path (origin) node(x0) {}
      ++(1,0) node(x1) {}
      ++(3,0) node(x2) {}
      ++(2,0) node(x3) {}
      ++(1,0) node(x4) {}
      ++(1,0) node(x5) {};
      \drawbrace{x0}{x1}{$W$}
      \drawbrace{x1}{x2}{$X$}
      \drawbrace{x2}{x3}{$Y$}
      \drawbrace{x3}{x4}{$\b$}
      \drawbrace{x4}{x5}{$Z$}
      \draw[very thick,dotted,blue] (origin) ++(6,1) -- ++(1,0);
      \draw[very thick,green] (origin) ++(4,3) -- ++(1,0)-- ++(0,-1)-- ++(1,0);
      \R{(0,0)}
      \draw[very thick] (origin)
      -- ++(0, 2) -- ++(1, 0)
      -- ++(0,-1) -- ++(1, 0)
      -- ++(0, 1) -- ++(1, 0)
      -- ++(0, 2) -- ++(1, 0)
      -- ++(0, 1) -- ++(1, 0)
      -- ++(0,-2) -- ++(1, 0)
      -- ++(0,-1) -- ++(1, 0)
      -- ++(0, 3) -- ++(1, 0)
      -- ++(0, 1);
      \path (origin) node(x0) {}
      ++(1,0) node(x1) {}
      ++(3,0) node(x2) {}
      ++(1,0) node(x3) {}
      ++(2,0) node(x4) {}
      ++(1,0) node(x5) {};
      \drawbrace{x0}{x1}{$W$}
      \drawbrace{x1}{x2}{$X$}
      \drawbrace{x2}{x3}{$\t$}
      \drawbrace{x3}{x4}{$Y$}
      \drawbrace{x4}{x5}{$Z$}
      \draw[very thick,dotted,red] (origin) ++(4,5) -- ++(1,0);
      \draw[very thick,green] (origin) ++(1,1) -- ++(1,0)-- ++(0,1)-- ++(1,0)-- ++(0,2)-- ++(1,0);
      \draw (origin) ++(9.5,3) node {$\stackrel{\swap}{\mapsto}$};
      \R{(11,0)}
      \draw[very thick] (origin)
      -- ++(0, 2) -- ++(1, 0)
      -- ++(0,-2) -- ++(1, 0)
      -- ++(0, 1) -- ++(1, 0)
      -- ++(0, 1) -- ++(1, 0)
      -- ++(0, 2) -- ++(1, 0)
      -- ++(0,-1) -- ++(1, 0)
      -- ++(0,-1) -- ++(1, 0)
      -- ++(0, 3) -- ++(1, 0)
      -- ++(0, 1);
      \path (origin) node(x0) {}
      ++(1,0) node(x1) {}
      ++(1,0) node(x2) {}
      ++(3,0) node(x3) {}
      ++(2,0) node(x4) {}
      ++(1,0) node(x5) {};
      \drawbrace{x0}{x1}{$W$}
      \drawbrace{x1}{x2}{$\b$}
      \drawbrace{x2}{x3}{$X$}
      \drawbrace{x3}{x4}{$Y$}
      \drawbrace{x4}{x5}{$Z$}
      \draw[very thick,dotted,blue] (origin) ++(1,0) -- ++(1,0);
      \draw[very thick,green] (origin) ++(2,1) -- ++(1,0)-- ++(0,1)-- ++(1,0)-- ++(0,2)-- ++(1,0);
    \end{tikzpicture}
  \end{center}
  \caption{Two examples of the map $\swap$ for paths with one contact: one
    where $h_X\le h_Y$ (top) and one where $h_X> h_Y$ (bottom).}\label{fig:swapI}
\end{figure}

Examples of the map $\swap$ for paths with $t(P)=1$ and $b(P)=0$ are given in Figure~\ref{fig:swapI}.
Before showing that $\swap$ is a bijection between the appropriate sets,
let us remark that, on paths with $t(P)=1$ and $b(P)=0$,
its definition is forced by the requirement that the sequence
of $y$-coordinates of east steps of $P$ that are not contacts is preserved.

\begin{prop}\label{prop:unique-path}
  There is at most one path $P_1$ in $\PP(T,B, \Set D, \Mat H)$ with $t(P_1)=1$ and $b(P_1)=0$,
  and at most one path $P_2$ with $t(P_2)=0$ and $b(P_2)=1$.
\end{prop}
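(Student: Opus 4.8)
The plan is to show that a path with the prescribed contact counts, if one exists, is uniquely determined by $T$, $B$, $\Set D$ and $\Mat H$. I will treat the case $t(P_1)=1$, $b(P_1)=0$ and obtain the case $t(P_2)=0$, $b(P_2)=1$ from the symmetry interchanging $T$ and $B$.

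First I would dispose of a degenerate situation. If $T$ and $B$ share an east step, say their $m$-th ones (necessarily at the same height), then the $m$-th east step of any path $P\in\PP(T,B)$ has its height squeezed between the two equal heights of that step on $B$ and on $T$, hence is forced and is simultaneously a top and a bottom contact; so there is no path with $b=0$, and none with $t=0$, and the statement holds vacuously. Assume from now on that $T$ and $B$ have no common east step; then no east step of a path in $\PP(T,B)$ is both a top and a bottom contact, so a path $P$ with $t(P)=1$, $b(P)=0$ has exactly one distinguished east step, a top contact. Consequently the encoding of $P$ introduced above is obtained from $\Mat H$ by inserting a single letter $\t$ at some position $k\in\{1,\dots,x\}$, where $x$ is the number of east steps; since this $\t$ is the $k$-th east step of $P$ and lies on $T$, it must be the $k$-th east step of $T$, so the heights of the east steps of $P$ are given by $c_j=H_j$ for $j<k$, $c_k=h_k$, and $c_j=H_{j-1}$ for $j>k$, where $h_j$ (resp.\ $b_j$) denotes the height of the $j$-th east step of $T$ (resp.\ $B$). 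As $P$ is recovered from its encoding, it remains to show that at most one value of $k$ yields a path whose descent set is $\Set D$. Here I would use, setting $c_0=0$ and $c_{x+1}=y$, that the descent set is $\{j:c_j>c_{j+1}\}$ and that a non-contact east step $j$ of a path in $\PP(T,B)$ satisfies $b_j<c_j<h_j$.

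The heart of the matter is a telescoping argument. Suppose $P$ and $P'$ are two such paths, with insertion positions $k<k'$ and height sequences $c$ and $c'$; these agree outside $\{k,\dots,k'\}$, and on that block $c$ reads $h_k,H_k,H_{k+1},\dots,H_{k'-1}$ while $c'$ reads $H_k,H_{k+1},\dots,H_{k'-1},h_{k'}$. Because the $k$-th east step of $P'$ is a non-contact, $H_k=c'_k<h_k$; hence $c_k=h_k>H_k=c_{k+1}$, so $P$ has a descent at position $k$. Equality of descent sets then forces a descent of $P'$ at $k$, which reads $H_k>H_{k+1}$, and feeding this back into $c$ gives $c_{k+1}=H_k>H_{k+1}=c_{k+2}$, a descent of $P$ at $k+1$. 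Iterating, one shows by induction that $P$ has a descent at every position $j$ with $k\le j\le k'-1$; applied at $j=k'-1$ this gives a descent of $P'$ there, i.e.\ $H_{k'-1}=c'_{k'-1}>c'_{k'}=h_{k'}$, contradicting the fact that the $k'$-th east step of $P$, of height $H_{k'-1}=c_{k'}$, is a non-contact and so lies strictly below $h_{k'}$. Hence $k=k'$ and $P=P'$. For $P_2$ the same argument runs with $T$ and $B$ exchanged: the lone contact now sits at the floor $b_k$, one tracks ascents in place of descents, and ``$j\in\Set D$'' is replaced by ``$j\notin\Set D$''.

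I expect the telescoping step to be the only genuine difficulty: the reduction to a single inserted symbol and the reconstruction of $P$ from its encoding are pure bookkeeping, but one has to notice that it is precisely the descent set which rigidifies the position of the single contact, and to arrange the induction so that each strict descent of one path forces the next strict descent of the other all the way across the interval $[k,k']$, where the ceiling constraint $c_j\le h_j$ at the two endpoint non-contact steps finally produces the contradiction.
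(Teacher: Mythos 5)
Your proof is correct, and it is in essence the same argument as the paper's, just run in the opposite direction. The paper also reduces to comparing two hypothetical paths whose encodings differ only in the position of the single $\t$, and then propagates a (non-)descent condition across the interval between the two candidate positions to reach a contradiction at the far end. The paper starts at the \emph{later} contact, invokes ``there can be no descent immediately before a top contact,'' and pushes non-descents leftward to obtain $v_1\le v_2\le\cdots\le v_m$, whereupon the $T$-height constraint at the left end gives the contradiction; you instead start at the \emph{earlier} contact, note that the $T$-height at column $k$ strictly exceeds the non-contact height $H_k$, and push descents rightward to the position $k'-1$, obtaining the contradiction from the $T$-height constraint at the right end. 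These are dual phrasings of one telescoping induction, so I would not call it a genuinely different route. Your preliminary disposal of the degenerate case (a shared east step of $T$ and $B$) is harmless but not needed, since the hypothesis $t(P_1)=1$, $b(P_1)=0$ already precludes any step from being simultaneously a top and a bottom contact, which is all the decomposition requires. The closing remark about the second half is slightly imprecise as stated --- what propagates under the $T\leftrightarrow B$ symmetry is ``$j\notin\Set D$'' itself (i.e.\ non-descents, $c_j\le c_{j+1}$), not strict ascents --- but the strict inequality $b_k<H_k$ at the starting column and the strict inequality $H_{k'-1}>b_{k'}$ at the ending column make the argument go through exactly as in the first half.
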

\begin{proof}
  We prove the first part of the statement, the second part then follows by symmetry.
  Suppose that $P,P'\in\PP(T,B, \Set D, \Mat H)$ are two paths with $t(P)=t(P')=1$ and $b(P)=b(P')=0$.
  Since both paths have the same sequence of $y$-coordinates of non-contact east steps, we can write
  the paths as $P = U V \t W$ and $P' = U \t V W$.  It suffices to show that $V$ must be empty.

  Suppose that $V$ is not empty, and that the sequence of
  $y$-coordinates of its east steps is $v_1,v_2,\dots,v_m$.
  There cannot be a descent immediately before a top contact, in
  particular not between the last step of $V$ and $\t$ in $P$. Since the descent
  sets of $P$ and $P'$ are the same, there is no descent at this position in $P'$ either, so
  $v_{m-1}\le v_m$. But then there is no descent between the last two steps of $V$ in $P$,
  so the same is true at the corresponding position in $P'$, which implies $v_{m-2}\le v_{m-1}$.
  Repeating this argument, we obtain that $v_1\le v_2$, i.e., there is no descent between the first two steps of $V$ in $P$,
  so there is no descent between $\t$ and the first step of $V$ in $P'$. However, this is impossible: the top contact in $P'$ coincides with an east step of $T$ whose
  $y$-coordinate is strictly greater that $v_1$, since the first step of $V$ in $P$ is not a top contact.
\end{proof}

\begin{lemma}\label{lem:nocontacts}
In the decomposition $P=W X \t Y Z$ in Definition~\ref{def:swap}, neither $X$ nor $Y$ contains any top or bottom contacts.
In particular, the sequence of contacts of $\swap(P)$ is $\switch(\Mat w_P)$.
\end{lemma}

\begin{proof}
By Lemma~\ref{lem:switch}, the contact preceding the selected $\t$ must be a bottom
contact. Since $X$ contains no bottom contacts, it contains no top
contacts either.

Similarly, the contact following the selected $\t$ must be a top contact.
Since $Y$ contains no top contacts, it contains no bottom contacts either.
\end{proof}

\begin{lemma}\label{lem:swap1}
  Let $e,f,u$ be nonnegative integers with $u\ge\max\{e-f,f-e+2\}$.
  The map $\swap$ is a bijection between \begin{enumerate}
  \item the set of paths in $\PP(T, B, \Set D, \Mat H)$ whose sequence of contacts has $e$ $\t$'s, $f$ $\b$'s, and $u$ unmatched letters, and
  \item the set of paths in $\PP(T, B, \Set D, \Mat H)$ whose sequence of contacts has $e-1$ $\t$'s, $f+1$ $\b$'s, and $u$ unmatched letters.
\end{enumerate}
\end{lemma}
\begin{proof}
  Let $P$ be a path in the set described in~(i).  As in the proof of
 Lemma~\ref{lem:switch1}, $P$ must have some unmatched $\t$, so
 $\switch$ is applicable.  Moreover, the sequence of contacts of
 $\swap(P)$ is $\switch(\Mat w_P)$, which has $e-1$ $\t$'s, $f+1$
 $\b$'s, and $u$ unmatched letters.  It is clear from the
 definition that $\swap(P)\in\PP(T,B, \Mat H)$.

Let us now check that $P$ and $\swap(P)$ have the same descent set, so $\swap(P)\in\PP(T, B, \Set D, \Mat H)$.
Suppose that $P = W X \t Y Z$ is the decomposition in Definition~\ref{def:swap}.
Consider first the case $h_X\leq h_Y$, so $\swap(P) = W X Y \b Z$, that is, the block $\t Y$ in $P$ becomes $Y\b$ in $\swap(P)$.
By the choice of $Y$ and because there cannot be a descent just before a top contact, it is clear that $P$ has no descent just before
and just after the block $\t Y$, and there are descents at all positions inside the block. Let us check that this is also the case for the block $Y\b$ in $\swap(P)$.
\begin{itemize}
  \item Just before $Y\b$: if $X$ is non-empty, then $h_Y \geq h_X > -\infty$, which implies that
    $Y$ is non-empty and there is no descent between $X$ and $Y$; if $X$ is empty, then either $W$ is empty or its last
    east step has its right endpoint on $B$, so there is no descent just before $Y\b$ either.
  \item Just after $Y\b$: there cannot be a descent just after a bottom contact.
  \item Inside $Y\b$: by the definition of $Y$, there are descents at
    all positions inside $Y$; at the position between $Y$ and $\b$ (if $Y$ is non-empty), there is a descent
  because the last east step of $Y$ was not a bottom contact in $P$, by Lemma~\ref{lem:nocontacts}, so its $y$-coordinate is strictly larger than that of $\b$ in $\swap(P)$.
\end{itemize}
The arguments for the case that $h_X >  h_Y$ and $\swap(P) = W \b X Y Z$ are very similar and thus omitted.

  To show that $\swap$ is invertible we will exhibit its inverse. Informally, the description of $\swap^{-1}$ is obtained from that of $\swap$ by rotating the picture by $180$ degrees.
  Explicitly, let $P'$ be a path in the set described in~(ii).
  The proof of Lemma~\ref{lem:switch1} shows that $\Mat w_{P'}$ has some unmatched $\b$.
  The east steps of $P'$ can be decomposed uniquely as $P'=R S \b U V$, where
  \begin{itemize}
  \item the selected $\b$ is the rightmost unmatched $\b$ in $\Mat w_{P'}$,
  \item $S$ is maximal such that there is a descent after
    each of its steps, and
  \item $U$ is maximal such that there is no descent before
    any of its steps and no (left) endpoint of any of its east
    steps lies on $T$.
  \end{itemize}
  Let $h_S$ ($h_U$) be $+\infty$ if $S$ (respectively $U$) is empty,
  and otherwise the $y$-coordinate of its last (respectively first)
  east step. Define
  $$
  \swap^{-1}(P)=
  \begin{cases}
    R \t S U V & \text{if $h_S\le h_U$,} \\
    R S U \t V & \text{if $h_S> h_U$.}
  \end{cases}
  $$
  Let us check that $\swap^{-1}$ is indeed the inverse of $\swap$. Suppose that $P = W X \t Y Z$ and $h_X\leq h_Y$, and let $P'=\swap(P) = W X Y \b Z$.
  By Lemma~\ref{lem:nocontacts}, $\Mat w_{P'}=\switch(\Mat w_P)$, and so, as in the proof of Lemma~\ref{lem:switch1}, the leftmost unmatched $\t$ of $\Mat w_P$ becomes the
  rightmost unmatched $\b$ of $\Mat w_{P'}$. Additionally, when applying $\swap^{-1}$ to $P'$, the decomposition $P'=R S \b U V$
  has $S=Y$, by definition of $S$ and the fact that there is no descent just before $Y$ but there are descents in all the positions inside $Y\b$. Additionally, $h_S\le h_U$ because there was no descent just after $Y$ in $P$.
  Thus, $\swap^{-1}(P')=R \t S U V=P$. The case that $h_X>h_Y$ is similar.
\end{proof}

Figure~\ref{fig:complete-example} shows examples of the map $\swap$.

\begin{lemma}\label{lem:swapseq}
  Let $e>f$. For $0\le i\le e-f$, let $\R_i$ be the set of paths in
  $\PP(T, B, \Set D, \Mat H)$ whose sequence of contacts has $e-i$ $\t$'s,
  $f+i$ $\b$'s, and at least $e-f$ unmatched letters.
  Specifically, $\R_0$ is the set of all paths in $\PP(T, B, \Set
  D, \Mat H)$ having $e$ top contacts and $f$ bottom contacts, and
  $\R_{e-f}$ is the set of all paths in $\PP(T, B, \Set D, \Mat H)$
  having $f$ top contacts and $e$ bottom contacts (steps that are simultaneously top and a bottom contacts are disregarded there).
  Then the map $\swap$ produces a sequence of bijections
  $$\R_0\overset{\swap}\rightarrow\R_1\overset{\swap}\rightarrow\cdots\overset{\swap}\rightarrow \R_{e-f}.$$
\end{lemma}

\begin{proof}
To prove that $\R_0$ and $\R_{e-f}$ are indeed as claimed,
note that every word with $e$ $\t$'s and $f$ $\b$'s
has at least $e-f$ unmatched $\t$'s, and every word with $f$
$\t$'s and $e$ $\b$'s has at least $e-f$ unmatched $\b$'s.

By Lemma~\ref{lem:swap1}, $\swap:\R_i\rightarrow \R_{i+1}$ is a bijection for $0\le i<e-f$, since in this case $e-f\ge\max\{e-f-2i,f-e+2i+2\}$.
\end{proof}

We can now describe the bijection $\swapall$ that proves Theorem~\ref{thm:bottom-top-refined}, which in turn generalizes Theorem~\ref{thm:bottom-top}.
Figure~\ref{fig:complete-example} gives two examples of the map $\swapall$.

\begin{definition}\label{def:swapall}
For $P\in\PP(T,B)$, define $\swapall(P)=\swap^{e-f}(P)$, where $e=t(P)$  and $f=b(P)$.
\end{definition}

\begin{lemma}\label{lem:swapall}
The map $\swapall$ is an involution on $\PP(T,B)$ that preserves the descent set, as well as the sequence of
$y$-coordinates of the east steps that are not contacts, and satisfies
$t(\swapall(P))=b(P)$ and $b(\swapall(P))=t(P)$.
\end{lemma}

\begin{proof}
For any fixed $\Set D$, $\Mat H$, and $e>f$, Lemma~\ref{lem:swapseq} states that $\swap^{e-f}$ is a bijection between $\{P\in\PP(T, B, \Set D, \Mat H): t(P)=e,\ b(P)=f\}$ and $\{P\in\PP(T, B, \Set D, \Mat H): t(P)=f,\ b(P)=e\}$,
with inverse $(\swap^{-1})^{e-f}=\swap^{f-e}$. It follows that $\swapall$ is a bijection with the stated properties.
To see that it is an involution, note that for a path $P\in\PP(T,B)$ with $e$ top contacts and $f$ bottom contacts, $\swapall(\swapall(P))=\swap^{f-e}(\swap^{e-f}(P))=P$.
\end{proof}

\begin{figure}[htb]
  \begin{center}
    \begin{tikzpicture}[scale=0.6]
      \def\R#1{
        \coordinate (origin) at #1;
        \draw (origin) circle(1.2pt) \E\E\N\E\E\E\N\N\N;
        \draw (origin) circle(1.2pt) \N\N\N\E\E\E\N\E\E;
      }
      \R{(0,6)}
      \draw[very thick] (origin)
      -- ++(0, 2) -- ++(1, 0)
      -- ++(0, 1) -- ++(1, 0)
      -- ++(0,-1) -- ++(1, 0)
      -- ++(0, 1) -- ++(1, 0)
      -- ++(0, 1) -- ++(1, 0);
      \draw[very thick,dotted,red] (origin) ++(1,3) -- ++(1,0);
      \draw[very thick,dotted,red] (origin) ++(4,4) -- ++(1,0);
      \draw (origin) ++(6,2) node {$\stackrel{\swap}{\mapsto}$};
      \R{(7,6)}
      \draw[very thick] (origin)
      -- ++(0, 2) -- ++(1, 0)
      -- ++(0, 0) -- ++(1, 0)
      -- ++(0,-1) -- ++(1, 0)
      -- ++(0, 2) -- ++(1, 0)
      -- ++(0, 1) -- ++(1, 0);
      \draw[very thick,dotted,blue] (origin) ++(2,1) -- ++(1,0);
      \draw[very thick,dotted,red] (origin) ++(4,4) -- ++(1,0);
      \draw (origin) ++(6,2) node {$\stackrel{\swap}{\mapsto}$};
      \R{(14,6)}
      \draw[very thick] (origin)
      -- ++(0, 2) -- ++(1, 0)
      -- ++(0, 0) -- ++(1, 0)
      -- ++(0,-1) -- ++(1, 0)
      -- ++(0, 0) -- ++(1, 0)
      -- ++(0, 2) -- ++(1, 0)
      -- ++(0, 1);
      \draw[very thick,dotted,blue] (origin) ++(2,1) -- ++(2,0);
      \R{(0,0)}
      \draw[very thick] (origin)
      -- ++(0, 3) -- ++(1, 0)
      -- ++(0, 0) -- ++(1, 0)
      -- ++(0,-1) -- ++(1, 0)
      -- ++(0, 0) -- ++(1, 0)
      -- ++(0, 1) -- ++(1, 0)
      -- ++(0, 1);
      \draw[very thick,dotted,red] (origin) ++(0,3) -- ++(2,0);
      \draw (origin) ++(6,2) node {$\stackrel{\swap}{\mapsto}$};
      \R{(7,0)}
      \draw[very thick] (origin)
      -- ++(0, 0) -- ++(1, 0)
      -- ++(0, 3) -- ++(1, 0)
      -- ++(0,-1) -- ++(1, 0)
      -- ++(0, 0) -- ++(1, 0)
      -- ++(0, 1) -- ++(1, 0)
      -- ++(0, 1);
      \draw[very thick,dotted,blue] (origin)  -- ++(1,0);
      \draw[very thick,dotted,red] (origin) ++(1,3) -- ++(1,0);
      \draw (origin) ++(6,2) node {$\stackrel{\swap}{\mapsto}$};
      \R{(14,0)}
      \draw [very thick](origin)
      -- ++(0, 0) -- ++(1, 0)
      -- ++(0, 2) -- ++(1, 0)
      -- ++(0,-1) -- ++(1, 0)
      -- ++(0, 1) -- ++(1, 0)
      -- ++(0, 1) -- ++(1, 0)
      -- ++(0, 1);
      \draw[very thick,dotted,blue] (origin)  -- ++(1,0);
      \draw[very thick,dotted,blue] (origin) ++(2,1) -- ++(1,0);
    \end{tikzpicture}
  \end{center}
  \caption{The involution $\swapall$ applied to two paths with two top contacts and no bottom contacts.}
  \label{fig:complete-example}
\end{figure}

\section{The symmetry $(b,\ell)\sim(t,r)$ for a single path}
\label{sec:bottom-left-top-right}
In this section we prove Theorem~\ref{thm:bottom-left-top-right}.
Although this theorem looks superficially similar to
Theorem~\ref{thm:bottom-top}, we have not found a comparable
\lq natural\rq\ bijective proof.  Instead, our theorem
below is a consequence of work of Anna de Mier, Joseph Bonin and Marc Noy~\cite{BoninDeMierNoy2003}, and also
Federico Ardila~\cite{Ardila2003}.

Again, let $T$ and $B$ be lattice paths in $\naturals^2$ with
north and east steps from the origin to $\F$ such that $T$ is
weakly above $B$. The paths in this section have no south steps.
We encode a path $P\in\P(T, B)$ as the subset $\nP$ of
$\Set N=\{1,2,\dots,x+y\}$ given by the indices of the north steps in
$P$.
For example, the path $P$ in Figure~\ref{fig:paths} is specified by
the subset $\nP=\{2,3,4,8,9,15,16\}\subseteq[17]$. It is shown in~\cite{BoninDeMierNoy2003}
that the set $\B=\{\nP:P\in\P(T,B)\}$ is the set of bases of a matroid with \Dfn{ground set} $\Set N$.
This matroid, which we denote by $\LPM$, is called a \Dfn{lattice path matroid}.

  Let $\prec$ be an arbitrary linear order on $\Set N$, and let
  $\nP\in\B$.  Then an element $e\not\in\nP$ is
  \Dfn{externally active} with respect to $(\nP, \prec)$ if
  $$\nexists\,n\in\nP\text{ such that } n\prec e \text{ and } \nP\setminus\{n\}\cup\{e\} \in\B,$$
  that is, the east step with index $e$ cannot be switched with a ``smaller" north step to produce another path in $\P(T,B)$.
  Similarly, an element $n\in\nP$ is \Dfn{internally active} with
  respect to $(\nP, \prec)$ if
  $$\nexists\,e\in \Set N\setminus \nP\text{ such that } e\prec n \text{ and } \nP\setminus\{n\}\cup\{e\}\in\B,$$
  that is, the north step with index $n$ cannot be switched with a ``smaller" east step to produce another path in $\P(T,B)$.

  For a fixed order $\prec$, the \Dfn{internal activity} of $\nP$ is the number of
  internally active elements with respect to $(\nP,\prec)$, and similarly its \Dfn{external activity} is the number
  of externally active elements.  The \Dfn{Tutte polynomial} of $\LPM$, introduced by Henry
  Crapo~\cite{Crapo1969} generalizing Tutte's dichromate for graphs, is the generating polynomial for the internal and external
  activities of its bases:
  \begin{equation}\label{eq:activity}
    \sum_{\nP\in\B} x^\text{internal activity of $\nP$}
                      y^\text{external activity of $\nP$}.
  \end{equation}

The main ingredient in the proof of
Theorem~\ref{thm:bottom-left-top-right}, restated below for convenience, is the fact that the Tutte polynomial is well-defined, i.e., independent of the ordering of the ground set.  In
Appendix~\ref{sec:Tutte-independence} we give an activity-preserving
bijection on the bases of a matroid relative to two orderings that differ only in the order of two covering elements.
This bijection works for any matroid, and in particular for $\LPM$, so it gives a bijective proof of Theorem~\ref{thm:bottom-left-top-right-restated},
but it is iterative and rather tedious to apply. It would be interesting to find a direct bijective proof of this theorem.

\begin{theorem}\label{thm:bottom-left-top-right-restated}
  The pairs $(b, \ell)$ and $(t, r)$ have the same joint distribution over $\P(T, B)$.
\end{theorem}
\begin{proof}
  Let $\prec$ be the usual order $1\prec 2\prec 3\prec\cdots$ of the
  ground set $\Set N$ of the matroid $\LPM$ described above. As shown in~\cite[Theorem 5.4]{BoninDeMierNoy2003}, an internally active element of
  $\nP\in\B$ with respect to this order is a left
  contact of $P\in\P(T,B)$, and an externally active element of $\nP$ is a bottom contact of $P$. By equation~\eqref{eq:activity}, the Tutte polynomial of $\LPM$ equals $\sum_{P\in\P(T,B)} x^{b(P)}y^{\ell(P)}$.

  Since the Tutte polynomial is independent of the ordering on the
  ground set, the same polynomial can also be obtained as follows.
  Let now $\prec$ be the order $\cdots\prec 3\prec 2\prec 1$. With respect to this order,
  we claim that an internally active element of $\nP\in\B$ is a right contact of $P\in\P(T,B)$, and an externally
  active element of $\nP$ is a top contact of $P$. From the claim it follows that the Tutte polynomial of $\LPM$ also equals $\sum_{P\in\P(T,B)} x^{t(P)}y^{r(P)}$, and thus
  $$\sum_{P\in\P(T,B)} x^{b(P)}y^{\ell(P)}=\sum_{P\in\P(T,B)} x^{t(P)}y^{r(P)}$$ as desired.

  It remains to prove the claim, which can be done with the following argument analogous to~\cite[Theorem 5.4]{BoninDeMierNoy2003}.
  Let $P\in\P(T, B)$, and suppose that $n\in \nP$, that is, the $n$-th step of $P$ is a north step.
  If this step is a right contact, then for any $e$ satisfying that
  $e\notin\nP$ (i.e., the $e$-th step of $P$ is an east step) and $e\prec n$ (i.e., $e>n$), the path whose north steps are $\nP\setminus\{n\}\cup\{e\}$ does not lie
  weakly above $B$, since the $n$-th step of this path goes under $B$, so $\nP\setminus\{n\}\cup\{e\}\notin\B$. Thus $n$ is internally active.

  Conversely, if the $n$-th step of $P$ is not a right contact, let
  $e$ be the index of the first east step with $e>n$ that touches $B$ with its right
  endpoint.  Then $e\notin\nP$, $e\prec n$, and the path whose north steps are $\nP\setminus\{n\}\cup\{e\}$ belongs to $\P(T,B)$, so $\nP\setminus\{n\}\cup\{e\}\in\B$. Thus $n$ is not internally active.

  The argument for externally active edges is very similar and thus omitted.
\end{proof}

\section{A $k$-tuple of paths between two boundaries}
\label{sec:path-tuple}

In this section we show how to extend Theorems~\ref{thm:bottom-top} and~\ref{thm:bottom-left-top-right} to families of $k$
non-crossing paths.  In both cases, the idea is to
repeatedly apply the theorems for single paths.

\subsection{The symmetry $(t,b)\sim(b,t)$}
\label{sec:bottom-top-k}

In our extension of Theorem~\ref{thm:bottom-top} to $k$-tuples of paths, we do not allow paths with south steps, unlike in the more general Theorem~\ref{thm:bottom-top-refined}.
The reason is that the distribution of bottom and top contacts over $\P(T, B)$ is not symmetric if we allow south steps in the boundary paths $T$ and $B$.
However, we are able to partially incorporate the refinement keeping track of the $y$-coordinates of the non-contact east steps.

Recall from Section~\ref{sec:statement} that, given
$\Mat P=(P_1,P_2,\dots,P_k)\in\P^k(T, B)$, with the convention that $P_0 = T$
and $P_{k+1} = B$, we denote by $h_i=h_i(\Mat P)$ the number of east steps where $P_i$ and $P_{i+1}$ coincide, for $0\le i\le k$.
Recall that $T$ and $B$ are paths from $(0,0)$ to $(x,y)$.
For $1\le s\le y-1$, let $u_s=u_s(\Mat P)$ be the number of east steps with $y$-coordinate $y-s$
that lie strictly between $T$ and $B$ and are not used by any of the paths $P_1,\dots,P_{k}$.
Let $\Mat u(\Mat P)=(u_1,u_2,\dots)$.
For example, the $k$-tuple in Figure~\ref{fig:k-tuple} has $(h_0,h_1,h_2)=(4,4,6)$ and $(u_1,\dots,u_6)=(3,0,1,2,3,2)$.
For a fixed sequence $\Mat u$ of nonnegative integers,
let $\P^k(T,B, \Mat u)$ be the set of $k$-tuples $\Mat P\in\P^k(T, B)$ with $\Mat u(\Mat P)=\Mat u$.

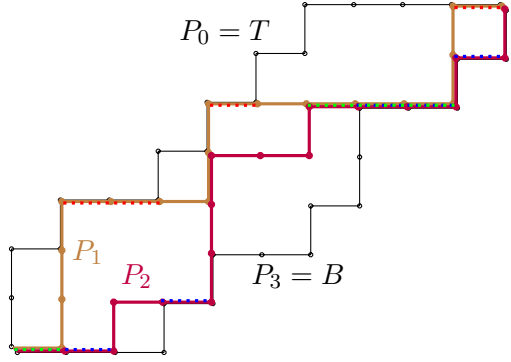
\begin{figure}[htb]
  \begin{center}
\begin{tikzpicture}[scale=0.65]
 \draw (-0.06,0.06) circle(1.2pt) \N\N\E\N\E\E\N\E\N\E\N\E\N\E\E\E\E;
 \draw (0.06,-0.06) circle(1.2pt) \E\E\E\N\E\N\E\E\N\E\N\N\E\E\N\E\N;
 \draw[very thick,brown] (-0.03,0.03) \e\n\n\n\e\e\e\n\n\e\e\e\e\e\n\n\e;
 \draw[very thick,purple] (0.03,-0.03) \e\e\n\e\e\n\n\n\e\e\n\e\e\e\n\e\n;
 \draw[very thick,dotted,red] (1,3) -- (3,3);
 \draw[very thick,dotted,red] (4,5) -- (5,5);
 \draw[very thick,dotted,red] (9,7) -- (10,7);
 \draw[very thick,dotted,blue] (0,0) -- (2,0);
 \draw[very thick,dotted,blue] (3,1) -- (4,1);
 \draw[very thick,dotted,blue] (7,5) -- (9,5);
 \draw[very thick,dotted,blue] (9,6) -- (10,6);
 \draw[very thick,dotted,green] (0,0) -- (1,0);
 \draw[very thick,dotted,green] (6,5) -- (9,5);
 \draw[brown] (1.5,2) node {$P_1$};
 \draw[purple] (2.5,1.5) node {$P_2$};
 \draw (4.3,6.5) node {$P_0=T$};
 \draw (5.8,1.5) node {$P_3=B$};
\end{tikzpicture}
  \end{center}
  \caption{A tuple of paths in $\P^2(T, B)$.}
  \label{fig:k-tuple}
\end{figure}

\begin{theorem}\label{thm:bottom-top-k-refined}
  For any sequence $\Mat u$ of nonnegative integers, the distribution of $(h_0,h_1,\dots,h_k)$ over $\P^k(T,B, \Mat u)$ is symmetric.
\end{theorem}
\begin{remark}
  Disregarding the refinement by $\Mat u$, two recursive, non-bijective proofs of this theorem have been given by
  Nicolas~\cite[Theorem~3]{Nicolas2009} and Han~\cite{Han}. To the extent of our knowledge, ours is the first bijective proof.
\end{remark}
\begin{proof}
  It suffices to show that
  $(h_0,\dots,h_{i-1},h_{i},\dots,h_k)\sim(h_0,\dots,h_{i},h_{i-1},\dots,h_k)$ over $\P^k(T,B, \Mat u)$ for any $i$ with $1\le i\le k$.
  Fix such an $i$, and let $\Mat P=(P_1,P_2,\dots,P_k)\in\P^k(T,B, \Mat u)$. Regarding $P_i$ as
  a path in $\P(P_{i-1},P_{i+1})$, we can apply the bijection $\swapall$ from
  Definition~\ref{def:swapall} to it, obtaining a path $\swapall(P_i)\in \P(P_{i-1},P_{i+1})$. Let $\Mat Q=(Q_1,Q_2,\dots,Q_k)$, where $Q_i = \swapall(P_i)$ and
  $Q_j=P_j$ for $j\neq i$.

  By definition, $h_j(\Mat P)=h_j(\Mat Q)$ for $j\notin\{i-1,i\}$. By Lemma~\ref{lem:swapall}, the number of east steps where  $P_i$ and $P_{i+1}$ (respectively $P_{i-1}$) coincide equals the number of east steps where $\swapall(P_i)$ and $P_{i-1}$ (respectively $P_{i+1}$) coincide,
  so $h_i(\Mat P)=h_{i-1}(\Mat Q)$ and $h_{i-1}(\Mat P)=h_{i}(\Mat Q)$.

  It remains to show that $\Mat u(\Mat P)=\Mat u(\Mat Q)$. The only east steps that need to be checked are those that lie strictly between $P_{i-1}=Q_{i-1}$ and $P_{i+1}=Q_{i+1}$.
  We know by Lemma~\ref{lem:swapall} applied to $P_i\in\P(P_{i-1},P_{i+1})$ that the
  multiset of $y$-coordinates of east steps of $P_i$ that do not coincide with either $P_{i-1}$ or $P_{i+1}$ equals the corresponding multiset of $Q_i$. Thus, the number of unused east steps
  lying between $P_{i-1}$ and $P_{i+1}$ at each fixed $y$-coordinate is the same in both $\Mat P$ and $\Mat Q$.
\end{proof}

\subsection{The symmetry $(b,\ell)\sim(t,r)$}
\label{sec:bottom-left-top-right-k}

Given $\Mat P=(P_1,P_2,\dots,P_k)\in\P^k(T, B)$, letting $P_0 = T$ and $P_{k+1} = B$
we denote by $v_i=v_i(\Mat P)$ the number of north steps where $P_i$ and $P_{i+1}$ coincide
and, as before, by $h_i=h_i(\Mat P)$ the number of east steps where $P_i$ and $P_{i+1}$ coincide, for $0\le i\le k$.
Note that $t(\Mat P) = h_0(\Mat P)$, $b(\Mat P) = h_k(\Mat P)$,
$\ell(\Mat P) = v_0(\Mat P)$ and $r(\Mat P) = v_k(\Mat P)$.

\begin{theorem}\label{thm:bottom-left-top-right-k-restated}
  The pairs $(b, \ell)$ and $(t, r)$ have the same joint distribution over
  $\P^k(T, B)$.
\end{theorem}
\begin{proof}
  As in the proof of Theorem~\ref{thm:bottom-top-k-refined},
  each $P_i$, for $1\le i\le k$, can be regarded as a path in $\P(P_{i-1},P_{i+1})$.
  In this setting, the statistics involved in the statement of Theorem~\ref{thm:bottom-left-top-right-restated} are
  $b(P_i)=h_{i}(\Mat P)$, $\ell(P_i)=v_{i-1}(\Mat P)$, $t(P_i)=h_{i-1}(\Mat P)$ and $r(P_i)=v_{i}(\Mat P)$.
  Applying Theorem~\ref{thm:bottom-left-top-right-restated} to $P_i\in\P(P_{i-1},P_{i+1})$,
  it follows that there is a bijection between tuples $\Mat P\in\P^k(T, B)$ with $h_{i}(\Mat P) = e$ and $v_{i-1}(\Mat P) = f$,
  and tuples $\Mat P\in\P^k(T, B)$ with $h_{i-1}(\Mat P) = e$ and $v_{i}(\Mat P) = f$, which preserves the statistics $h_j$ and $v_j$ for all $j\notin\{i-1,i\}$.

  Given $\Mat P\in\P^k(T, B)$ with $b(\Mat P)=h_k(\Mat P) = e$ and $\ell(\Mat P)=v_0(\Mat P) = f$, one can apply
  this bijection first to $P_k$, then to $P_{k-1}$ in the resulting tuple, and successively up to $P_1$. This composition gives a bijection between
  tuples $\Mat P\in\P^k(T, B)$ with $b(\Mat P)= e$ and $\ell(\Mat P)= f$, and tuples $\Mat P\in\P^k(T, B)$
  with $t(\Mat P)=h_0(\Mat P) = e$ and $v_{1}(\Mat P) = f$.
  On the latter set, one can now apply the bijection to $P_{2}$, then to $P_{3}$, and successively
  down to $P_k$, proving that tuples $\Mat P\in\P^k(T, B)$ with $t(\Mat P)= e$ and $v_{1}(\Mat P) = f$ are in turn in bijection with
  tuples $\Mat P\in\P^k(T, B)$ with $t(\Mat P)=e$ and $r(\Mat P)=v_k(\Mat P)=f$.
\end{proof}

\section{Corollaries and Applications}\label{sec:applications}
\subsection{Dyck paths and generalizations}\label{sec:Dyck}
In the particular case that $T=N^n E^n$ and $B=(EN)^n$, the
statistics $t$ and $b$ become two familiar Dyck path statistics: the
height of the last peak and the number of returns, respectively.  A
bijective proof of the fact that these statistics are equidistributed
on Dyck paths was given by Deutsch~\cite{Deutsch1998}, who later also
exhibited a recursively defined involution~\cite{Deutsch1999} proving the symmetry of their
joint distribution. The equidistribution result is also a consequence of the more recent bijection $\zeta$ due to Haglund \cite[p. 50]{MR2371044} and Andrews {\it et al.}~\cite{MR1926854}.

Deutsch's involution has recently been rediscovered by Bousquet-M\'elou, Fusy and Pr\'eville-Ratelle~\cite{BousquetMelou}, who consider the bijection between binary trees and Dyck paths obtained by reading the tree in postorder and recording an $N$ for each leaf (except the first one) and an $E$ for each internal node. As they point out, the involution on binary trees produced by reflecting along a vertical axes translates via this bijection into an involution on Dyck paths that switches the statistics $t$ and $b$. It is not hard to show that this operation coincides with Deutsch's involution (up to a minor modification in order to deal with the height of the last peak rather than the first), and in fact it provides a non-recursive description of it.

The symmetry of the statistics `height of the last peak' and `number of returns' on Dyck paths
can also be proved using standard generating function techniques, based on the usual recursive decomposition of Dyck paths.
However, neither these techniques nor the above bijections seem to
extend to the general setting of Theorem~\ref{thm:bottom-top}.
Our involution $\swapall$, when restricted to the
case of Dyck paths, is quite different from Deutsch's involution
and Haglund's bijection.  In addition to providing an extension to paths between arbitrary
boundaries $T$ and $B$, our involution can also be used to prove the following.

\begin{cor}\label{cor:i+j}
Let $T$ and $B$ be arbitrary paths with $N$ and $E$ steps from
$(0,0)$ to $(x,y)$ and $T$ weakly above $B$.
The following statements are equivalent:
  \begin{enumerate}
  \item the number of paths in $\P(T,B)$ with $i$ top and $j$ bottom
    contacts depends only on $i+j$;
  \item for every path in $\P(T,B)$, all its
    bottom contacts occur before its top contacts;
  \item the last east step of $B$ is lower than the first east step
    of $T$.
  \end{enumerate}
Similarly, the following statements are equivalent:
  \begin{enumerate} \renewcommand{\labelenumi}{(\roman{enumi}')}
  \item if $\P(T,B,\Mat H)\neq\emptyset$ and $i+j+\size{\Mat H}=x$, then $\P(T,B,\Mat H)$ contains precisely one path with $i$ top and $j$ bottom contacts;
  \item for every path in $\P(T,B,\Mat H)$, all its bottom contacts occur before its top contacts.
  \end{enumerate}
\end{cor}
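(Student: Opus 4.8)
The plan is to prove the first chain of equivalences in the cyclic order $(iii)\Rightarrow(ii)\Rightarrow(i)\Rightarrow(iii)$, and the second as $(i')\Leftrightarrow(ii')$. Throughout I would encode a path $P\in\P(T,B)$ (or $P\in\P(T,B,\Mat H)$) by the weakly increasing sequence $p_0\le p_1\le\dots\le p_{x-1}$ of $y$-coordinates of its east steps, and write $\tau_0\le\dots\le\tau_{x-1}$ and $\beta_0\le\dots\le\beta_{x-1}$ for the analogous sequences of $T$ and $B$. Then $P\in\P(T,B)$ iff $\beta_c\le p_c\le\tau_c$ for all $c$; the $c$-th east step of $P$ is a top contact iff $p_c=\tau_c$ and a bottom contact iff $p_c=\beta_c$; a non-contact east step satisfies $\beta_c<p_c<\tau_c$; and condition $(iii)$ says exactly $\tau_0>\beta_{x-1}$ (since $h_T=\tau_0$ and $h_B=\beta_{x-1}$). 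The equivalence $(ii)\Leftrightarrow(iii)$ is then immediate: $(ii)$ fails iff some $P$ has a top contact at a position $c$ and a bottom contact at a position $c'\ge c$, which by monotonicity of $p$ forces $\tau_c\le\beta_{c'}$; conversely, given $c\le c'$ with $\tau_c\le\beta_{c'}$, the path following $T$ through its $c$-th east step and then staying as low as the region allows witnesses the failure of $(ii)$. As $\tau$ and $\beta$ are weakly increasing, the weakest such inequality is $\tau_0\le\beta_{x-1}$, so $(ii)$ fails iff $(iii)$ fails. (In particular, when $(iii)$ holds every path has all bottom contacts before all top contacts, since a bottom contact sits at height $\le\beta_{x-1}<\tau_0\le$ the height of any top contact.)

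For $(ii)\Leftrightarrow(i)$ I would invoke the bijections $\swap$ of Lemmas~\ref{lem:swap1} and~\ref{lem:swapseq}. Fix $\Mat H$; every path of $\P(T,B,\Mat H)$ has exactly $n=x-\size{\Mat H}$ contacts, and $\swap$ partitions $\P(T,B,\Mat H)$ into orbits, each of a well-defined width $u$ (the number of unmatched letters of the sequence of contacts, which $\swap$ preserves), containing exactly one path with $t=i$ for each $i\in[(n-u)/2,(n+u)/2]$. Condition $(ii)$ says precisely that every orbit has full width $u=n$, i.e.\ meets every $i\in[0,n]$; in that case the number of paths of $\P(T,B)$ with $t=i$, $b=j$ is the total number of $\swap$-orbits in the sets $\P(T,B,\Mat H)$ with $\size{\Mat H}=x-i-j$, which depends only on $i+j$, so $(i)$ holds. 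Conversely, suppose some path violates $(ii)$; setting aside the easy case in which $T$ and $B$ share an east step (then every path has a bottom contact there, so no path has $b=0$, contradicting $(i)$), this path lies in an orbit of width $u<n$, which contains no path with $t=0$ nor with $t=n$. On the diagonal $i+j=n$, the central value(s) of $t$ lie in every orbit, so the number of paths there equals the total number of orbits, while the number with $t=n$, $b=0$ equals the number of full-width orbits, which is strictly smaller; hence the distribution is not constant on the diagonal and $(i)$ fails. Together with the previous paragraph this closes the first chain.

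The implication $(i')\Rightarrow(ii')$ is parallel: if $(ii')$ fails then either $T$ and $B$ share an east step (so no path of $\P(T,B,\Mat H)$ has $b=0$, whereas $(i')$ would give exactly one) or some $\swap$-orbit of $\P(T,B,\Mat H)$ has width $<n$, so the number of paths with $t=n$, $b=0$ is $0$ rather than $1$. For $(ii')\Rightarrow(i')$, under $(ii')$ all $\swap$-orbits of $\P(T,B,\Mat H)$ have full width $n$, so for each admissible pair $(i,j)$ the number of paths with $t=i$, $b=j$ equals the number of orbits; it remains to see this number is $1$. This reduces, via the hypothesis $\P(T,B,\Mat H)\ne\emptyset$ and the fact that each full-width orbit is determined by its unique member with $t=0$, to the following strengthening of Proposition~\ref{prop:unique-path}: \emph{$\P(T,B,\Mat H)$ contains at most one path $P$ with $t(P)=0$}. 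I would prove this by induction on $x$. If $P,P'$ both have $t(P)=t(P')=0$, their last east steps are of the same kind (bottom contact or non-contact): a bottom contact in $P$ would give $p_{x-1}=\beta_{x-1}$, while a non-contact in $P'$ would give $p'_{x-1}=$ (the largest entry of $\Mat H)>\beta_{x-1}$, and $P$ would have to realize that same largest entry at an earlier position, forcing $\beta_{x-1}=p_{x-1}\ge$ that entry $>\beta_{x-1}$, a contradiction. Deleting the last east step then reduces to the boundaries $T,B$ with their last east steps removed and $\Mat H$ left unchanged (bottom-contact case) or with its last entry dropped (non-contact case), to which the inductive hypothesis applies.

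The main obstacle is this last uniqueness statement. For $n=1$ it is exactly the second half of Proposition~\ref{prop:unique-path}, proved there by directly comparing the two candidate paths; the worry for larger $n$ is that the presence of many bottom contacts makes such a comparison cumbersome. The resolution above---inducting on $x$ and peeling off one east step at a time, so that at each stage only the type of a single step is in question---keeps the argument as light as that of Proposition~\ref{prop:unique-path}. Granting it, the remainder is a routine transcription of the orbit structure of $\swap$ from Section~\ref{sec:bottom-top} in terms of the sequences $p$, $\tau$, $\beta$.
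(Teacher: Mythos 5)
Your overall strategy is the same as the paper's---reduce everything to the orbit structure of $\swap$ via Lemma~\ref{lem:swapseq}, then nail down uniqueness of the ``extreme'' path---but several of the pieces are assembled differently, and one small slip needs repair.

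What you do differently, and what it buys: (1)~For $(ii)\Leftrightarrow(iii)$ you pass to the coordinate encoding $p_0\le\dots\le p_{x-1}$ and argue both directions explicitly; the paper states $(ii)\Rightarrow(iii)$ in one sentence and calls $(iii)\Rightarrow(ii)$ obvious, so your version is essentially a fleshed-out form of the same idea. (2)~You prove $(i)\Leftrightarrow(ii)$ directly by summing orbit counts over all $\Mat H$ with $\size{\Mat H}=x-i-j$, whereas the paper proves $(i')\Leftrightarrow(ii')$ first and then sums; both are fine. (3)~For the uniqueness needed in $(ii')\Rightarrow(i')$ you characterize the orbit by its bottom element (the unique path with $t=0$, i.e.\ the element of $\R_n$), whereas the paper takes the top element ($\R_0$, the unique path with $b=0$); by symmetry these are interchangeable. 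More substantively, you prove this uniqueness by a clean induction on $x$, peeling off the last east step and checking its type is forced. The paper simply says ``as in the proof of Proposition~\ref{prop:unique-path}'', which is only a one-contact statement and requires nontrivial adaptation to the case of $e$ top contacts; your induction is an honest and arguably better proof of what the paper glosses over.

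The one real slip: in $(i')\Rightarrow(ii')$ you write ``some $\swap$-orbit of $\P(T,B,\Mat H)$ has width $<n$, so the number of paths with $t=n$, $b=0$ is $0$ rather than $1$.'' That inference does not hold: the existence of one narrow orbit does not preclude other orbits of full width, so the count at $(n,0)$ need not be $0$. Your own $(i)\Leftrightarrow(ii)$ paragraph contains the correct comparison and you should reuse it here: the count at the central value(s) of $t$ equals the total number of orbits, the count at $(n,0)$ equals the number of full-width orbits, and if some orbit has width $<n$ the latter is strictly smaller than the former; since $(i')$ forces both to equal $1$, this is a contradiction. (Equivalently: assume $(i')$, deduce from the central count that there is exactly one orbit, deduce from the count at $(n,0)$ that it has full width, and conclude $(ii')$.) With this repair your argument is complete. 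You should also say one sentence making explicit that $(ii)$ (and $(ii')$) is interpreted strictly---a step that is simultaneously a top and a bottom contact violates it---since otherwise the corollary would fail whenever $T$ and $B$ share an east step; you clearly use this reading, but only implicitly.
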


\begin{proof}
  Let us first show that (ii') implies (i').  When the bottom
  contacts of a path occur before its top contacts, all the letters
  in its sequence of contacts are unmatched.  By assumption, this is
  the case for all paths in $\P(T, B, \Mat H)$.  Thus we can apply
  Lemma~\ref{lem:swapseq}: let $\Set D=\emptyset$, $f=0$ and
  $e=x-\size{\Mat H}$, the number of contacts of a path in $\P(T, B,
  \Mat H)$.  We then obtain a sequence of bijections
  $\R_0\overset{\swap}\rightarrow
  \R_1\overset{\swap}\rightarrow\cdots\overset{\swap}\rightarrow
  \R_e$, where
  \[
  \R_s=\{P\in\P(T, B, \Mat H):t(P)=e-s,\ b(P)=s\}.
  \]
  Finally, as in the proof of Proposition~\ref{prop:unique-path} we
  can see that there is at most one path in $\R_0$, because this is
  just the set of paths in $\P(T, B, \Mat H)$ with no bottom
  contacts.

  Conversely assume (i') and $\P(T,B,\Mat H)\neq\emptyset$.  Then
  there is precisely one path in $\R_0$.  The sequence of contacts in
  this path contains only unmatched letters.  Since $\swap$ preserves
  the number of unmatched letters, all paths in $\P(T,B,\Mat H)$ have
  sequences of contacts with unmatched letters only.  This implies
  that a bottom contact cannot appear after a top contact in any
  path.

  The equivalence of (i') and (ii') entails the equivalence of (i)
  and (ii), because we can use (i') and (ii') for all choices of
  $\Mat H$ separately.  Condition (ii) implies (iii) because if the
  last east step of $B$ is at the same height as the first east step
  of $T$ or higher we can choose a path beginning with a top contact
  and ending with a bottom contact.  Finally, (iii) obviously implies
  (ii).
\end{proof}

We remark that the naive generalization of Corollary~\ref{cor:i+j}
to $k$-tuples of paths does not hold.  For example, in the set $\P^2(T,B)$ where $T=NNEE$ and $B=ENEN$, there is only one pair of paths with
$(h_0,h_1,h_2)=(0,1,2)$, but there are two pairs of paths with
$(h_0,h_1,h_2)=(1,1,1)$.

\begin{cor}\label{cor:easy-bottom}
  Let $T=N^yE^x$, and let $B$ be any path from the origin to $(x,y)$, weakly below $T$
  and ending with a north step. Then the number of
  paths in $\P(T, B)$ with $i$ top and $j$ bottom contacts equals the number of paths with north and east
  steps from the origin to $(x-i-j, y-2)$ staying weakly above $B$.
\end{cor}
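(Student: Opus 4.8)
The plan is to use the involution $\swapall$ to reduce Corollary~\ref{cor:easy-bottom} to a straightforward lattice-path count. Since $T = N^y E^x$, every east step of a path $P \in \P(T,B)$ has the same $x$-coordinate as the corresponding east step of $T$ only if $P$ reaches the top edge, so top contacts of $P$ are precisely the east steps lying on the top row $y = y$. Because $B$ ends with a north step, the last step of $B$ is lower than the first east step of $T$ (which sits at height $y$), so the hypotheses of Corollary~\ref{cor:i+j}(iii) are met. Hence statement (ii) of Corollary~\ref{cor:i+j} applies: in every path of $\P(T,B)$ all bottom contacts occur before all top contacts, and the same holds within each $\P(T,B,\Mat H)$. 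By the refined equivalence, for each $\Mat H$ with $\P(T,B,\Mat H) \neq \emptyset$ and $i+j+\size{\Mat H} = x$, there is exactly one path in $\P(T,B,\Mat H)$ with $i$ top and $j$ bottom contacts.

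First I would fix $i$ and $j$ and count the paths in $\P(T,B)$ with exactly $i$ top and $j$ bottom contacts by summing over the admissible sequences $\Mat H$. Since each nonempty $\P(T,B,\Mat H)$ contributes exactly one such path whenever $|\Mat H| = x - i - j$, the count equals the number of sequences $\Mat H$ of length $x-i-j$ for which $\P(T,B,\Mat H)$ is nonempty and contains a path with the prescribed contact counts. The next step is to describe these $\Mat H$ geometrically: an east step that is neither a top nor a bottom contact lies strictly between $B$ and the top row, so its $y$-coordinate ranges in $\{0,1,\dots,y-1\}$ but is further constrained by $B$ — namely, at $y$-coordinate $h$ the step must lie weakly above $B$. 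I would then set up the natural bijection between such admissible sequences $\Mat H$ (together with the forced placement of the $j$ bottom contacts on $B$ and the $i$ top contacts on the top row) and lattice paths from the origin to $(x-i-j, y-2)$ that stay weakly above $B$: intuitively, we delete the $j$ bottom-contact east steps (which lie on $B$), delete the $i$ top-contact east steps and the topmost row they occupy, and delete the final north step of $B$ together with one more row, leaving a region of width $x-i-j$ and height $y-2$ whose lower boundary is (the truncation of) $B$ and whose upper boundary is now free.

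More precisely, I would argue as follows. Given $P \in \P(T,B)$ with $i$ top and $j$ bottom contacts, its bottom contacts are an initial segment of the east steps of $B$ and its top contacts are a final segment of east steps along $y = y$; between them, $P$ makes exactly $y - 1$ north steps from height $1$ up to height $y-1$, wait — rather, $P$ must pass through all intermediate heights, and the unmatched east steps record precisely the profile of $P$ between leaving $B$ and reaching the top. Stripping the $j$ bottom contacts, the $i$ top contacts, the last north step of $B$, and the top two rows yields a path of width $x - i - j$ and height $y - 2$ weakly above the correspondingly truncated $B$; conversely any such path extends uniquely. I would make the bookkeeping precise by tracking, for each height, how many east steps of $P$ lie at that height, and checking that the weakly-above-$B$ condition transfers correctly under the truncation.

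The main obstacle I anticipate is getting the boundary bookkeeping exactly right: verifying that deleting the two top rows and the final north step of $B$ produces exactly the region "weakly above $B$" of width $x-i-j$ and height $y-2$ claimed in the statement, with no off-by-one error in the dimensions, and confirming that the correspondence is genuinely a bijection (surjectivity requires that every path in the smaller region lifts to a legitimate element of some $\P(T,B,\Mat H)$ with the right contact counts, which uses the hypothesis that $B$ ends in a north step to guarantee room for the extra rows). Once this is pinned down, the corollary follows by combining it with the uniqueness statement from Corollary~\ref{cor:i+j}(i').
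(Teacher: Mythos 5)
You correctly identify Corollary~\ref{cor:i+j} as the key tool, but you deploy it in a more roundabout way than necessary, and one of your structural claims is wrong. The paper uses the \emph{unrefined} statement (i): since the number of paths with $i$ top and $j$ bottom contacts depends only on $c=i+j$, it suffices to count paths with $c$ top contacts and $0$ bottom contacts. Because $T=N^yE^x$, every top contact is an east step at height $y$ and hence such a path must end with $N E^c$; having no bottom contact forces it to begin with $N$ and to share no east step with $B$. Removing the terminal $N E^c$ gives a path to $(x-c,y-1)$ strictly above $B$, and removing the initial $N$ (translating down one unit) gives a path to $(x-c,y-2)$ weakly above $B$. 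That is the whole proof.

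Your attempt to set up a bijection directly for general $(i,j)$ rests on the assertion that "its bottom contacts are an initial segment of the east steps of $B$," and this is false. Take $T=NNEE$, $B=ENEN$, $P=NEEN$: the unique bottom contact of $P$ is the \emph{second} east step of $B$, not the first. A monotone $NE$-path can leave $B$ and later coincide with it again when $B$ rises to meet it, so bottom contacts need not form a prefix of $B$ (nor a prefix of $P$). Consequently your prescription "delete the $j$ bottom-contact east steps \dots and the topmost row" is not a uniform truncation and the claimed bijection to paths to $(x-i-j,y-2)$ above a "correspondingly truncated $B$" does not go through as stated. Your earlier reduction of the count to the number of admissible sequences $\Mat H$ of length $x-c$ is fine, but that observation is most cleanly cashed out by passing to the $(c,0)$ case, which is exactly what Corollary~\ref{cor:i+j}(i) hands you for free.
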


\begin{proof}
  By Corollary~\ref{cor:i+j}, it is enough to count paths in $\P(T, B)$ with $c:=i+j$ top contacts and no bottom contacts.
  Such paths are in bijection (by removing the terminal $NE^c$) with paths from the origin to $(x-c, y-1)$
  strictly above $B$, which in turn are in bijection (by removing the initial $N$) with paths from
  the origin to $(x-c, y-2)$ weakly above $B$, as claimed.
\end{proof}

In particular, Corollary~\ref{cor:easy-bottom} allows us to refine a formula due to
Vladimir Korolyuk~\cite{MR0067418} for the number of lattice paths
above a linear boundary whose slope is an integer.  For natural
numbers $m$, $n$, $r$ and $k$, let
$p_{r,k}(m,n)$ be the number of lattice paths with north and east steps from the origin to $(m,n)$ that do not go below the line $y = rx-k$.
Korolyuk's formula is equivalent to
\begin{equation}\label{eq:Korolyuk}
  p_{r,k}(m,n) = \sum_{i=0}^{\lfloor\frac{k}{r+1}\rfloor}%
  (-1)^i\frac{n+k+1-rm}{n+k+1-ri}\binom{m+n+k-(r+1)i}{m-i}\binom{k-ri}{i}
\end{equation}
for $n\ge rm-k$. When $k=0$, this equation trivially reduces to the generalized ballot theorem,
\begin{equation}\label{eq:ballot}
  p_{r,0}(m,n) = \frac{n+1-rm}{n+1}\binom{m+n}{m} %
  = \binom{m+n}{m} - r \binom{m+n}{m-1}.
\end{equation}
Amazingly, for $r=1$ Equation~\eqref{eq:Korolyuk} also has a closed form,
\begin{equation*}
  p_{1,k}(m,n) %
  = \binom{m+n}{m} - \binom{m+n}{m-1-k},
\end{equation*}
which is not hard to prove using the reflection principle.  For some
information on the history of these results, see Marc Renault's
note~\cite{MR2398417}.

Using Korolyuk's formula, Corollary~\ref{cor:easy-bottom} gives a formula for the number of paths from $(0,0)$ to $(m,n)$ not going below the line $y = rx-k$ and having $i$ top and $j$ bottom contacts, assuming that $n > rm-k$.
By reflecting the picture about the line $y=-x$, a similar result can be obtained for paths not going below a line of slope $1/r$.

It may be surprising that these are the only known formulas that we can use in applications of
Corollary~\ref{cor:easy-bottom}.  Indeed,
all the explicit formulas or generating function solutions for the
number of paths above other boundaries we are aware of, e.g.
\cite{MR1909568,MR0061567,MR2469258,MR1798327,MR570213,MR1456723,MR2500155},
are such that the starting and endpoints must lie on the boundary and
thus cannot be used in conjunction with
Corollary~\ref{cor:easy-bottom}.

\subsection{A decomposition of the generalized Tamari lattice into
  chains}
The generalized Tamari lattices were introduced by Bergeron in the
context of diagonal harmonics~\cite{Bergeron2011}.  In this section,
we exhibit a connection between the covering relation in these
lattices and the map $\swap$ introduced in Definition~\ref{def:swap}.

For an integer $r\geq 1$, the elements of the $n$-th $r$-Tamari
lattice are the paths in $\P\big(T_n^{(r)}, B_n^{(r)}\big)$ where
$T_n^{(r)} = N^n E^{rn}$ and $B_n^{(r)} = \left(NE^r\right)^n$.  By
Equation~\eqref{eq:ballot}, the generalized Tamari lattice has
$p_{r,0}(n,rn) = \frac{1}{rn+1}\binom{(r+1)n}{n}$ elements.

Let $Q$ be a path in $\P\big(T_n^{(r)}, B_n^{(r)}\big)$ regarded as a
sequence of east steps, disregarding the final $r$ east steps, following our convention to ignore steps which are simultaneously top and bottom contacts.
Consider any decomposition of $Q$ of the
form $Q = W \h_1 X \h_2 Z$, where $\h_1$ and $\h_2$ are single east
steps with different $y$-coordinates (possibly top or bottom contacts), 
$X$ is maximal (possibly empty) such that the right endpoints of
all its steps are strictly above the line of slope $1/r$ beginning at
the right endpoint of $\h_1$.  Then $Q$ is covered by the path $P = W
X \h_2 \h_3 Z$, where $\h_3$ is a single east step with the same
$y$-coordinate as $\h_2$.  The partial order in the generalized
Tamari lattice, which we denote by $\preceq$, is the reflexive and
transitive closure of this covering relation.  Its top element is
$T_n^{(r)}$ and its bottom element is~$B_n^{(r)}$.

The connection to the map $\swap$ from Definition~\ref{def:swap} is
now immediate:
\begin{prop}
  Let $P\in\P\big(T_n^{(r)}, B_n^{(r)}\big)$ such that $\swap(P)$ is defined.  
  Then $P$ covers $\swap(P)$ in the generalized Tamari lattice.
\end{prop}
\begin{proof}
  We can write the east steps of $P$ uniquely as $P=W X \t\t \dots
  \t$, where $X$ is maximal not containing top or bottom
  contacts.  Then $\swap(P)=W \b X \t \dots \t$.  Since all steps in
  $X$ are above the line of slope $1/r$ beginning at the right
  endpoint of $\b$ in $\swap(P)$, $P$ indeed covers $\swap(P)$.
\end{proof}

\begin{cor}
  Let $\R_0$ be the set of paths in $\P\big(T_n^{(r)},
  B_n^{(r)}\big)$ without proper bottom contacts, i.e., which are not
    simultaneously top contacts. Then
  \[
  \bigcup_{P\in\R_0} \big(P, \swap(P),\swap^2(P),\dots,\swapall(P)\big)
  \]
  is a decomposition of the generalized Tamari lattice into
  $p_{r,0}\big(n-1,r(n-1)\big) =
  \frac{1}{r(n-1)+1}\binom{(r+1)(n-1)}{n-1}$ saturated chains.
\end{cor}
It turns out that, for $r=1$, this decomposition is symmetric in the
following sense.
\begin{prop}
  For $P,Q\in \P\big(T_n^{(r)}, B_n^{(r)}\big)$ with $Q\preceq P$,
  let $d(Q,P)$ be the minimal distance from $Q$ to $P$ in the Hasse
  diagram of the generalized Tamari lattice.  Then $d\big(P,
  T_n^{(r)}\big) = rn-t(P)$.  For the classical Tamari lattice we
  have $d\big(B_n^{(1)}, P\big) = n-b(P)$.
\end{prop}
\begin{proof}
  Let us first show that for any path $Q$ we have $d\big(Q,
  T_n^{(r)}\big)\geq rn-t(Q)$.  By definition of the covering
  relation, the covering path can have at most one more top contact
  than the path covered.  Applying this to a saturated chain from $Q$
  to $T_n^{(r)}$ of length $d\big(Q, T_n^{(r)}\big)$, we obtain that
  $t(Q) + d\big(Q, T_n^{(r)}\big) \geq t\big(T_n^{(r)}\big) = rn$.

  To show that equality holds, we use induction on $D(Q) = rn - t(Q)$.  We have $D(Q) = 0$ only
  for $Q=T_n^{(r)}$, in which case the formula for the distance is
  trivially correct.  Otherwise, decompose $Q$ as $Q = W \h_1 \t
  \dots \t$ where $\h_1$ is the last east step of $Q$ which is not a
  top contact.  Thus $Q$ is covered by the path $P = W \t\t \dots \t$
  and $t(P)=t(Q)+1$.  By the induction hypothesis the formula is
  correct for $P$.  Since $d\big(Q, T_n^{(r)}\big)\leq d\big(P,
  T_n^{(r)}\big) + 1$, we have
  \[
  rn-t(Q) \leq d\big(Q, T_n^{(r)}\big) \leq d\big(P, T_n^{(r)}\big) +
  1 = rn - t(P) + 1,
  \]
  and therefore equality.

  We prove the formula for the distance of a path to the bottom
  element of the classical Tamari lattice (i.e., $r=1$) similarly,
  with the two main ingredients being the following.  First, we use
  the fact that in any covering relation, the covered path can have at most one more bottom contact
  than the covering path.  Let us remark here
  that the generalized Tamari lattice does not have this property.
  Second, if $P\neq B_n^{(1)}$, we decompose it as $P = W X \h_2 \h_3
  Z$ where $\h_3$ is the last bottom contact preceded by an east step
  with the same $y$-coordinate, and $X$ is maximal such that the
  right endpoints of all its (east) steps are strictly above the line
  of slope $1$ passing through the right endpoint of $\h_2$.  Then
  $P$ covers $Q = W \h_1 X \h_2 Z$, where $\h_1$ and $\h_2$ are both
  bottom contacts.  In both $P$ and $Q$ the east steps in $X$ do not
  contain any bottom contacts.
\end{proof}

\subsection{Two conjectures}

Given Theorems~\ref{thm:bottom-top}
and~\ref{thm:bottom-left-top-right}, it is natural to ask whether
there are other pairs of lattice paths statistics involving contacts
that have the same distribution.  As a first tentative answer to this
question we offer the following conjecture:
\begin{cnj}
  Suppose that $T$ and $B$ touch only at the origin and at
  their common endpoint.  Then the following statements are
  equivalent:
  \begin{enumerate}
  \item the pairs $(b, \ell)$ and $(b, t)$ have the same joint
    distribution over $\P(T, B)$,
  \item the pairs $(b, \ell)$ and $(\ell, r)$ have the same joint
    distribution over $\P(T, B)$,
  \item the pairs $(t, r)$ and $(b, t)$ have the same joint
    distribution over $\P(T, B)$,
  \item the pairs $(t, r)$ and $(\ell, r)$ have the same joint
    distribution over $\P(T, B)$,
  \item either $T = (N E)^n$ or $B = (E N)^n$ for some $n$.
  \end{enumerate}
\end{cnj}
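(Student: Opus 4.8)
\emph{Overall plan.} The plan is to separate the ``soft'' implications — which follow from Theorems~\ref{thm:bottom-top} and~\ref{thm:bottom-left-top-right} together with an analysis of the two staircases — from the single hard implication, that equidistribution forces a staircase. First, Theorem~\ref{thm:bottom-left-top-right} gives $(b,\ell)\sim(t,r)$ over $\P(T,B)$ with no hypothesis at all, and feeding this into the defining identities of (i) and of (iv) yields at once that (i)$\iff$(iii) and (ii)$\iff$(iv). Setting $x=1$ in the identity of (i), or $y=1$ in that of (ii), shows moreover that each of (i)--(iv) implies the single-statistic equidistribution $\ell\sim t$, which in view of $t\sim b$ (Theorem~\ref{thm:bottom-top}) and $\ell\sim r$ (Theorem~\ref{thm:bottom-left-top-right}) is equivalent to $b\sim\ell$ and to $b\sim r$. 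So it remains to prove, under the hypothesis that $T$ and $B$ meet only at the origin and at their common endpoint, that (v) implies (i) and (ii), and that the failure of (v) makes (i) and (ii) fail.

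\emph{Staircases give equidistribution.} If $B=(EN)^n$ then $b(P)=r(P)$ for every $P\in\P(T,B)$: an east step of $B$ runs from a diagonal point $(j,j)$ to $(j+1,j)$, and from $(j+1,j)$ the only continuation weakly above $B$ is the north step of $B$ to $(j+1,j+1)$, a right contact; conversely that north step is entered only along the preceding east step of $B$, so the bottom and right contacts of $P$ are matched in pairs. Dually $T=(NE)^n$ forces $t(P)=\ell(P)$ for every $P$. If $T=(NE)^n$, then $t\equiv\ell$ makes (i) an identity and, with Theorem~\ref{thm:bottom-left-top-right}, makes (ii) hold; if $B=(EN)^n$, then $b\equiv r$ turns (iii) into Theorem~\ref{thm:bottom-top} and, with Theorems~\ref{thm:bottom-top} and~\ref{thm:bottom-left-top-right}, gives (iv); by the soft reductions all of (i)--(iv) follow. (This step needs neither the hypothesis on $T,B$ nor the bijection $\swapall$.)

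\emph{The converse.} Comparing the heights of $T$ and $B$ after $k$ steps, the hypothesis that they meet only at the endpoints forces $T$ to begin with $N$ and end with $E$ and $B$ to begin with $E$ and end with $N$; in particular, once (i) or (ii) is assumed, $\max_P t=\max_P\ell$ forces the grid to be a square $(x,y)=(n,n)$. Under these constraints ``$T\neq(NE)^n$'' means exactly ``$T$ contains the factor $NN$'' and ``$B\neq(EN)^n$'' means exactly ``$B$ contains the factor $EE$'', so the entire remaining content is: \emph{if $T$ contains an $NN$ and $B$ contains an $EE$, then $(b,\ell)\not\sim(b,t)$.} This cannot be reduced to a one-variable statement — for $T=N^nE^n$, $B=E^nN^n$ (not staircases, meeting only at the endpoints) the distributions of $\ell$ and of $t$ coincide, e.g.\ both are $3+2y+y^2$ for $n=2$, while (i) still fails. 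The fibres can be identified explicitly — $\{P:t(P)=0\}=\P(\bar T,B)$ and $\{P:\ell(P)=0\}=\P(T^\flat,B)$, with $\bar T$ obtained from $T$ by deleting the initial $N$ and appending a final $N$ and $T^\flat$ from $T$ by deleting the final $E$ and prepending an initial $E$, and $\bar T=T^\flat$ precisely when $T=(NE)^n$ — but these two path-counts can still be equal in non-staircase cases, so the discrepancy must live in a bivariate coefficient. In the small instances I have examined it is already present in the coefficient of $x^{n-1}y^0$, that is $\#\{P:b(P)=n-1,\ \ell(P)=0\}\neq\#\{P:b(P)=n-1,\ t(P)=0\}$ (for instance $2\neq1$ for $(T,B)=(NNEE,EENN)$ and $1\neq2$ for $(T,B)=(NNEENE,ENEENN)$), and one natural route is to establish this inequality in general.

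\emph{Main obstacle.} The crux is precisely this last step: turning ``$T$ or $B$ is not a staircase'' into a strict inequality of bivariate coefficients. What makes it hard is that no single invariant witnesses the discrepancy uniformly — not the marginal $\ell$ versus $t$, not the lowest coefficient $\#\{b=\ell=0\}$ versus $\#\{b=t=0\}$, not the extremal $y^n$-coefficient — so one is forced either to control a near-extremal bivariate coefficient through an inductive ``peeling of a corner'' of $T$ or of $B$ that decreases $n$, or to bring in the lattice-path-matroid structure through a deletion--contraction or an activity-based comparison akin to the proof of Theorem~\ref{thm:bottom-left-top-right}.
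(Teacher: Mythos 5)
This statement is left as a \emph{conjecture} in the paper; it is never proved there. Immediately after stating it, the authors observe only the easy direction (v)\,$\Rightarrow$\,(i)--(iv) via the chain
\[
(t,b)\sim(b,t)\sim(b,\ell)\sim(t,r)\sim(\ell,r)\sim(r,\ell)\sim(r,t)\sim(\ell,b),
\]
which uses Theorems~\ref{thm:bottom-top} and~\ref{thm:bottom-left-top-right} together with $t\equiv\ell$ when $T=(NE)^n$ (and symmetrically $b\equiv r$ when $B=(EN)^n$), and then remark that they checked the full equivalence by computer for endpoints $(n,n)$ with $n\le 6$. Your write-up reproduces exactly this proven content, and in fact organizes it a bit more carefully: the unconditional soft reductions (i)$\iff$(iii), (ii)$\iff$(iv), and the observation that each of (i)--(iv) forces $\ell\sim t$ and hence $x=y$, are correct and not spelled out in the paper. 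One small point worth making explicit: in the $B=(EN)^n$ case, deriving (iv) from Theorems~\ref{thm:bottom-top} and~\ref{thm:bottom-left-top-right} needs the step that $(b,\ell)\sim(t,r)$ implies $(\ell,b)\sim(r,t)$ by swapping the two formal variables; as written you invoke ``Theorems~\ref{thm:bottom-top} and~\ref{thm:bottom-left-top-right}'' without naming this move.

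The hard direction — that a non-staircase region makes (i)--(iv) fail — is precisely the part the paper does \emph{not} prove, and your ``main obstacle'' paragraph diagnoses the difficulty accurately (no marginal or single extremal coefficient witnesses the discrepancy uniformly; $T=N^nE^n$, $B=E^nN^n$ already kills the marginal approach). So there is a gap in your argument, but it coincides exactly with what the paper itself leaves open; you have not missed anything the authors established, and you were honest about what is missing. Your suggested attack routes — peeling a corner cell of the Young diagram to set up an induction, or exploiting deletion--contraction / the activity structure of the lattice path matroid as in the proof of Theorem~\ref{thm:bottom-left-top-right-restated} — are the natural ones; none is pursued in the paper.
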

Rotating the region by $180$ degrees, or reflecting it about the line
$y=-x$ or $y=x$ we see that it is sufficient to prove the equivalence
of the joint distribution of $(b, \ell)$ and $(b, t)$ with the
assertion that either $T = (N E)^n$ or $B = (E N)^n$.  

Moreover, when $T = (N E)^n$, each top contact coincides with a left
contact.  Thus, in this case it is clear that $(b, \ell)$ and $(b,
t)$ have the same joint distribution.  When $B = (E N)^n$, each
bottom contact coincides with a right contact and we obtain
$(b,\ell)\sim(t,r)\sim(t,b)\sim(b,t)$ by applying
Theorems~\ref{thm:bottom-top} and~\ref{thm:bottom-left-top-right}.

We remark that $(b,r)\sim(r,b)$ seems to force symmetry of the bottom
boundary $B$, but symmetry of the top boundary $T$ is neither
sufficient nor necessary, as the example $T=N^2 E N^2 E^3$ shows.

Another natural question arises from Corollary~\ref{cor:i+j}, which
deals only with the statistics $(b, t)$: are there regions for which
we have an analogous result for the pair $(b, \ell)$?  It appears
that the answer is affirmative only when the result holds trivially:
\begin{cnj}
  Suppose that $T$ and $B$ touch only at the origin and at
  their common endpoint.  Then the number of paths in $\P(T, B)$ with
  $i$ bottom and $j$ left contacts depends only on $i+j$ if and only
  if $T = N^n E^n$ and $B = (E N)^n$, or $T = (N E)^n$ and $B = E^n
  N^n$ for some $n$.
\end{cnj}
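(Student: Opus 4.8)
The plan is to prove the two implications separately; the forward (sufficiency) direction follows from the tools already assembled, while the converse is the delicate part.

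\emph{Sufficiency.} Suppose $T=N^nE^n$ and $B=(EN)^n$. Reflecting the whole configuration across the main diagonal interchanges $\P(T,B)$ with $\P((NE)^n,E^nN^n)$ and swaps bottom contacts with left contacts, so it suffices to treat this one region. Here $\P(N^nE^n,(EN)^n)$ consists exactly of the monotone paths from $(0,0)$ to $(n,n)$ with $x-y\le 1$ throughout, and $P\mapsto NPE$ is a bijection onto the set of Dyck paths of semilength $n+1$. Under it a left contact of $P$ becomes a step of the initial ascent and an east step of $P$ starting on the line $x=y$ becomes a down step ending on the diagonal, so $\ell(P)$ equals the first‑peak height of $NPE$ minus $1$ and $b(P)$ equals the number of returns of $NPE$ minus $1$. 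Hence $\sum_{P\in\P(T,B)}x^{b(P)}y^{\ell(P)}$ is, up to the shift by $1$ in each exponent, the coefficient of $q^{n+1}$ in $\sum_{D}x^{\mathrm{ret}(D)}y^{\mathrm{fp}(D)}q^{|D|}$, where the sum is over all Dyck paths and $\mathrm{ret},\mathrm{fp},|D|$ are the number of returns, the first‑peak height, and the semilength. The first‑return decomposition of a Dyck path into its first arch and the remainder yields a functional equation for this series whose solution is
$$
\sum_{D}x^{\mathrm{ret}(D)}y^{\mathrm{fp}(D)}q^{|D|}=\frac{xyq}{(1-xqC(q))(1-yqC(q))},
$$
with $C(q)$ the Catalan generating function. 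Dividing by $xy$ and expanding the two geometric series shows the coefficient of $x^iy^j$ in $[q^{n+1}]$ of this series is $[q^{\,n-i-j}]C(q)^{\,i+j}=\tfrac{i+j}{2n-i-j}\binom{2n-i-j}{n-i-j}$, which manifestly depends only on $i+j$.

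\emph{Necessity: first reductions.} Assume the number of paths in $\P(T,B)$ with $i$ bottom and $j$ left contacts depends only on $i+j$; equivalently, the set $S=\{(b(P),\ell(P)):P\in\P(T,B)\}$ is a union of full anti‑diagonals $\{(i,j):i+j=s,\ i,j\ge0\}$ with a common multiplicity on each. The boundary paths lie in $\P(T,B)$, and since $T$ and $B$ share only their endpoints, $(b(B),\ell(B))=(x,0)$ and $(b(T),\ell(T))=(0,y)$. From $(x,0)\in S$ we get $(0,x)\in S$, forcing $x\le y$ since $\ell\le y$; symmetrically $y\le x$, so the box is a square, $x=y=:n$. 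The same argument forbids $b(P)+\ell(P)>n$ for any path (else some path would have $b>n$), and forces, for every $0\le i\le n$, a path with exactly $i$ bottom and $n-i$ left contacts; since $\ell(P)=n$ only for $P=T$ and $b(P)=n$ only for $P=B$, this path is \emph{unique} for each $i$ and the common multiplicity at level $n$ is $1$. Finally, as $T$ and $B$ separate immediately at the origin and $T$ is weakly above $B$, the first step of $T$ is $N$ and that of $B$ is $E$; likewise $T$ ends with $E$ and $B$ with $N$.

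\emph{Necessity: the main obstacle.} What remains — and this is the crux — is to upgrade this rigidity to $T=N^nE^n,\ B=(EN)^n$ (or the diagonal transpose). I would attempt induction on $n$: the forced corner steps $T=N\cdots=\cdots E$, $B=E\cdots=\cdots N$ suggest peeling off the northwest and southeast corners to reach a smaller instance, the difficulty being to show that the ``union of anti‑diagonals'' property together with uniqueness of the extremal paths is inherited by the smaller region and leaves only the two stated shapes. A concrete benchmark for what must be ruled out is the introduction's region $T=NNENEE$, $B=ENEENN$: there $b(P)+\ell(P)\le 3$ for every path, yet the coefficient of $xy^2$ in $\sum x^{b}y^{\ell}$ is $0$ while that of $x^3$ is $1$; one must show that \emph{any} simultaneous departure of $T$ below $N^nE^n$ and of $B$ below $(EN)^n$ produces such a vanishing coefficient, and converting ``departure'' into a missing monomial in general is the step I expect to be genuinely hard. (By Theorem~\ref{thm:bottom-left-top-right} the whole question is equivalent to the analogous one for the pair $(t,r)$, and it is tempting to play this against Corollary~\ref{cor:i+j}, which settles the analogous classification for $(t,b)$; but those two pairs behave differently enough that the transfer is not routine, which is presumably why the statement is posed as a conjecture.)
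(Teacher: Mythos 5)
The statement you are working on is posed in the paper as a \emph{conjecture}: the authors offer no proof in either direction and state only that they verified it computationally for all pairs $(T,B)$ ending at $(n,n)$ with $n\le 6$. There is therefore no ``paper's own proof'' for me to compare against. That said, your proposal is a genuine advance over what the paper contains, and its correct parts are worth recording.

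Your sufficiency argument is correct. The reflection across the diagonal does exchange bottom with left contacts (and top with right), so it suffices to handle $T=N^nE^n$, $B=(EN)^n$. The map $P\mapsto NPE$ is a bijection to Dyck paths of semilength $n+1$, sends $\ell(P)$ to first-peak height minus $1$ and $b(P)$ to number of returns minus $1$, and the first-return decomposition gives $\sum_{D\neq\emptyset}x^{\mathrm{ret}(D)}y^{\mathrm{fp}(D)}q^{|D|}=xyq/\bigl((1-xqC(q))(1-yqC(q))\bigr)$, from which the coefficient of $x^iy^j$ in the relevant $q$-coefficient is $[q^{n-i-j}]C(q)^{i+j}$, a function of $i+j$ alone. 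An even shorter route, staying inside the paper's toolkit, is to take the transposed instance $T=(NE)^n$, $B=E^nN^n$: for this $T$ every left contact of a path in $\P(T,B)$ is immediately followed by a top contact and vice versa, so $\ell=t$ pointwise, and the pair $(b,\ell)$ is literally the pair $(b,t)$; Corollary~\ref{cor:i+j}(iii) then applies because the last east step of $B$ sits at height $0$ below the first east step of $T$ at height $1$. Reflecting back gives the other region. Either way the forward implication is now established, which the paper did not do.

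Your reductions for necessity are also sound: reading off $(b,\ell)$ on the boundary paths $B$ and $T$ forces $x=y=:n$, forces the forbidden region $b+\ell>n$ to be empty, forces a unique path at each $(i,n-i)$, and forces the corner steps of $T$ and $B$. But, as you say yourself, the remaining step — converting these rigidity constraints into the precise conclusion $T=N^nE^n$, $B=(EN)^n$ (or its transpose) — is genuinely missing, and your inductive peeling idea is only a sketch. The target statement ``depends only on $i+j$'' is not equivalent to the symmetry statements furnished by Theorems~\ref{thm:bottom-top} and~\ref{thm:bottom-left-top-right}, and Corollary~\ref{cor:i+j} classifies the pair $(t,b)$ rather than $(b,\ell)$, so no transfer is available off the shelf; this mismatch is exactly why the authors left it as a conjecture. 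In short: the ``if'' direction of the conjecture is now proved by your argument; the ``only if'' direction remains open, and you have correctly identified where the difficulty lies.
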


We checked the above two conjectures for all pairs of paths $T$ and $B$ from the origin to
$(n,n)$ for $n\leq 6$.

\subsection{Patterns in permutations}

We now describe an application of
Theorem~\ref{thm:bottom-top-refined} to restricted
permutations. Let $\S_n$ denote the set of permutations of $[n]$.

\begin{definition}
  Let $\pi\in \S_n$. We say that $\pi(i)$ is a \Dfn{right-to-left
    minimum} (\Dfn{right-to-left maximum}) of $\pi$ if $\pi(i) <
  \pi(j)$ (respectively, $\pi(i)>\pi(j)$) for all $j>i$.
  For $1<i<n$, we say that $\pi$ has an \Dfn{occurrence of the (dashed) pattern $13\mn2$} at position $i$ if there is a $j>i+1$ such that $\pi(i) <
  \pi(j) < \pi(i+1)$.
\end{definition}

For example, the permutation $35681742$ has occurrences of $13\mn2$ at positions $1$, $3$, and $5$. The right-to-left minima of this permutation are $1,2$, and its right-to-left maxima are $8,7,4,2$.

\begin{prop}\label{prop:permutations}
  The set of permutations in $\S_n$ with $e$ right-to-left minima, $f$ right-to-left maxima, and having occurrences of the pattern $13\mn2$ exactly at
  positions $\Set D$ is in bijection with the set of
  paths in $\PP(T, B, \Set D)$ with $e$ top contacts and $f$ bottom contacts, where $T=N^nE^n$ and $B=(EN)^n$.
\end{prop}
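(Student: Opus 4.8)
The plan is to set up a bijection between permutations $\pi\in\S_n$ and lattice paths in $\PP(T,B)$ with $T=N^nE^n$ and $B=(EN)^n$, by reading $\pi$ from right to left and recording, for each value, how it relates to the values already seen. Concretely, I would process the positions $i=n,n-1,\dots,1$ in decreasing order, maintaining the set of values $\pi(i),\pi(i+1),\dots,\pi(n)$ already read; when we read $\pi(i)$, it splits this set into the values below it and the values above it. I would encode the insertion of $\pi(i)$ by a single east step whose $y$-coordinate records the number of previously-read values that are smaller than $\pi(i)$, interleaved with north steps that advance through the ``slots''. Because $B=(EN)^n$ forces the path to stay weakly above the staircase and $T=N^nE^n$ forces it to stay weakly below, the resulting object lands in $\PP(T,B)$; the key point is that south steps are exactly needed when reading $\pi(i)$ produces an inversion that must be ``undone'' relative to the sorted order, and a short check shows these occur at $x$-coordinate $i$ precisely when there is an occurrence of $13\mn2$ at position $i$.

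The main steps, in order, would be: (1) define the encoding map $\pi\mapsto P$ explicitly as above and verify that $P$ is a valid path in $\PP(T,B)$ with the stated boundaries; (2) show that a right-to-left minimum of $\pi$ corresponds exactly to an east step of $P$ lying on $B$ (a bottom contact) — this should follow because $\pi(i)$ is a right-to-left minimum iff no previously-read value is smaller, iff the corresponding east step has $y$-coordinate matching the staircase, so $f=b(P)$; (3) show symmetrically that a right-to-left maximum corresponds to an east step lying on $T$ (a top contact), so $e=t(P)$; (4) show that the descent set of $P$, i.e.\ the set of $x$-coordinates carrying south steps, is exactly the set of positions $i$ with an occurrence of $13\mn2$; and (5) construct the inverse map, recovering $\pi$ from $P$ by reading the $y$-coordinates of the east steps and reinserting values into the appropriate slots, thereby confirming bijectivity.

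I would expect step (4) — pinning down that the descent set equals the $13\mn2$ occurrence positions — to be the main obstacle, because it requires carefully translating the dashed-pattern condition ``$\exists\, j>i+1$ with $\pi(i)<\pi(j)<\pi(i+1)$'' into a statement about the relative $y$-coordinates of consecutive east steps of $P$ and why that forces a net downward step between them. The condition $\pi(i)<\pi(i+1)$ together with the existence of an intermediate value $\pi(j)$ read later means that when we read $\pi(i)$ (after already having read $\pi(i+1)$ and $\pi(j)$), the value $\pi(i)$ lands strictly below $\pi(i+1)$ in a slot that is ``out of order,'' which is exactly the combinatorial source of a south step in the standard insertion encoding; making this precise is a bookkeeping argument about the positions of values in the partially-read permutation. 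Steps (2) and (3) are comparatively routine once the encoding is fixed, and step (5) is a mechanical inversion. The upshot is that Proposition~\ref{prop:permutations}, combined with Theorem~\ref{thm:bottom-top-refined}, yields that right-to-left minima and right-to-left maxima are jointly symmetrically distributed on permutations with a prescribed set of $13\mn2$ occurrence positions.
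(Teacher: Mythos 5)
Your encoding idea is in essence the same one the paper uses: for each position $i$, record a rank-type statistic of $\pi(i)$ relative to the later values $\pi(i+1),\dots,\pi(n)$, plot it as the $y$-coordinate of an east step, and then argue that right-to-left extrema become boundary contacts and $13\mn2$ occurrences become descents. But as written the plan has two concrete problems.

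First, the normalization and orientation do not match the Proposition. You record, for position $i$, the number of $\pi(j)$ with $j>i$ and $\pi(j)<\pi(i)$, which is $p_i-1$ where $p_i$ is the rank of $\pi(i)$ among $\{\pi(i),\dots,\pi(n)\}$; the paper sets $y_i = n+1-p_i$. With your choice the east step for position $i$ has $y$-coordinate in $[0,n-i]$, so the path does not sit in $\PP(T,B)$ with $T=N^nE^n$: it lives in the $180^\circ$-rotated region. Correspondingly, your steps (2) and (3) assert right-to-left minima $\leftrightarrow$ bottom contacts and right-to-left maxima $\leftrightarrow$ top contacts, which is the reverse of the Proposition's claim, and the written identities ``$f=b(P)$'' and ``$e=t(P)$'' additionally swap the roles of $e$ and $f$ from the statement. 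None of this is fatal --- it is a change of convention --- but as described the map neither lands in the claimed region nor produces the claimed contact counts.

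Second, step (4) is the entire content of the Proposition and is not actually carried out. Your heuristic explains why a $13\mn2$ at $i$ should force $\pi(i)<\pi(i+1)$ and an ``out of order'' slot, but the pattern $13\mn2$ requires the existence of a witness $\pi(j)$ with $j>i+1$ strictly between $\pi(i)$ and $\pi(i+1)$, and your sketch never exhibits one; nor do you address the converse direction (that without a $13\mn2$ there is no south step). The paper's argument is a short but genuinely pointed three-way case analysis on $a=p_i$ versus $b=p_{i+1}$: when $a<b$, the $b$-th smallest of the common remaining set is a later entry sitting strictly between $\pi(i)=r_a$ and $\pi(i+1)=r_{b+1}$ and furnishes the required witness; when $a>b$ one gets $\pi(i)>\pi(i+1)$; when $a=b$ nothing lies in between. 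That comparison is clean precisely because, reading left to right, $p_i$ and $p_{i+1}$ are ranks in sets that differ only by $\{\pi(i)\}$. Your right-to-left reading computes the two ranks against sets differing by $\{\pi(i+1)\}$ instead, which is workable but introduces extra index-shifting that your sketch does not confront. Filling in these two gaps would essentially reproduce the paper's proof.
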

\begin{proof}
  Given a path $P\in\PP(T, B, \Set D)$, let $y_1,y_2,\dots,y_n$ be the sequence of $y$-coordinates of its east steps from left to right, and let
  $p_i=n+1-y_i$ for $1\le i\le n$.
  We associate to $P$ a permutation $\pi\in\S_n$ as follows: $\pi(1)=p_1$ and, for each $i$ from $2$ to $n$, let $\pi(i)$ be the $p_i$-th smallest
  number in $[n]\setminus\{\pi(1),\pi(2),\dots,\pi(i-1)\}$. An example is drawn in Figure~\ref{fig:perm2path}.

  With this definition, the $i$-th east step of $P$ is a top contact if $p_i=1$, which happens if and only if $\pi(i)$ is the smallest
  number in $\{\pi(i),\pi(i+1),\dots,\pi(n)\}$, and hence a right-to-left minimum.  Similarly,
  the $i$-th east step of $P$ is a bottom contact if and only if $\pi(i)$ is a right-to-left maximum.

  Next we prove that descents in $P$ correspond to occurrences of the pattern
  $13\mn2$ in $\pi$. Fix $i$, and suppose that the elements in $[n]\setminus\{\pi(1),\pi(2),\dots,\pi(i-1)\}$, when listed in increasing order,
  are $r_1<r_2<\dots<r_{n-i}$. Let $a=p_i$, $b=p_{i+1}$, and note that $\pi(i)=r_a$ by definition.
  Clearly, $P$ has a descent in position $i$ if and only if $y_i>y_{i+1}$, which is equivalent to $a<b$.
  In this case, $\pi(i+1)=r_{b+1}$, and there is $j>i$ such that $\pi(j)=r_b$, so that $\pi(i)\pi(i+1)\pi(j)$ is an occurrence of $13\mn2$.
  On the other hand, if $P$ has no descent in position $i$, there are two possibilities.
  If $a>b$, then $\pi(i+1)=r_b < r_a =\pi(i)$; if $a=b$, then $\pi(i+1)=r_{a+1}>r_a=\pi(i)$, but there is no $j>i$ with
  $\pi(i)<\pi(j)<\pi(i+1)$. In both cases, $\pi$ has no occurrence of $13\mn2$ at position $i$.
\end{proof}

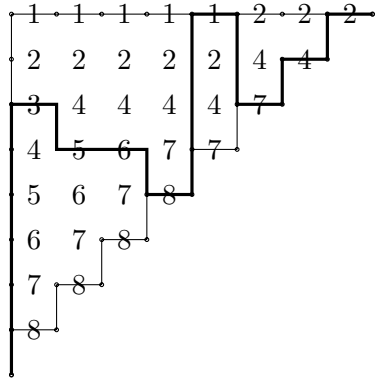
\begin{figure}[htb]
  \begin{center}
    \begin{tikzpicture}[scale=0.6]
        \coordinate (origin) at (0,0);
        \draw (origin) circle(1.2pt) \N\E\N\E\N\E\N\E\N\E\N\E\N\E\N\E;
        \draw (origin) circle(1.2pt) \N\N\N\N\N\N\N\N\E\E\E\E\E\E\E\E;
        \draw[very thick] (origin)
        -- ++(0,6) -- ++(1,0) -- ++(0,-1) -- ++(2,0) -- ++(0,-1) -- ++(1,0) -- ++(0,4) -- ++(1,0)  -- ++(0,-2) -- ++(1,0) -- ++(0,1) -- ++(1,0) -- ++(0,1) -- ++(1,0);
        \draw (0.5,1) node {$8$};
        \draw (0.5,2) node {$7$};
        \draw (0.5,3) node {$6$};
        \draw (0.5,4) node {$5$};
        \draw (0.5,5) node {$4$};
        \draw (0.5,6) node {$3$};
        \draw (0.5,7) node {$2$};
        \draw (0.5,8) node {$1$};
        \draw (1.5,2) node {$8$};
        \draw (1.5,3) node {$7$};
        \draw (1.5,4) node {$6$};
        \draw (1.5,5) node {$5$};
        \draw (1.5,6) node {$4$};
        \draw (1.5,7) node {$2$};
        \draw (1.5,8) node {$1$};
        \draw (2.5,3) node {$8$};
        \draw (2.5,4) node {$7$};
        \draw (2.5,5) node {$6$};
        \draw (2.5,6) node {$4$};
        \draw (2.5,7) node {$2$};
        \draw (2.5,8) node {$1$};
        \draw (3.5,4) node {$8$};
        \draw (3.5,5) node {$7$};
        \draw (3.5,6) node {$4$};
        \draw (3.5,7) node {$2$};
        \draw (3.5,8) node {$1$};
        \draw (4.5,5) node {$7$};
        \draw (4.5,6) node {$4$};
        \draw (4.5,7) node {$2$};
        \draw (4.5,8) node {$1$};
        \draw (5.5,6) node {$7$};
        \draw (5.5,7) node {$4$};
        \draw (5.5,8) node {$2$};
        \draw (6.5,7) node {$4$};
        \draw (6.5,8) node {$2$};
        \draw (7.5,8) node {$2$};
    \end{tikzpicture}
  \end{center}
  \caption{The path corresponding to the permutation $35681742$.} \label{fig:perm2path}
\end{figure}

\begin{cor}
Let $\Set D\subseteq[n-1]$. In the set of permutations $\pi\in\S_n$ having occurrences of $1\mn32$ exactly at positions $\Set D$, the joint distribution of the statistics
`number of right-to-left minima' and `number of right-to-left maxima' is symmetric.
\end{cor}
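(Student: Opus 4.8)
The plan is to reduce the statement to Theorem~\ref{thm:bottom-top-refined} through Proposition~\ref{prop:permutations}. Observe first that Proposition~\ref{prop:permutations} already yields the \emph{analogous} statement for the pattern $13\mn2$: taking $T=N^nE^n$ and $B=(EN)^n$, it sets up a bijection between the permutations in $\S_n$ with $e$ right-to-left minima, $f$ right-to-left maxima, and occurrences of $13\mn2$ exactly at positions $\Set D$, and the paths in $\PP(T,B,\Set D)$ with $t=e$ and $b=f$. Since $\PP(T,B,\Set D)=\bigsqcup_{\Mat H}\PP(T,B,\Set D,\Mat H)$, Theorem~\ref{thm:bottom-top-refined} shows that $(t,b)$ is symmetric over $\PP(T,B,\Set D)$, and transporting this back along the bijection shows that the pair (number of right-to-left minima, number of right-to-left maxima) is symmetric on each class of permutations having occurrences of $13\mn2$ at a fixed set of positions. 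So the only remaining point is to pass from $13\mn2$ to $1\mn32$; this is not automatic, since $1\mn32$ does not lie in the orbit of $13\mn2$ under reverse and complement, and hence no symmetry of the permutation matrix does the transfer while respecting right-to-left minima and maxima.

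What I would do is produce a genuinely new correspondence, in either of two equivalent forms. One option is to redo the construction of Proposition~\ref{prop:permutations} with the modifications that fit the pattern $1\mn32$ --- for instance scanning the positions of $\pi$ from right to left, or recording left-to-right minima in place of the terminal data used there --- so as to get a bijection between the permutations with $e$ right-to-left minima, $f$ right-to-left maxima and occurrences of $1\mn32$ exactly at positions $\Set D$, and the paths in $\PP(T,B,\Set D')$ with $\{t,b\}=\{e,f\}$, where $\Set D'$ is a fixed relabeling of $\Set D$ (presumably the reflection $\{n-d:d\in\Set D\}$). The equivalent option is to build a bijection $\beta\colon\S_n\to\S_n$ that carries the class ``$1\mn32$ occurs exactly at $\Set D$'' onto the class ``$13\mn2$ occurs exactly at $\Set D'$'' and that either preserves or interchanges the statistics ``number of right-to-left minima'' and ``number of right-to-left maxima''; composing $\beta$ with the symmetry just obtained then finishes the proof, an interchange of the two coordinates being harmless for symmetry. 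Either way, the corollary drops out of Theorem~\ref{thm:bottom-top-refined} after summing over $\Mat H$.

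The step I expect to be the main obstacle is the pattern bookkeeping for this new correspondence: one must check that a descent of the encoding path at position $i$ records an occurrence of $1\mn32$ and that a non-descent records the absence of such an occurrence, and simultaneously that right-to-left minima and maxima still correspond to top and bottom contacts (possibly with the two exchanged). This is the analogue of the final paragraph of the proof of Proposition~\ref{prop:permutations}, but now the ``$1$'' of an occurrence lies to the \emph{left} of the vincular pair rather than the ``$2$'' lying to its right, so the case analysis in terms of the insertion ranks $p_i$ and $p_{i+1}$ must be carried out afresh, and one has to verify that the index of each occurrence falls in the reflected set $\Set D'$. Once this is done, the symmetry of the distribution follows exactly as in the $13\mn2$ case.
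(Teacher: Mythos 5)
Your first paragraph already contains the paper's entire argument, and it is correct: Proposition~\ref{prop:permutations} carries permutations with $13\mn2$-occurrences exactly at $\Set D$ to paths in $\PP(T,B,\Set D)$, translating right-to-left minima and maxima into top and bottom contacts, and the symmetry of $(t,b)$ over $\PP(T,B,\Set D)$ follows from Theorem~\ref{thm:bottom-top-refined} by summing over all $\Mat H$. The Corollary is stated without proof in the paper precisely because it is meant to be an immediate consequence of these two facts.

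Everything you write after that first paragraph addresses the apparent change of pattern from $13\mn2$ in the Proposition to $1\mn32$ in the Corollary. You are right that these two vincular patterns are genuinely different and are not related by reverse or complement, so no symmetry of the permutation diagram converts one class into the other while respecting right-to-left extrema. But the correct resolution is that the Corollary's ``$1\mn32$'' is a typographical slip for ``$13\mn2$'': the Corollary is placed directly after Proposition~\ref{prop:permutations} with no intervening lemma, the Proposition's proof explicitly ties descents of the encoding path to the condition $\pi(i)<\pi(j)<\pi(i+1)$ with $j>i+1$ (which is $13\mn2$), and nothing else in the section could bridge the two patterns. Your proposed workarounds --- a fresh right-to-left encoding or a statistics-respecting pattern-transfer bijection $\beta$ --- are therefore unnecessary, and as written they also do not constitute a proof: both routes are left at the level of a plan, with the crucial bookkeeping (the descent-to-occurrence dictionary for $1\mn32$ and the contact-to-extremum dictionary) explicitly deferred rather than carried out.
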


\subsection{Watermelon configurations}\label{sec:watermelon}

As a consequence of Theorem~\ref{thm:bottom-top-k}, we can recover a theorem of
Brak and Essam~\cite[Corollary 1]{BrakEssam2001} concerning certain families of $k$ non-intersecting paths called
\Dfn{watermelon configurations}.
\begin{definition}
  A \Dfn{watermelon configuration} of \Dfn{length} $x$ and
  \Dfn{deviation} $y$ is a family of $k$ non-intersecting lattice
  paths with northeast $(1,1)$ and southeast $(1,-1)$ steps,
  starting at $(0,2i)$ and terminating at $(x, y+2i)$ for
  $0\le i\le k-1$, not going below the $x$-axis.
\end{definition}
Brak and Essam derive the following statement using manipulations of a determinant.
\begin{theorem}[\cite{BrakEssam2001}]
  The number of watermelon configurations of length $x$ and
  deviation $y$ whose bottom path has $e$
  returns to the $x$-axis is the
  same as the number of families of $k$ non-intersecting paths where the
  lower $k-1$ paths form a watermelon configuration of length $x$ and
  deviation $y$, and the top path terminates at $(x-e-1, y+2k+e-3)$.
\end{theorem}
Christian Krattenthaler~\cite[Proposition 6]{Krattenthaler2006a}
gives a bijective proof by transforming the configurations into
certain semistandard Young tableaux and applying a variant of {\it jeu de
taquin}.  However, a more straightforward proof can be given by interpreting it as a special case of
Theorem~\ref{thm:bottom-top-k}.
\begin{proof}
  Any watermelon configuration of length $x$ and deviation $y$ can be
  transformed into a family $\Mat P$ of paths in $\P^k(T, B)$ with $T=N^{(x+y)/2} E^{(x-y)/2}$ and
  $B=(N E)^{(x-y)/2} N^y$, by letting all paths start at the origin and
  converting each northeast (southeast) step of a path in the
  watermelon configuration to a north (respectively east) step.

  Doing so, the returns to the $x$-axis of the bottom path of the watermelon
  configuration become the bottom contacts of the lower path in $\Mat P$, which are counted by $b(\Mat P)$.
  The proof of Theorem~\ref{thm:bottom-top-k} (or alternatively, Theorem~\ref{thm:bottom-left-top-right-k}) gives a bijection between tuples $\Mat P\in\P^k(T, B)$ with $b(\Mat P)=e$ and
  tuples $\Mat P\in\P^k(T, B)$ with $t(\Mat P)=e$.
  In the latter tuples, the upper path must have its $e$ top contacts at the end, so it is a path from the origin to
  $\big((x-y)/2-e, (x+y)/2-1\big)$, followed by the steps $NE^e$. Removing these forced steps from the corresponding watermelon configuration,
  we obtain a family as described in the statement.
\end{proof}
We remark that our bijection is different from Krattenthaler's.  One
might, however, speculate about a connection between {\it jeu de taquin}
and our theorem.

Even though Corollary~\ref{cor:easy-bottom} does not seem to
generalize nicely to families of paths, Krattenthaler gives
an explicit expression~\cite[Lemma~7]{Krattenthaler2006a} for the number of watermelon
configurations with a given number of bottom contacts, based on the
theorem above.  Via the Lindstr\"om--Gessel--Viennot method for
non-intersecting lattice paths, this amounts to a determinant
evaluation. It should also be possible to give explicit expressions
for the number of families of paths when the lower boundary has
arbitrary integer slope.

\section{Flagged semistandard Young tableaux and $k$-triangulations}\label{sec:SSYT}

In this section we discuss connections of
Theorem~\ref{thm:bottom-top-k}, which concerns $k$-tuples of non-crossing lattice paths, with two other combinatorial objects: flagged semistandard Young tableaux, and generalized triangulations
of a convex polygon.

A \Dfn{$k$-triangulation} of a convex $n$-gon is a maximal set of diagonals
such that no $k+1$ of them mutually cross.  In particular, a
$1$-triangulation is a triangulation in the usual sense.
Note that every $k$-triangulation contains all the diagonals between vertices at
distance $k$ or less, where distance is the number of sides of the polygon that separate the two vertices.
We call these \Dfn{trivial} diagonals, and we disregard them when computing the degree of a
vertex. The \Dfn{neighbors} of a vertex are then the vertices connected to it by a nontrivial diagonal.

It was shown in~\cite{MR1931939,MR1756126} that all $k$-triangulations of a convex $n$-gon have
the same number $k(n-2k-1)$ of nontrivial diagonals. Jonsson~\cite{Jonsson2005} proved that the
number of $k$-triangulations of an $n$-gon is given by the $k\times
k$ determinant of Catalan numbers $\det(C_{n-i-j})_{i,j=1}^k$.
Denoting by $\D_n^k$ the set $\P^k(T,B)$ where $T=N^{n-2k-1} E^{n-2k-1}$ and $B=(E N)^{n-2k-1}$, the above determinant is
also known to equal $|\D_n^k|$, by the Lindstr\"om--Gessel--Viennot method~\cite{GesselViennot1985}.
The elements of $\D_n^k$ are $k$-fans of Dyck paths of semilength $n-2k$ (or, in the terminology from Section~\ref{sec:watermelon},
watermelon configurations with deviation~$0$).
There are several well-known bijections between $1$-triangulations and Dyck paths.
For $k=2$, the first bijection between $2$-triangulations and pairs of non-crossing Dyck paths
was given in~\cite{Elizalde2006}. For general $k$, a bijection between $k$-triangulations of the $n$-gon and $\D_n^k$ has been recently found by 
Serrano and Stump~\cite{SerranoStump2010}.

Nicol\'as~\cite[Conjecture~1]{Nicolas2009} discovered
experimentally that the distribution of degrees of $k$ consecutive
vertices over the set of $k$-triangulations of a convex $n$-gon equals the
distribution of $(h_0,h_1,\dots,h_{k-1})$ over $\D_n^k$. This is stated as
Conjecture~\ref{conj:carlos} above.
Serrano and Stump's bijection~\cite[Theorem~4.4]{SerranoStump2010} proves
a special case of this conjecture: the degree of any given vertex in the set of
$k$-triangulations of a convex $n$-gon is equidistributed with the
number of top contacts of the upper path in $\D_n^k$.

To prove Conjecture~\ref{conj:carlos} in full generality, we construct a new bijection between tuples of paths
and certain semistandard Young tableaux. In the rest of this section, let $T=N^y E^x$ and let $B$ be a path from the origin to $(x,y)$, weakly below $T$ and ending with a north step.
The region enclosed by $T$ and $B$ can then be interpreted in an
obvious way as the Young diagram (in English notation) of a partition
$\lambda=\lambda(T,B)$. The parts of $\lambda$ are the lengths of the rows of
the diagram, from top to bottom, so that $x=\lambda_1\ge\lambda_2\ge\dots\ge\lambda_y\ge0$.

Recall that a filling of such a diagram with positive integer entries is
called a \Dfn{Young tableau of shape $\lambda$}, and that such a
tableau is called \Dfn{semistandard} (a \Dfn{SSYT} for short) if the entries in its rows are weakly
increasing and the entries in its columns are strictly increasing.  A
Young tableau (or simply tableau, from now on) is called \Dfn{$k$-flagged} if the entries of row $r$
are at most $k+r$ for every $r$.  Finally, the \Dfn{weight} of a
tableau is $\mu=(\mu_1,\mu_2,\dots)$ where $\mu_i$ is the
number of entries equal to $i$.

As in Section~\ref{sec:path-tuple}, given $\Mat
P=(P_1,P_2,\dots,P_k)\in\P^k(T, B)$, with the convention that $P_0 =
T$ and $P_{k+1} = B$, let $h_i(\Mat P)$, for $0\le i\le k$, be the
number of east steps where $P_i$ and $P_{i+1}$ coincide.  Also, let
$u_s(\Mat P)$, for $1\le s\le y-1$, be the number of east steps with
$y$-coordinate $y-s$ that lie strictly between $T$ and $B$ and are
not used by any of the paths $P_1,\dots,P_{k}$.

We can now state the main result of this section.
\begin{theorem}\label{thm:carlos}
  Let $T=N^y E^x$, and let $B$ be any path from the origin to $(x,y)$,
  weakly below $T$ and ending with a
  north step.  There is an explicit bijection $\bij$ between
  $k$-tuples of paths $\Mat P\in\P^k(T, B)$ with $h_i(\Mat P)=h_i$
  ($0\le i\le k$) and $u_s(\Mat P)=u_s$ ($1\le s<y$), and $k$-flagged
  SSYT of shape $\lambda=\lambda(T,B)$ and weight
  $$
  (\lambda_1-h_0, \lambda_1-h_1, \dots, \lambda_1-h_k,\,u_1, u_2,
  \dots, u_{y-1}).
  $$
\end{theorem}

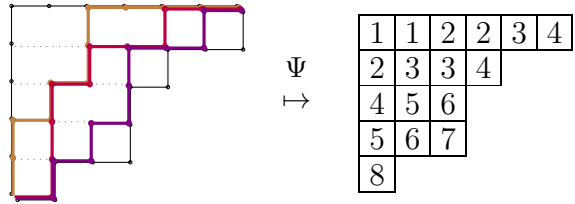
\begin{figure}[htb]
  \begin{center}
\begin{tikzpicture}[scale=0.5]
 \draw (-0.08,0.08) circle(1.2pt) \N\N\N\N\N\E\E\E\E\E\E;
 \draw (0.08,-0.08) circle(1.2pt) \E\N\E\E\N\N\E\N\E\E\N;
 \draw[very thick,brown] (-0.04,0.04) \n\n\e\n\e\n\n\e\e\e\e;
 \draw[very thick,purple] (0,0) \e\n\n\n\e\n\e\e\n\e\e;
 \draw[very thick,violet] (0.04,-0.04) \e\n\e\n\e\n\n\e\e\n\e;
 \draw[dotted,gray] (0,4)--(2,4);
 \draw[dotted,gray] (0,3)--(1,3);
 \draw[dotted,gray] (2,3)--(3,3);
 \draw[dotted,gray] (1,2)--(2,2);
 \draw[dotted,gray] (0,1)--(1,1);
 \draw (7.5,2.5) node [label=90:$\bij$] {$\mapsto$};
 \draw (12,2.5) node {\large\young(112234,2334,456,567,8)};
\end{tikzpicture}
\end{center}
  \caption{An example of the weight-preserving bijection $\bij$ from Theorem~\ref{thm:carlos}.
  The $3$-tuple on the left has $(h_0,h_1,h_2,h_3)=(4,3,3,3)$ and $(u_1,u_2,u_3,u_4)=(2,2,1,1)$.
  The $3$-flagged SSYT on the right has $\lambda_1=6$ and weight $(2,3,3,3,\,2,2,1,1)$.
  }\label{fig:bijectionSSYT}
\end{figure}

Figure~\ref{fig:bijectionSSYT} shows an example of the bijection $\bij$,
which will be described in Section~\ref{sec:proofcarlos}. The importance of $\bij$ lies in the particular way
that the statistics on tuples of paths determine the weight of the corresponding tableau. This property is what allows us in Section~\ref{sec:consequences} to use $\bij$ to prove and generalize Conjecture~\ref{conj:carlos}.
In fact, a much easier bijection between $k$-tuples of paths in $\P^k(T, B)$ and $k$-flagged SSYT of shape $\lambda(T,B)$ can be constructed as follows.
Given $\Mat P\in\P^k(T, B)$, first fill each cell of the Young diagram with the number of paths in $\Mat P$ that pass above the cell (this produces a reverse plane partition where all entries are at most $k$).
Then, for each row $r$, add $r$ to every entry in that row. Figure~\ref{fig:easybij} shows an example of this construction.
This bijection, which is used by Serrano and Stump~\cite{SerranoStump2010}, does not have the property described in Theorem~\ref{thm:carlos}, but it proves the following fact.

\begin{lemma}[\cite{SerranoStump2010}]\label{lem:easybij}
Let $T=N^y E^x$, and let $B$ an arbitrary path from the origin to $(x,y)$.
Then $|\P^k(T, B)|$ equals the number of $k$-flagged SSYT of shape $\lambda(T,B)$.
\end{lemma}

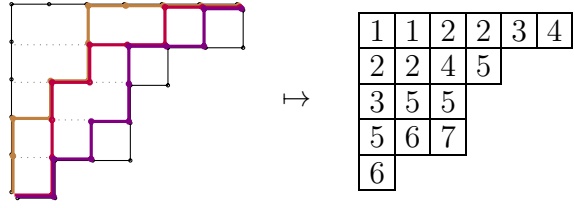
\begin{figure}[htb]
  \begin{center}
\begin{tikzpicture}[scale=0.5]
 \draw (-0.08,0.08) circle(1.2pt) \N\N\N\N\N\E\E\E\E\E\E;
 \draw (0.08,-0.08) circle(1.2pt) \E\N\E\E\N\N\E\N\E\E\N;
 \draw[very thick,brown] (-0.04,0.04) \n\n\e\n\e\n\n\e\e\e\e;
 \draw[very thick,purple] (0,0) \e\n\n\n\e\n\e\e\n\e\e;
 \draw[very thick,violet] (0.04,-0.04) \e\n\e\n\e\n\n\e\e\n\e;
 \draw[dotted,gray] (0,4)--(2,4);
 \draw[dotted,gray] (0,3)--(1,3);
 \draw[dotted,gray] (2,3)--(3,3);
 \draw[dotted,gray] (1,2)--(2,2);
 \draw[dotted,gray] (0,1)--(1,1);
 \draw (7.5,2.5) node {$\mapsto$};
 \draw (12,2.5) node {\large\young(112234,2245,355,567,6)};
\end{tikzpicture}
\end{center}
  \caption{An example of the non-weight-preserving bijection that proves Lemma~\ref{lem:easybij}.}\label{fig:easybij}
\end{figure}

It may be surprising that the bijection $\bij$ only works for the special
top boundary of the region as stated in Theorem~\ref{thm:carlos}.  Indeed, one might be led to
believe that paths in an arbitrary region should be in bijection with
a suitable set of skew-semistandard Young tableaux.
We remark that our proof breaks in a subtle way in this setting.
It would be interesting to find such a generalization.

\subsection{Consequences}\label{sec:consequences}
Before we prove Theorem~\ref{thm:carlos}, let us first show how to infer Conjecture~\ref{conj:carlos}
from it.  To do so, we use a result of Serrano and Stump~\cite{SerranoStump2010}.
To simplify the statements of Theorems~\ref{thm:Serrano-Stump} and~\ref{thm:degrees}, we change the labeling of the vertices of the polygon, so that our vertex $i$ corresponds to
vertex $n+1-i$ in the notation of~\cite{SerranoStump2010}. In the next two theorems, $\T_n^k$ denotes the set of $k$-triangulations of a convex $n$-gon with vertices counterclockwise labeled from $1$ to $n$.
In such a $k$-triangulation, $d_i$ denotes the number of (nontrivial) neighbors of vertex $i$ among $i+1,i+2,\dots,n$.
Note that for $i\ge n-k$ we have $d_i=0$, and that for $1\le i\le k+1$, $d_i$ is the degree of vertex $i$, since all its neighbors are among $i+1,i+2,\dots,n$.
We call $(d_1,\dots,d_{n-k-1})$ the \Dfn{degree sequence} of the $k$-triangulation.
The next theorem is equivalent to~\cite[Corollary~3.5]{SerranoStump2010}. The description of the weight of the $k$-flagged tableau is not given explicitly in~\cite{SerranoStump2010},
but it is immediate from the Edelman-Greene insertion process that is used.

\begin{theorem}[\cite{SerranoStump2010}]\label{thm:Serrano-Stump}
 There is an explicit bijection between $k$-triangulations in $\T_n^k$ with degree sequence $(d_1,\dots,d_{n-k-1})$ and $k$-flagged SSYT of shape $(n-2k-1,n-2k-2,\dots,1)$ with weight $(\mu_1,\mu_2,\dots,\mu_{n-k-1})$, where
  \begin{equation*}
    \mu_i=
    \begin{cases}
      n-2k-1-d_i &\text{if }1\leq i\leq k+1,\\
      n-k-i-d_i &\text{if }k+1< i\leq n-k-1.
    \end{cases}
  \end{equation*}
\end{theorem}

Combining Theorems~\ref{thm:carlos} and~\ref{thm:Serrano-Stump}, we obtain the following
refinement of Conjecture~\ref{conj:carlos}. Note that this refinement gives a simple description of the distribution of the degrees $d_1,d_{2},\dots,d_{k+1}$ of $k+1$ consecutive vertices,
and not just $k$ as in the original conjecture. In particular, it implies that the joint distribution of $(d_1,d_{2},\dots,d_{k+1})$ over $\T_n^k$ is symmetric.

\begin{theorem}\label{thm:degrees}
  There is an explicit bijection between $k$-triangulations in $\T_n^k$ with degree sequence $(d_1,\dots,d_{n-k-1})$,
  and $k$-tuples in $\D_n^k$
  with parameters $(h_0,h_1,\dots,h_k)$ and $(u_1,u_2,\dots,u_{n-2k-2})$, where
$$
  d_{i}=
  \begin{cases} h_{i-1} &\text{if } 1\leq i\leq k+1,\\
  n-i-k-u_{i-k-1}& \text{if }k+1<i\le n-k-1.
  \end{cases}
 $$
\end{theorem}

\begin{proof}
The bijection from Theorem~\ref{thm:Serrano-Stump} sends $k$-triangulations of the $n$-gon to $k$-flagged SSYT of shape
$(n-2k-1,n-2k-2,\dots,1)$, which in turn become $k$-tuples in $\D_n^k$ applying the bijection $\bij$ from Theorem~\ref{thm:carlos}.
Comparing the weight of the tableau in both bijections, we see that for $1\leq i\leq k+1$, we have $n-2k-1-d_{i}=\mu_i=\lambda_1-h_{i-1}$, and since $\lambda_1=n-2k-1$, it follows that the degree of vertex $i$ is $d_{i}=h_{i-1}$. Similarly, for $k+1<i\le n-k-1$, we have $n-k-i-d_{i}=\mu_i=u_{i-k-1}$.
\end{proof}

Figure~\ref{fig:triang} shows an example of the composition of bijections in Theorem~\ref{thm:degrees}. In this example, $(d_1,\dots,d_5)=(1,2,2,0,1)$, $(\mu_1,\dots,\mu_5)=(2,1,1,2,0)$, $\lambda_1=3$, $(h_0,h_1,h_2)=(1,2,2)$, and $(u_1,u_2)=(0,1)$.

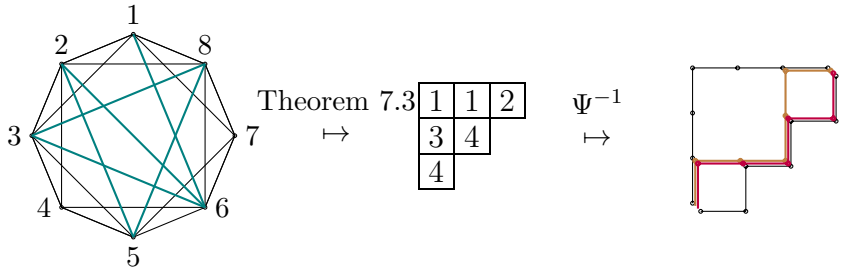
\begin{figure}[htb]
  \begin{center}
\begin{tabular}{cc}
\begin{tikzpicture}[scale=0.45]
      \draw (0,-3) coordinate(d4)
      -- (-2.12,-2.12) coordinate(d5)
      -- (-3,0) coordinate(d6)
      -- (-2.12,2.12) coordinate(d7)
      -- (0,3) coordinate(d8)
      -- (2.12,2.12) coordinate(d1)
      -- (3,0) coordinate(d2)
      -- (2.12,-2.12) coordinate(d3);
      -- (0,-3);
      \foreach \x in {1,...,8} {
        \draw (d\x) circle (1.5pt);
      }
      \draw (d1)--(d2); \draw (d2)--(d3); \draw (d3)--(d4); \draw (d4)--(d5); \draw (d5)--(d6); \draw (d6)--(d7); \draw (d7)--(d8); \draw (d8)--(d1);
      \draw (d1)--(d3); \draw (d2)--(d4); \draw (d3)--(d5); \draw (d4)--(d6); \draw (d5)--(d7); \draw (d6)--(d8); \draw (d7)--(d1); \draw (d8)--(d2);
      \draw[thick,teal] (d1)--(d4); \draw[thick,teal] (d1)--(d6); \draw[thick,teal] (d3)--(d6); \draw[thick,teal] (d3)--(d7); \draw[thick,teal] (d3)--(d8); \draw[thick,teal] (d4)--(d7);
      \draw (d8) node[above] {$1$};
      \draw (d7) node[above] {$2$};
      \draw (d6) node[left] {$3$};
      \draw (d5) node[left] {$4$};
      \draw (d4) node[below] {$5$};
      \draw (d3) node[right] {$6$};
      \draw (d2) node[right] {$7$};
      \draw (d1) node[above] {$8$};
      \draw (6,0) node [label=90:Theorem~\ref{thm:Serrano-Stump}]  {$\mapsto$};
      \draw (10,0) node {\large\young(112,34,4)};
\end{tikzpicture}
&
\begin{tikzpicture}[scale=0.6]
\draw (-2.2,1.5) node [label=90:$\bij^{-1}$]  {$\mapsto$};
 \draw (-0.09,0.09) circle(1.2pt) \N\N\N\E\E\E;
 \draw (0.09,-0.09) circle(1.2pt) \E\N\E\N\E\N;
 \draw[thick,brown] (-0.03,0.03) \N\E\E\N\N\E;
 \draw[thick,purple] (0.03,-0.03) \N\E\E\N\E\N;
  \draw(0,-1.5) ;
\end{tikzpicture}
\end{tabular}
\end{center}
  \caption{An example of the bijection from Theorem~\ref{thm:degrees}.}\label{fig:triang}
\end{figure}

We remark that $k$-flagged SSYT appear
frequently in the literature because the generating functions
of all such tableaux of a given shape with respect to their weight,
called $k$-flagged Schur functions, are
Schubert polynomials for certain vexillary permutations~\cite{SerranoStump2010,Wachs1985}.  In particular, we have the
following determinantal expression due to Michelle
Wachs~\cite[Theorem~3.5]{Wachs1985}.  An alternative proof using the
determinantal formula for the number of non-intersecting lattice
paths was given by Ira Gessel and Xavier
Viennot~\cite[Corollary~4]{GesselViennot1989}.
\begin{prop}[\cite{Wachs1985}]
  The generating function for $k$-flagged SSYT $S$ of shape $\lambda$ is
  \begin{equation*}
    \sum_S \Mat x^S =
    \det\left(h_{\lambda_i-i+j}(x_1,\dots,x_{k+i})\right)_{1\leq
      i,j\leq \ell},
  \end{equation*}
  where $\Mat x^S = \prod_i x_i^{\mu_i}$ and $(\mu_1,\mu_2,\dots)$ is the weight of $S$,
  $\ell$ is the number of parts of $\lambda$,
  and $h_n(x_1,\dots,x_{k+i})=\sum_{1\le j_1\le\dots\le j_n \le k+i}x_{j_1}\dots x_{j_n}$ is the complete homogeneous symmetric function.
\end{prop}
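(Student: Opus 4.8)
The plan is to give the Lindström--Gessel--Viennot proof, encoding each \emph{row} of a $k$-flagged tableau as a lattice path, which is the approach attributed to Gessel and Viennot in the text just above. First I would fix coordinates. To a $k$-flagged SSYT $S$ of shape $\lambda$ with rows $\lambda_1\ge\lambda_2\ge\dots\ge\lambda_\ell$, associate the $\ell$-tuple $(\pi_1,\dots,\pi_\ell)$ of monotone lattice paths with unit north and east steps, where $\pi_i$ runs from $A_i=(1-i,\,1)$ to $B_i=(\lambda_i-i+1,\,k+i)$ and the $y$-coordinates of the successive east steps of $\pi_i$, read from left to right, are the entries $S_{i,1}\le S_{i,2}\le\dots\le S_{i,\lambda_i}$ of the $i$-th row of $S$. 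Then $\pi_i$ has exactly $\lambda_i$ east steps and the heights available to it are precisely $1,2,\dots,k+i$, so this records both the shape and the flag; moreover, weighting each east step at height $h$ by $x_h$, the product of all step-weights over the family is $\Mat x^S=\prod_i x_i^{\mu_i}$. The content shift built into the endpoints makes column-strictness of $S$ equivalent to the paths $\pi_1,\dots,\pi_\ell$ being pairwise vertex-disjoint: at each integer $x$-coordinate, the vertical run of $\pi_i$ lies strictly below that of $\pi_{i+1}$ exactly when $S_{i,j}<S_{i+1,j}$ for all $j$ (note $S_{i,1}\ge 1$ already forces $S_{i+1,1}\ge 2$, so $\pi_{i+1}$ leaves its start $A_{i+1}$ by a north step and misses $A_i$). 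Thus one obtains a weight-preserving bijection between $k$-flagged SSYT of shape $\lambda$ and vertex-disjoint families of paths $A_i\to B_i$.

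Next I would apply the Lindström--Gessel--Viennot lemma~\cite{GesselViennot1985}. The sources satisfy $x(A_\ell)<x(A_{\ell-1})<\dots<x(A_1)$ and the sinks $x(B_\ell)<x(B_{\ell-1})<\dots<x(B_1)$ while the $B_i$ strictly increase in height; since all paths use only north and east steps, the only permutation $\sigma$ admitting a vertex-disjoint family $A_i\to B_{\sigma(i)}$ is the identity. Hence the height-weighted generating function for vertex-disjoint families equals $\det\bigl(G(A_i,B_j)\bigr)_{1\le i,j\le\ell}$, where $G(A_i,B_j)$ is the weighted sum over all single monotone paths from $A_i$ to $B_j$. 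Such a path runs from height $1$ to height $k+j$ and has $\lambda_j-j+i$ east steps, whose heights form an arbitrary weakly increasing sequence in $\{1,\dots,k+j\}$; summing the weights gives $G(A_i,B_j)=h_{\lambda_j-j+i}(x_1,\dots,x_{k+j})$, with $h$ of negative degree equal to $0$ (consistent with no path existing). Combining the bijection with LGV yields $\sum_S\Mat x^S=\det\bigl(h_{\lambda_j-j+i}(x_1,\dots,x_{k+j})\bigr)_{i,j}$, and transposing the matrix (which leaves the determinant unchanged) produces the stated formula $\det\bigl(h_{\lambda_i-i+j}(x_1,\dots,x_{k+i})\bigr)_{1\le i,j\le\ell}$.

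The evaluation of $G(A_i,B_j)$ and the identity $\det M=\det M^{T}$ are routine and I would dispatch them in a sentence. The step needing the most care is the bijection, namely the equivalence ``column-strict $\iff$ vertex-disjoint'' together with the verification that the chosen coordinates force $\sigma=\mathrm{id}$ in LGV. What makes it work is precisely that the top endpoints $B_i$ lie at the strictly increasing heights $k+1<k+2<\dots<k+\ell$: a path $\pi_i$ arising from a column-strict filling never leaves the horizontal strip $1\le y\le k+i$, and this nesting of strips is exactly what the content-shift argument and the LGV ``only the identity survives'' argument both require; with an arbitrary top boundary the strips would cease to nest and the argument would break, paralleling the remark made after Theorem~\ref{thm:carlos}. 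Once this is established the proposition follows, and it may equally be read as the special case of the Gessel--Viennot determinant for the number of non-intersecting lattice paths mentioned in the text.
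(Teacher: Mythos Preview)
The paper does not supply its own proof of this proposition: it is quoted as a known result, attributed to Wachs~\cite[Theorem~3.5]{Wachs1985}, with the remark that ``an alternative proof using the determinantal formula for the number of non-intersecting lattice paths was given by Ira Gessel and Xavier Viennot~\cite[Corollary~4]{GesselViennot1989}.'' Your proposal is precisely a reconstruction of that Gessel--Viennot argument, and it is correct: the row-to-path encoding with endpoints $A_i=(1-i,1)$ and $B_i=(\lambda_i-i+1,k+i)$ is the standard one, the equivalence of column-strictness with vertex-disjointness follows from comparing the vertical segments of $\pi_i$ and $\pi_{i+1}$ at each abscissa, the single-path generating function $G(A_i,B_j)=h_{\lambda_j-j+i}(x_1,\dots,x_{k+j})$ is immediate, and the ``only the identity survives'' step goes through because the $A_i$ are strictly decreasing in $x$ while the $B_i$ are strictly decreasing in $x$ and strictly increasing in $y$. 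The final transposition of the determinant is exactly what aligns your formula with the statement. So there is nothing to compare against in the paper itself; your write-up simply fills in the proof that the paper delegates to the literature.
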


The above proposition implies that $k$-flagged Schur functions are symmetric in the variables $x_1,\dots,x_{k+1}$. Alternatively, this can be proved combinatorially in the same way
that one proves that Schur functions, defined as generating functions of SSYT, are symmetric~\cite[Theorem 7.10.2]{EC2}.
Using this symmetry, Theorem~\ref{thm:carlos} provides an alternative proof of Theorem~\ref{thm:bottom-top-k} in the case that $T=N^yE^x$.
Indeed, the first $k+1$ components of the weight have a symmetric joint distribution over $k$-flagged SSYT of a given shape $\lambda$, which, by Theorem~\ref{thm:carlos}, is the same distribution of $(\lambda_1-h_0,\dots,\lambda_1-h_k)$ over $\P^k(T,B)$, where $T$ and $B$ are the boundaries of the shape $\lambda$.  Theorem~\ref{thm:bottom-top-k} follows now immediately for such $T$ and $B$.

\subsection{Proof of Theorem~\ref{thm:carlos}}\label{sec:proofcarlos}
Throughout this section we assume that $T=N^yE^x$ and that $B$ is a path from the origin to $(x,y)$.
We have defined $k$-flagged tableaux without the requirement of
being semistandard because of a technical necessity that will become apparent
below. We also need a variation of
perforated tableaux as introduced by Georgia Benkart, Frank Sottile
and Jeffrey Stroomer~\cite{BenkartSottileStroomer1996}.
We say that an entry $e$ in a $k$-flagged tableau is \Dfn{small} (respectively \Dfn{large}) if $e\le k+1$ (respectively $e>k+1$).
An entry $e$ in row $r$ is called \Dfn{maximal} if $e=k+r$. Note that the entries in the first row of a $k$-flagged tableau are necessarily small.

\begin{definition}\label{dfn:kperflagged}
  A \Dfn{$k$-perflagged tableau} is a $k$-flagged Young tableau where any pair of entries $e_1,e_2$ with $e_2$ weakly southeast of $e_1$ satisfies the following conditions:
  \begin{itemize}
  \item If $e_1$ and $e_2$ are in the same row, and both are small or both are large, then $e_1\leq e_2$.
  \item If $e_1$ and $e_2$ are in different rows and both are large, then $e_1<e_2$.
  \item If $e_1$ and $e_2$ are in different rows and both are small, then $e_1\leq e_2$. Furthermore, if $e_1=e_2$, the following \Dfn{chain condition} must be met:

    Suppose that $e_1$ is in a cell $a$ in row $r$ and $e_2$ is in a cell
    $b$ in row $r+s$.  Then there is a sequence of cells $c_1,\dots,c_s$ (called a \Dfn{chain}) such that for $1\leq i\leq s$,
    cell $c_i$ is in row $r+i$ and weakly east of $c_{i-1}$ (with the convention that $c_0=a$), and the entry in $c_i$ is no larger
    than the entry in the cell just northeast of $c_i$.  Furthermore, $c_s$ is strictly west of $b$ (in the same row).
    In particular, $a$ and $b$ cannot be in the same column.
  \end{itemize}
\end{definition}

Intuitively, a $k$-flagged tableau is $k$-perflagged if, when restricting to large
(respectively small) entries, the semistandard condition (weakly increasing along rows and strictly increasing along columns) is met, with the caveat that a
small entry is allowed to be equal to another one strictly southeast of it in
some cases. When this happens, any small entries inside the minimal rectangle containing these two equal entries must also be equal to them.
For example, the $2$-perflagged tableau
$$\young(1111,2222,3444,4556,6663)$$
has two small entries equal to $3$ in different rows, one being southeast of the other. The chain condition is satisfied by taking $c_1$ to be the cell containing a $4$ in the first column,
and $c_2$ to be the cell containing a $6$ in the third column.

To prove Theorem~\ref{thm:carlos}, we construct a bijection $\bij$ in two parts.
First, we associate in a simple way a $k$-perflagged tableau
to each $k$-tuple of paths in $\P^k(T, B)$. This tableau has the weight claimed in Theorem~\ref{thm:carlos},
but in general it will not be semistandard.  Then, we modify the tableau using
local moves reminiscent of {\it jeu de taquin} to remove any occurring violation of the
semistandard property. It will be clear that the weight of the tableau is preserved by these operations.
However, proving that the operations are invertible and well defined will require some work.

\begin{definition}\label{def:tab}
  To each $k$-tuple of paths $\Mat P\in\P^k(T, B)$ we associate a
  Young tableau $\Tab(\Mat P)$ of shape $\lambda(T,B)$ as
  follows:
  \begin{enumerate}
  \item for each cell of the Young diagram whose upper boundary is an
    east step of a path $P_i$ (with $i\ge0$, where we again use the convention $P_0=T$), take the largest such $i$ and fill the cell with the number $i+1$;
  \item fill the remaining cells in each row $r$ with $k+r$.
  \end{enumerate}
\end{definition}

For an example of this construction, see the first step in
Figure~\ref{fig:exampletableaux}. Note that step~(i) fills the
cells with small entries, while step~(ii) fills the remaining cells with maximal
entries. It is clear that for any $\Mat P\in\P^k(T,
B)$, $\Tab(\Mat P)$ is a $k$-perflagged tableau. Indeed, rows are weakly increasing, and if $e_1$ is strictly north and weakly west of $e_2$ and entries $e_1,e_2$ are both large or both small, then $e_1<e_2$ (in particular, the chain condition is not needed).

\begin{definition}
  A \Dfn{semistandard violation} of a $k$-perflagged tableau $S$ is a cell
  containing an entry $e$ such that either
  \begin{itemize}
  \item there is a cell immediately above, whose entry $\ec$ satisfies $\ec\ge
    e$, or
  \item there is a cell immediately to the left, whose entry $\er$ satisfies
    $\er>e$.
  \end{itemize}
  The \Dfn{minimal semistandard violation} of $S$ is the one with the smallest
  entry, and among those with the same smallest entry, the one in the leftmost
  column.
\end{definition}
Note that, since $S$ is $k$-perflagged, a semistandard violation as defined
above can only happen if $e$ is a small entry, and either $\ec$ or $\er$ is
a large entry. In the $3$-perflagged tableau on the left of
Figure~\ref{fig:exampletableaux}, the four semistandard violations are marked by
diamonds, and the minimal one is the entry $3$ in the first column.

\begin{definition}
  A \Dfn{path violation} of a $k$-perflagged tableau $S$ is a cell containing a
  small entry $e$, such that either
  \begin{enumerate}
    \renewcommand{\labelenumi}{(\arabic{enumi})}
  \item\label{item:path-viol1} there is a cell immediately below containing a
    large entry which is not maximal, or
  \item\label{item:path-viol2} there is a cell immediately to the right containing a large entry,
  and all the small entries in this cell's column up to the top row are strictly less than~$e$.
  \end{enumerate}
  The \Dfn{maximal path violation} of $S$ is the one with the largest entry, and
  among those with the same largest entry, the one in the rightmost column.
\end{definition}
In the $3$-perflagged tableau on the right of
Figure~\ref{fig:exampletableaux}, the two path violations are circled, and the
maximal one is the entry $4$ in the third column.

Clearly, a $k$-perflagged tableau with no semistandard violations is a
$k$-flagged SSYT.  Now we show that $k$-perflagged tableaux with no path
violations correspond to $k$-tuples of paths via the construction in
Definition~\ref{def:tab}.

\begin{lemma}\label{lem:Tab}
  The map $\Mat P \mapsto \Tab(\Mat P)$ is a bijection between
  $\P^k(T, B)$ and the set of $k$-perflagged tableaux of shape
  $\lambda(T,B)$ with no path violations.  The weight of $\Tab(\Mat P)$ in terms of $h_i(\Mat P)$ and $u_s(\Mat P)$ is as in Theorem~\ref{thm:carlos}.
\end{lemma}
\begin{proof}
First we show that for any $\Mat P \in\P^k(T, B)$, the $k$-perflagged tableau $\Tab(\Mat P)$ has no path violations. By step (ii) in Definition~\ref{def:tab}, all large entries in row $r$ equal $k+r$,
and so $\Tab(\Mat P)$ has no path violations of type~\eqref{item:path-viol1}. To see that it has no path violations of type~\eqref{item:path-viol2}, suppose for contradiction that cell $c$ is such a violation, and let $e$ be the entry in $c$.
By construction, the upper boundary of $c$ is an east step of $P_{e-1}$. Since $c$ has a cell immediately to its right, the next east step of $P_{e-1}$ is not a bottom contact, and thus it is
the upper boundary of a cell containing a small entry $e'\ge e$, contradicting that $c$ is a path violation.

To show that the map $\Mat P \mapsto \Tab(\Mat P)$ is a bijection, we describe its inverse.
Given a $k$-perflagged tableau $S$ of shape $\lambda(T,B)$ with no path violations, construct an element of $\P^k(T,B)$ as follows.
For each $1\le j\le x$, consider the $j$-th column of the tableau, and suppose that the small entries of $S$ in this column are $e_1<e_2<\dots<e_s$. Note that they increase from top to bottom because $S$ is $k$-perflagged.
Now let the $j$-th east step of paths $P_1,P_2,\dots,P_{e_1-1}$ be the upper boundary of the cell containing $e_1$, and for each $2\le i\le s$, let
the $j$-th east step of $P_{e_{i-1}},P_{e_{i-1}+1},\dots,P_{e_i-1}$ be the upper boundary of the cell containing $e_i$. Finally, let
the $j$-th step of $P_{e_{s}},P_{e_{s}+1},\dots,P_{k}$ coincide with the $j$-th east step of $B$.

To see that the heights of the east steps of each path weakly increase from left to right, suppose the $j$-th step of path $P_i$ is the upper boundary of a cell $c$ containing the entry $e$. If there is no cell to the right of $c$,
then the $(j+1)$-st step of $P_i$ is at least as high as its $j$-th step. If there is such a cell, then the fact that $c$ is not a path violation guarantees that there is a small entry
of size at least $e$ in column $j+1$, in the same row as $c$ or above. Thus, the $(j+1)$-st step of $P_i$ is at least as high as its $j$-th step also in this case.
Thus, by adding north steps at the obvious places, this construction produces an an element $\Mat P\in\P^k(T,B)$.

Now we show that this $k$-tuple satisfies $\Tab(\Mat P)=S$. First, it is clear from the construction that the small entries of these tableaux agree. Second, the large entries of $S$ in row $r$ equal $k+r$, and so they agree with
$\Tab(\Mat P)$ as well.
Indeed, if $S$ had a large entry that is not maximal, then the entry immediately above it would be either a path violation of type~\eqref{item:path-viol1},
or another non-maximal large entry (since large entries in any given row strictly increase from top to bottom). In the second case, considering the entry immediately above and repeating
the argument would eventually lead to a path violation of type~\eqref{item:path-viol1}, since all the entries in the top row are small entries.
 It is also clear that $\Mat P$ is the unique $k$-tuple of paths satisfying $\Tab(\Mat P)=S$, because all the east steps of the paths are uniquely determined by the small entries in $S$.

Finally we prove that the weight is as claimed. Let us first consider large entries. A cell in row $r$ of $\Tab(\Mat P)$, where $2\le r\le y$ (recall that the first now has no large entries),
is filled with $k+r$ if and only if its upper boundary is an unused east step (i.e, it does not belong to any path in $\Mat P$) with $y$-coordinate $y-r+1$.
Regarding small entries, an entry $e$ with $1\le e\le k+1$ appears in a given column of $\Tab(\Mat P)$ if and only if the east steps of $P_{e-1}$ and $P_{e}$ in this column do not coincide.  Since the number of east steps of any path in $\Mat P$ is $\lambda_1$, the total number of cells filled with $e$ is $\lambda_1-h_{e-1}(\Mat P)$.
\end{proof}

We now define invertible, weight-preserving, local operations on
$k$-perflagged tableaux that \lq correct\rq\ their violations.

\begin{definition}
  Let $S$ be a $k$-perflagged tableau having its minimal semistandard violation
  at cell $c$. Let $e$, $\ec$, $\er$ be the entries in, above, and to the left
  of $c$, respectively. (If one of these cells is missing, we define the
  corresponding entry to be $0$.)  Define $j(S)$ to be the tableau obtained from
  $S$ by swapping $e$ and $\er$ if $\er > \ec$, and by swapping $e$ and $\ec$
  otherwise.
\end{definition}

In the rest of this section, we will use $c$ to denote the minimal semistandard violation of a $k$-perflagged tableau $S$, and we will use
$e$, $\ec$ and $\er$ to denote the entries in, above, and to the left of $c$, respectively.

\begin{definition}
  Let $S$ be a $k$-perflagged tableau having its maximal path violation at cell $d$.
  Let $f, f_{b}, f_{r}$ be the entries in, below and to the
  right of $d$, respectively. (In case one of these cells is missing, define the
  corresponding entry to be $0$).  Define $j^{-1}(S)$ to be the tableau obtained
  from $S$ by swapping $f$ with whichever of $f_{r}$ or
  $f_{b}$ is the smallest large entry, or, in case of a tie, with
  $f_{b}$.
\end{definition}

Note that in the above definition, at least one of $f_{r}$ (for a path violation of type~\eqref{item:path-viol2})
or $f_{b}$ (for type \eqref{item:path-viol1}) is a large entry.

The rest of the proof will proceed as follows. In Lemma~\ref{lem:jSperflagged} we show that the operation $j$ preserves the $k$-perflagged property. Lemma~\ref{lem:creation-of-violations} describes how
path and semistandard violations change when $j$ is applied. This and Lemma~\ref{lem:j-min} are needed to prove that the minimal semistandard violation of $S$ becomes the maximal path violation of $j(S)$ (Lemma~\ref{lem:maxmin}), which
is in turn used to show that $j^{-1}$ is indeed the inverse of $j$ (Lemma~\ref{lem:j-1}).

\begin{lemma}\label{lem:jSperflagged}
  Let $S$ be a $k$-perflagged tableau.  Then $j(S)$ is also a $k$-perflagged
  tableau.
\end{lemma}
Before proceeding to the proof let us remark that it is indeed
possible that in $j(S)$ there are two equal small entries, one
strictly southeast of the other, even if there is no such pair in
$S$.  This is demonstrated by the following example, where the
transformation $j$ is applied to a $1$-perflagged tableau:
$$
\young(1122,333,22,5)\quad\stackrel{j}{\mapsto}\quad\young(1122,233,32,5)
$$
\begin{proof}
Let $c$, $e$, $\ec$ and $\er$ be defined as above.
Using that $e$ is a small entry, $\max\{\er,\ec\}$ is a large entry, and small entries in $S$ are weakly increasing in rows and strictly increasing in columns, there are four possibilities for the relative values of these entries:
  \begin{enumerate}
  \item $\er\leq e\leq k+1 < \ec$ ($\er$ small),
  \item $e\leq k+1< \er\leq \ec$,
  \item $\ec< e\leq k+1 < \er$ ($\ec$ small),
  \item $e\leq k+1< \ec < \er$.
  \end{enumerate}

  Let us first consider cases (i) and (ii), in which $j$ makes the following local move:
  \begin{equation*}
    \young(:\ec,\er e)
    \quad\stackrel{j}{\mapsto}\quad
    \young(:e,\er\ec)
  \end{equation*}
  Since $S$ is $k$-flagged and $e$ is a small entry, it is clear that $j(S)$ is $k$-flagged. To check that $j(S)$ is $k$-perflagged, let us
  first consider relations between large entries where one is weakly southeast of the other. Since $\ec$ is the only large entry that is moved,
  we only need to check pairs involving the entry $\ec$ in cell $c$. There are two nontrivial cases for the other entry in the pair: it is either in the same row as $c$,
  or it is northwest of $c$ in the row immediately above. For large entries in other places, the relative position with respect to the moved entry $\ec$  is the same in $S$ and $j(S)$.

  \noindent {\it Large entries in the same row as $c$.} Since $S$ is $k$-perflagged, large entries to the right of $c$ must be strictly larger than $\ec$.
  Now consider entries to the left of $c$. In case (ii), $\er$ is large, so every large entry $f$ to its left satisfies $f\le\er\le\ec$.
  In case (i), if there was some large entry to the left of $\ec$, take the rightmost one. Then its right neighbor would be a small entry $g$ with $g\le\er\le e$,
  which would create a semistandard violation in $S$ contradicting the minimality of $c$.

  \noindent {\it Large entries northwest of $c$ one row above $c$.} We have to check that there
  is no entry equal to $\ec$. Suppose otherwise, and let $d$ be the rightmost cell with entry
  $\ec$.  It cannot have a small entry to its right, since this would be a
  smaller semistandard violation, so $d$ must be the cell immediately northwest of $c$.
  Thus, the entries around $c$ in $S$ are as follows:
  \begin{equation*}
    \young(\ec\ec,\er e)
  \end{equation*}
  In case (i), the cell containing $\er$ would be a semistandard violation, contradicting the minimality of $c$.
  In case (ii), we have two large entries in the same column with $\er\leq\ec$, contradicting that $S$ is $k$-perflagged.

  Next we consider relations between small entries in $j(S)$ where one is weakly southeast of the other. Let us first check pairs involving the moved entry $e$.
  Denote by $c_{\uparrow}$ the cell immediately above $c$, which contains $e$ in $j(S)$.
  There are three cases for the other entry in the pair: it is strictly northwest of $c_{\uparrow}$, it is in the same row as $c_{\uparrow}$, or it is strictly southeast of $c_{\uparrow}$.
  Recall that a $k$-perflagged tableau has no repeated small entries in the same column.

  \noindent {\it Small entries in the same row as $c_{\uparrow}$.} Since $S$ is $k$-perflagged, small entries to the left of $c_{\uparrow}$ must be
  less than or equal to $e$. If some entry to the right of $c_{\uparrow}$ was strictly smaller than $e$, take the leftmost one.
  The left neighbor in $S$ of this entry would then be a large entry (possibly $\ec$), and so it would create a semistandard violation smaller than $c$.

  \noindent {\it Small entries strictly northwest of $c_{\uparrow}$.} Since $S$ is $k$-perflagged, such an entry is either strictly less than $e$ or equal to $e$, in which case there is a chain in $S$ connecting it to $c$. The same chain without its last cell connects it to $c_{\uparrow}$ in $j(S)$.

  \noindent {\it Small entries strictly southeast of $c_{\uparrow}$.} Since $S$ is $k$-perflagged, these entries cannot be smaller than $e$.
  We only need to check that if there is an entry equal to $e$ in a cell $d$ strictly southeast of $c_{\uparrow}$, then there is a chain connecting $c_{\uparrow}$ and $d$.
  If the cell immediately to the right of $c_{\uparrow}$ also contains an entry $e$, then the chain connecting this entry to $d$ in $S$ is also a valid chain connecting $c_{\uparrow}$ to $d$ in $j(S)$:
  \begin{equation*}
    \Yvcentermath0\Yboxdim{14pt}%
    \young(:\ec e,\er e)\ddots\raisebox{-14pt}{\young(e)}
    \quad\stackrel{j}{\mapsto}\quad
    \young(:ee,\er\ec)\ddots\raisebox{-14pt}{\young(e)}
  \end{equation*}
  Otherwise, the entry $f$ immediately to the right of $c_{\uparrow}$ must be a large entry. Indeed, if it was small, the $k$-perflagged condition
  on $S$ would force $f\le e$ (since cell $d$ is weakly southeast of it), but then the cell containing $f$ would be a semistandard violation in $S$ smaller than $c$:
  \begin{equation*}
    \Yvcentermath0\Yboxdim{14pt}%
    \young(:\ec f,\er e)\ddots\raisebox{-14pt}{\young(e)}
    \quad\stackrel{j}{\mapsto}\quad
    \young(:ef,\er\ec)\ddots\raisebox{-14pt}{\young(e)}
  \end{equation*}
  Since $S$ is $k$-perflagged, we have that $\ec\leq f$, and $S$ contains a chain connecting the two entries $e$ in cells $c$ and $d$ (if $c$ and $d$ are in the same row, we consider this chain to be empty). Prepending the cell $c$ to the beginning of this chain, we obtain a chain in $j(S)$ connecting the two entries $e$ in cells $c_{\uparrow}$ and $d$.

  Finally, we have to check that for any pair of cells $d_1,d_2$ different from $c_\uparrow$ containing the same small entry $f$, with $d_2$ strictly southeast of $d_1$, there is a chain connecting $d_1$ and $d_2$ in $j(S)$.
  If $c_1,c_2,\dots,c_s$ is a chain in the notation from Definition~\ref{dfn:kperflagged}, we say that the chain \Dfn{passes through} the cells $c_i$ and the cells just northeast of each of the $c_i$'s.
  If $d_1$ and $d_2$ are connected by a chain in $S$ not passing through either of the cells $c$ and $c_\uparrow$, then the same chain connects them in $j(S)$. If the minimal rectangle containing $d_1$ and $d_2$
  contains also cell $c_\uparrow$, then $f=e$. Thus, concatenating a chain from $d_1$ to $c_\uparrow$ with a chain from $c_\uparrow$ to $d_2$, both of which have been shown to exist (allowing a chain to be empty for two cells in the same row), we get a chain from $d_1$ to $d_2$.
  The only situation where the rectangle spanned by $d_1$ and $d_2$ does not contain $c_\uparrow$ but a chain connecting $d_1$ and $d_2$ could pass through $c$ occurs if $d_1$ is in the same row as $c$, and $c$ is just northeast of a cell in the chain. In this case, the same chain connecting $d_1$ and $d_2$ in $S$ is a valid chain in $j(S)$, since the entry $e$ in $c$ has been replaced with a larger entry $\ec$.

\medskip

  Let us now turn to cases (iii) and (iv), in which $j$ makes the following local move:
  \begin{equation*}
    \young(:\ec,\er e)
    \quad\stackrel{j}{\mapsto}\quad
    \young(:\ec,e\er)
  \end{equation*}
  It is immediate that $j(S)$ is also $k$-flagged. To check that $j(S)$ is $k$-perflagged, let us
  consider large entries first. The only nontrivial relations that need to be checked are those involving the moved entry $\er$ and another large entry in the column it is moved to, the one containing cell $c$.
  Large entries below $c$ in its column must be strictly larger than $\er$, since $S$ is $k$-perflagged.
  Now consider large entries above $c$ in its column. In case (iii), there are none, because otherwise, the cell immediately below the bottommost such entry would be a semistandard violation smaller than $c$, using the fact that $\ec<e$.
  In case (iv), $\ec$ is large, so every large entry $f$ above it satisfies $f<\ec<\er$.

  Next we consider relations between small entries in $j(S)$ where one is weakly southeast of the other. Let us first check pairs involving the moved entry $e$. Let $c_{\leftarrow}$ be the cell immediately to the left of $c$, which contains $e$ in $j(S)$.

  \noindent {\it Small entries in the same column as $c_{\leftarrow}$.} Since $S$ is $k$-perflagged, small entries above $c_{\leftarrow}$ must be less than or equal to $e$. To show that they are in fact strictly smaller,
suppose that one of these entries is equal to $e$. Then the chain condition in $S$ applied to this entry and the entry $e$ in cell $c$ would require $\er\leq\ec$, contradicting our assumption.
  Also, all small entries below $c_{\leftarrow}$ in its column are strictly larger than $e$, because otherwise the topmost such entry would create a semistandard violation smaller than $c$, since $\er$ is large.

\noindent {\it Small entries strictly northwest of $c_{\leftarrow}$.} Since $S$ is $k$-perflagged, such an entry is either strictly less than $e$ or equal to $e$, in which case there is a chain in $S$ connecting it to $c$. Since $\er>\ec$, this chain does not contain $c_{\leftarrow}$, and so the same chain connects it to $c_{\leftarrow}$ in $j(S)$.

\noindent {\it Small entries strictly southeast of $c_{\leftarrow}$.} Since $S$ is $k$-perflagged, these entries cannot be smaller than $e$. If there is an entry equal to $e$, then the same chain in $S$ that connects it to cell $c$ is a chain in $j(S)$ that connects it to cell $c_{\leftarrow}$.

 Finally, we have to check that for any pair of cells $d_1,d_2$ different from $c_{\leftarrow}$ containing the same small entry $f$, with $d_2$ strictly southeast of $d_1$, there is a chain connecting $d_1$ and $d_2$ in $j(S)$.
The only nontrivial case is when the chain connecting $d_1$ and $d_2$ in $S$ passes through $c$ or $c_\leftarrow$. If the minimal rectangle containing $d_1$ and $d_2$
  contains also cell $c_\leftarrow$, then $f=e$, and so concatenating a chain from $d_1$ to $c_\leftarrow$ with a chain from $c_\uparrow$ to $d_2$ (allowing chains connecting cells in the same row to be empty) we get a chain from $d_1$ to $d_2$.
  If this rectangle does not contain $c_\leftarrow$, the only way for a chain connecting $d_1$ and $d_2$ to pass through $c$ occurs if $d_1$ is above $c$ in the same column, and $d_2$ is weakly southeast of $c$.
  But in this case $f<e$ (since they are in the same column in $S$) and $e\le f$ (since $d_2$ is weakly southeast of $c$ in $S$), which is a contradiction.
\end{proof}

In the following lemmas, $S$ is a $k$-perflagged tableau with minimal semistandard violation at cell $c$, containing entry $e$. We denote by
$j(c)$ the cell that this entry is moved to in $j(S)$.

\begin{lemma}\label{lem:creation-of-violations}
  Let $S$, $c$, $e$ and $j(c)$ as above. Denote by $\Set V_{path}^{\geq e}(S)$ the set of path violations of $S$ with entry at least $e$, and by $\Set V_{sstd}(S)$ the set of semistandard violations of
  $S$. Then
\begin{enumerate}  \renewcommand{\labelenumi}{(\alph{enumi})}
\item $\Set V_{path}^{\geq e}(j(S)) \setminus \Set V_{path}^{\geq e}(S) = \{j(c)\}$,
\item $\Set V_{sstd}(j(S)) \setminus \Set V_{sstd}(S) \subseteq \{j(c)\}$.
\end{enumerate}
\end{lemma}

\begin{proof}
To prove part~(a), suppose first that $\er\leq\ec$, which corresponds to cases (i) and (ii) in the proof of Lemma~\ref{lem:jSperflagged},
  where $j(c)$ is the cell above $c$. Clearly, $j(c)$ is a path violation of type~\eqref{item:path-viol1} in $j(S)$, because it contains a small entry $e$, and below it there is a large entry $\ec$ which is not maximal, as it occurs in $S$ in a higher row than in $j(S)$. To see that $j(S)$ has no other path violations aside from the ones in $S$, it is enough to check that the cell to the left of $c$, which contains $\er$, is not a path violation of type~\eqref{item:path-viol2}.
  For this to happen, $\er$ would have to be a small entry (case (i)), but then the fact that the next column to its right contains a small entry $e$ satisfying $\er\le e$ prevents it from being a path violation.

  Suppose now that $\ec < \er$ (cases (iii) and (iv)), where $j(c)$ is the cell to the left of $c$. Now $j(c)$ is a path violation of type~\eqref{item:path-viol2} in $j(S)$, because
  it contains a small entry $e$, there is a large entry $\er$ to its right, and all the small entries above $c$ in $j(S)$ are strictly less than $e$, since $S$ is $k$-perflagged.
  The only possible additional path violation of $j(S)$ would be the cell above $c$ containing $\ec$, if this was a small entry. But then $\ec<e$, and so this cell is not in $\Set V_{path}^{\geq e}\big(j(S)\big)$.

  It remains to prove part~(b). In both cases, whether the local move is
  \begin{equation*}
    \young(:b,a\ec,\er e)\quad\stackrel{j}{\mapsto}\quad \young(:b,ae,\er\ec) \qquad\qquad \mbox{or} \qquad\qquad
    \young(:b,a\ec,\er e) \quad\stackrel{j}{\mapsto}\quad \young(:b,a\ec,e\er),
  \end{equation*}
  a semistandard violation is created at $j(c)$ when $a$ or $b$ are large entries.
  It is clear that no other semistandard violations are created.
\end{proof}

We remark that the above lemma implies that if $j(c)$ is a semistandard violation of $j(S)$, then it is minimal. Indeed, by part~(b), the tableau $j(S)$ has no semistandard violations other than $j(c)$ and
those in $S$. Since the cell $j(c)$ is either in the same column as $c$ or to its left, and it contains the same entry as the minimal semistandard violation of $S$, it has to be minimal in $j(S)$.

\begin{lemma}\label{lem:j-min}
 Let $S$, $c$, $e$, and $j(c)$ as above, and let $d$ be the minimal semistandard violation of $j(S)$.
Suppose that $d\neq j(c)$ and that $d$ also contains the entry $e$. Then $d$ is strictly to the right of~$c$.
\end{lemma}
\begin{proof}
The entry $e$ in cell $d$ does not move when $j$ is applied to $S$, since $j$ switches the entry $e$ in cell $c$ (moving it to to $j(c)$) with a strictly larger entry.
Now, since $c$ and $d$ contain the same entry $e$ in $S$, which is a $k$-perflagged tableau, they have to be in different columns.
But if $d$ was to the left of $c$, then $d$ would be a semistandard violation of $S$ contradicting the minimality of $c$. Thus, $d$ is strictly to the right of $c$.
\end{proof}

In the following lemma, $j^m$ denotes the composition of $j$ with itself $m$ times.

\begin{lemma}\label{lem:maxmin}
Let $R$ be a $k$-perflagged tableau with no path violations, and let $S=j^m(R)$ for some $m\ge0$. If $c$ is the minimal semistandard violation of $S$, then $j(c)$ is the maximal path violation of $j(S)$.
\end{lemma}

\begin{proof}
We proceed by induction on $m$. For $m=0$, this follows from Lemma~\ref{lem:creation-of-violations}(a), since $\Set V_{path}^{\geq e}(j(S))=\{j(c)\}$ in this case.

Now suppose that the statement is true for $m$, that is, $c$ is the minimal semistandard violation of $S = j^m(R)$, and $j(c)$ is the maximal path violation of $j(S)$.
Let $d$ be the minimal semistandard violation of $j(S)$. To prove that the statement holds for $m+1$, we have to show that the maximal path violation of $j^2(S)$ is $j(d)$.

Let $e$ be the entry in cell $c$ of $S$, and let $f$ be the entry in cell $d$ of $j(S)$.
Note that $f\ge e$, because otherwise, by Lemma~\ref{lem:creation-of-violations}(b), $d$ would have been a semistandard violation of $S$ smaller than $c$.
Suppose for contradiction that the maximal path violation of $j^2(S)$ is some cell $a$ other than $j(d)$, and let $g$ be its entry.
Lemma~\ref{lem:creation-of-violations}(a) applied to $j(S)$ states that $$\Set V_{path}^{\geq f}(j^2(S)) \setminus \Set V_{path}^{\geq f}(j(S)) = \{j(d)\}.$$
It follows that $j(d)$ is a path violation of $j^2(S)$, and that $a$ is a path violation of $j(S)$, since $g\ge f$ (because $a$ is maximal in $j^2(S)$).
If $g>e$, then $a$ would be a path violation of $j(S)$ larger than $j(c)$, a contradiction.
If $g=f=e$, then $a$ must be strictly to the right of $j(d)$, since it is maximal.
But $a$ and $d$ cannot be in the same column, since they are different cells containing the same entry in $j(S)$, and so $a$ is strictly to the right of $d$ as well.
If $d\neq j(c)$, then by Lemma~\ref{lem:j-min}, $d$ is strictly to the right of $c$ and $j(c)$. In all cases, $a$ is strictly to the right of $j(c)$, which contradicts the fact that $j(c)$ is the maximal path violation of $j(S)$.
\end{proof}

\begin{lemma}\label{lem:j-1}
Let $R$ be a $k$-perflagged  tableau with no path violations, and let $S=j^m(R)$ for some $m\ge0$. Suppose that $S$ has some semistandard violation. Then $$j^{-1}(j(S))=S.$$
\end{lemma}
\begin{proof}
Let $c$, $e$, $\er$ and $\ec$ as before. Suppose first that $\er\leq \ec$. If there is a cell just northeast of $c$, let $\erb$ be its entry.
Then $j$ makes the following local move when applied to $S$ (the cell with $\erb$ may not be there):
  \begin{equation*}
    \young(:\ec\erb,\er e)
    \quad\stackrel{j}{\mapsto}\quad
    \young(:e\erb,\er\ec)
  \end{equation*}
By Lemma~\ref{lem:maxmin}, cell $j(c)$ is the maximal path violation of $j(S)$.
If $\erb$ is a large entry, then $\ec \leq \erb$, since $S$ is $k$-perflagged. In any case, when $j^{-1}$ is applied to $j(S)$, it switches the entry $e$ in cell $j(c)$ with the entry $\ec$ in cell $c$.

Suppose now that $\ec < \er$. If there is a cell just southwest of $c$, let $\ecb$ be its entry. Now $j$ makes the following local move (the cell with $\ecb$ may not be there):
  \begin{equation*}
    \young(:\ec,\er e,\ecb)
    \quad\stackrel{j}{\mapsto}\quad
    \young(:\ec,e\er,\ecb)
  \end{equation*}
Again by Lemma~\ref{lem:maxmin}, $j(c)$ is the maximal path violation of $j(S)$.
If $\ecb$ is a large entry, then $\er< \ecb$, since $S$ is $k$-perflagged. Thus, when $j^{-1}$ is applied to $j(S)$, it switches the entry $e$ in cell $j(c)$ with the entry $\er$ in cell $c$.
\end{proof}

\begin{proof}[Proof of Theorem~\ref{thm:carlos}.]
Let $\Mat P\in \P^k(T, B)$ with $h_i(\Mat P)=h_i$ and $u_s(\Mat P)=u_s$.
By Lemma~\ref{lem:Tab}, the map $\Mat P \mapsto \Tab(\Mat P)$ is a bijection
between such $k$-tuples of paths and $k$-perflagged tableaux of shape $\lambda(T,B)$ with no path violations
and weight $(\lambda_1-h_0, \lambda_1-h_1, \dots, \lambda_1-h_k,\,u_1, u_2, \dots, u_{y-1})$.
To transform such a tableau into a SSYT, we repeatedly apply $j$, until no semistandard violation occurs.
To see that this process ends in a finite number of steps,
define a function that associates to each tableau the positive integer $\sum_{i,j}(i+j)e_{ij}$, where
$e_{ij}$ is the entry in row $i$ from the top and column $j$ from the left, and the sum is over all cells in the tableau.
This function strictly decreases each time that $j$ is applied, so the process ends in a tableau with no semistandard violations, which we define as $\bij(\Mat P)$.
Since $j$ is clearly weight-preserving, $\bij(\Mat P)$ is a SSYT of shape $\lambda(T,B)$ with the weight as claimed.

It remains to show that $\bij$ is a bijection. By Lemma~\ref{lem:j-1}, the process that transforms a $k$-perflagged tableaux of shape $\lambda(T,B)$ with no path violations
into a $k$-flagged SSYT is reversible. Thus, when disregarding the statistics $h_i$ and $u_s$, our map $\bij$ extends to an injection from the set of all $k$-tuples in $\P^k(T,B)$ to the set of all $k$-flagged SSYT of shape $\lambda(T,B)$.
Since these two sets have the same cardinality by Lemma~\ref{lem:easybij}, the map $\bij$ is surjective as well.
\end{proof}

The example in Figure~\ref{fig:exampletableaux} illustrates the bijection $\bij$, starting from a $k$-tuple of paths, constructing a $k$-perflagged tableau with no path violations, and then repeatedly applying $j$ to obtain a $k$-perflagged SSYT.

\begin{figure}[htb]
  \def\ct{\circled{3}}
  \def\cbt{\thickcircled{3}}
  \def\cf{\circled{4}}
  \def\cbf{\thickcircled{4}}
  \def\dt{\diamonded{3}}
  \def\dbt{\thickdiamonded{3}}
  \def\df{\diamonded{4}}
  \def\dbf{\thickdiamonded{4}}
  \def\cdbt{\thickcirclediamonded{3}}
  \Yboxdim{15pt}%
\begin{center}
  \begin{tikzpicture}[scale=0.53]
 \draw (-0.08,0.08) circle(1.2pt) \N\N\N\N\E\E\E\E;
 \draw (0.08,-0.08) circle(1.2pt) \E\E\N\E\N\E\N\N;
 \draw[very thick,brown] (-0.04,0.04) \n\n\n\e\n\e\e\e;
 \draw[very thick,purple] (0,0) \n\e\n\e\e\n\e\n;
 \draw[very thick,violet] (0.04,-0.04) \e\n\n\e\e\e\n\n;
 \draw (4.4,2) node[right] {$\stackrel{\Tab}{\mapsto}
    \young(1222,255\ct,6\cf\cf,\cbt7)\stackrel{j}{\mapsto}
    \young(1222,255\cbt,\dbt\cf\cf,67)\stackrel{j}{\mapsto}
    \young(1222,25\cdbt5,\dt\cf4,67)\stackrel{j}{\mapsto}
    \young(1222,2\dbt55,\dt4\cbf,67)\stackrel{j}{\mapsto}
    \young(1222,23\dbf5,\dt45,67)$};
\end{tikzpicture}
\end{center}
\caption{The sequence of tableaux in the construction of $\bij(\Mat P)$ for a $3$-tuple of paths $\Mat P$. The (minimal) semistandard violations are indicated by (bold)
  diamonds and the (maximal) path violations by (bold) circles.}
\label{fig:exampletableaux}
\end{figure}
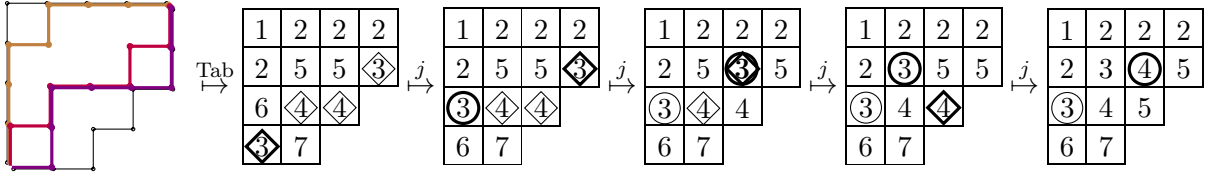

\appendix
\section{A bijective proof of the independence of the Tutte
  polynomial from the ordering of the ground set.}
\label{sec:Tutte-independence}

In Section~\ref{sec:bottom-left-top-right} we mentioned
that we do not know of a \lq natural\rq\
bijective proof of Theorem~\ref{thm:bottom-left-top-right}.
In the language of lattice path matroids, this would be a bijection from $\B$ to itself that turns internal and external
activity with respect to the ordering $1\prec2\prec3\prec\cdots$ of the ground set
into internal and external activity with respect the ordering $\cdots\prec3\prec2\prec1$.
However, in the basic case that the ordering of the ground set is modified
only by transposing two adjacent elements,
we can give a bijection from the set of bases of \emph{any} matroid to itself that
preserves internal and external activity. Since we can go from any ordering of the ground set to any other ordering by successive transpositions of
adjacent elements, a bijection proving Theorem~\ref{thm:bottom-left-top-right} is obtained by composing $\binom{|E|}{2}$ iterations of the bijection below, where $E$ is the ground set of the matroid.

Since this appendix applies to an arbitrary matroid, we discontinue the
typographic conventions of the other sections. In this section, $E$ denotes the ground set, and the letters
$a$, $b$, $c$, $d$, $e$, $x$ and $y$ are used to denote elements of $E$, while $\BB$ denotes the set of bases,
and the letters $B$, $B'$, $C$ and $D$ are used to denote bases. We use $B-x+y$ to denote $B\setminus\{x\}\cup\{y\}$. Furthermore, if $B$ contains exactly one element of the pair $\{x,y\}$,
we define $B^{x\leftrightarrow y}$ to be $B-x+y$ if $x\in B$, or $B-y+x$ otherwise (note that $B^{x\leftrightarrow y}=B^{y\leftrightarrow x}$ by definition).
If $B\in\BB$, recall that $x$ is \Dfn{active} with respect to $(B,\prec)$ if it cannot be switched with a smaller element to produce another base, that is, there
is no $y\prec x$ such that $B^{x\leftrightarrow y}$ is defined and belongs to $\BB$.

In the proof of the following theorem we use the \Dfn{strong basis exchange property} of matroids:
\begin{lemma}\label{lem:bep}
Let $C$ and $D$ be bases of a matroid, and let $d\in D\setminus C$. Then there exists $c\in C\setminus D$ such that $C-c+d$ and $D-d+c$ are bases of the matroid.
\end{lemma}

\begin{theorem}
  Let $\prec$ be a linear order on $E$, and let $x\prec y$ be adjacent elements in this order.
  Let $\prec'$ be the order obtained from $\prec$ by reversing the relative order of $x$ and $y$ and keeping the rest of order relationships unchanged.
  Define a map $\varphi_{xy}:\BB\rightarrow\BB$ by letting $\varphi_{xy}(B)=B'$, where
\beq\label{eq:B'}B'=\begin{cases} B^{x\leftrightarrow y} & \text{if $B$ contains exactly one element from $\{x,y\}$, $B^{x\leftrightarrow y}\in\BB$, and}\\
& \text{either $x$ is active w.r.t.\ $(B,\prec)$ or $y$ is active w.r.t.\ $(B,\prec')$;}\\
B &\text{otherwise.}
\end{cases}\eeq
Then $\varphi_{xy}$ is a bijection with the property that
the internal and external activity of $(B,\prec)$ equal the internal and external activity of $(B',\prec')$, respectively.
\end{theorem}

\begin{proof}
Let us first show that $\varphi_{xy}$ is a bijection. We claim that given $B'\in\BB$, we can recover $B$ applying the map $\varphi_{yx}$, which sends $B'$ to
\beq\label{eq:B''}B''=\begin{cases} B'^{y\leftrightarrow x} & \text{if $B'$ contains exactly one element from $\{x,y\}$, $B'^{y\leftrightarrow x}\in\BB$, and}\\
& \text{either $y$ is active w.r.t.\ $(B',\prec')$ or $x$ is active w.r.t.\ $(B',\prec)$;}\\
B' &\text{otherwise.}
\end{cases}\eeq
The fact that $B''=B$ is clear if either $x,y\in B$ or $x,y\notin B$, or if $B^{x\leftrightarrow y}\notin\BB$, because in these cases $B''=B'=B$.
Suppose now that $B$ contains exactly one element from $\{x,y\}$, and $B^{x\leftrightarrow y}\in\BB$. If
$x$ is inactive w.r.t.\ $(B,\prec)$ and $y$ is inactive w.r.t.\ $(B,\prec')$, then $B'=B$ by definition, and in this case, since
$y$ is inactive w.r.t.\ $(B',\prec')$ and $x$ is inactive w.r.t.\ $(B',\prec)$, we have that $B''=B$.

The only remaining case is when the conditions in the first part of~\eqref{eq:B'} hold, and so $B'=B^{x\leftrightarrow y}$.
In this case, the following two statements are true:
\ben
\item $x$ is active w.r.t.\ $(B,\prec)$ if and only if $y$ is active w.r.t.\ $(B',\prec')$,
\item $y$ is active w.r.t.\ $(B,\prec')$ if and only if $x$ is active w.r.t.\ $(B',\prec)$.
\een
Statement (i) is clear since $B^{x\leftrightarrow e} = B'^{y\leftrightarrow e}$ for every $e$ for which these are defined, and $e\prec x$ if and only if $e\prec' y$.
Similarly, statement (ii) holds because $B^{y\leftrightarrow e} = B'^{x\leftrightarrow e}$ for every $e$ for which these are defined, and $e\prec' y$ if and only if $e\prec x$.
Now we show that the conditions in the first part of~\eqref{eq:B''} are satisfied, and thus $B''=B'^{y\leftrightarrow x}=B$.
Indeed, since $B$ contains exactly one element from $\{x,y\}$, so does $B'=B^{x\leftrightarrow y}$, and we have $B'^{y\leftrightarrow x}=B\in\BB$.
Additionally, since either $x$ is active w.r.t.\ $(B,\prec)$ or $y$ is active w.r.t.\ $(B,\prec')$, the statements (i) and (ii) imply that either $y$ is active w.r.t.\ $(B',\prec')$ or $x$ is active w.r.t.\ $(B',\prec)$.

Next we show that the internal and external activity of $(B,\prec)$ equal the internal and external activity of $(B',\prec')$, respectively.
Consider first the case that $B'=B$, which happens if any of the following hold: \ben \renewcommand{\labelenumi}{(\alph{enumi})}
\item either $x,y\in B$ or $x,y\notin B$; \item $B^{x\leftrightarrow y}\notin\BB$; \item  $x$ is inactive w.r.t.\ $(B,\prec)$ and $y$ is inactive w.r.t.\ $(B,\prec')$. \een
In all three subcases, it is clear that each $e\notin\{x,y\}$ is active w.r.t.\ $(B,\prec)$ if and only if it is active w.r.t.\ $(B,\prec')$, since the relative order of $e$ with the other elements of $E$ does not change. Let us now show that for $e\in\{x,y\}$, $e$ is equally active w.r.t to both orderings. If (a) and (b) hold, this is clear, because $x$ and $y$ cannot be switched with each other to produce a base.
In case (c), the reason is that $x$ is inactive w.r.t.\ $(B,\prec)$, so it is also inactive w.r.t.\ $(B,\prec')$, since the elements smaller than $x$ in $\prec$ are still smaller than $x$ in $\prec'$;
similarly, $y$ is inactive w.r.t.\ $(B,\prec')$, so it is inactive w.r.t.\ $(B,\prec)$ as well.

Finally, consider the case in which none of (a), (b) and (c) hold, and so $B'=B^{x\leftrightarrow y}$. In this case, $y$ is always inactive w.r.t.\ $(B,\prec)$ because $x\prec y$ and $B^{x\leftrightarrow y}\in\BB$.
Similarly, $x$ is always inactive w.r.t.\ $(B',\prec')$ because $y\prec' x$ and $B'^{y\leftrightarrow x}=B\in\BB$. On the other hand, by statement (i) above,
$x$ is active w.r.t.\ $(B,\prec)$ if and only if $y$ is active w.r.t.\ $(B',\prec')$. Thus, we have proved that the number of active elements among $\{x,y\}$ is the same w.r.t.\ both $(B,\prec)$ and $(B',\prec')$.

Next we show that any $e\notin\{x,y\}$ is active w.r.t.\ $(B,\prec)$ if and only if it is active w.r.t.\ $(B',\prec')$.
It is enough to show that such an $e$ cannot be inactive w.r.t.\ $(B, \prec)$ but active w.r.t.\ $(B', \prec')$. The symmetric statement then follows
from the fact that $\varphi_{yx}(B')=B$.

Suppose for contradiction that $e\notin\{x,y\}$ is inactive w.r.t.\ $(B, \prec)$ but active w.r.t.\ $(B', \prec')$.
By definition, there is an element $a$ such that $a\prec e$, and $B^{e\leftrightarrow a}$ is defined and is a base.
We can easily discard the case that $a\in\{x,y\}$, because letting $b$ be such that $\{a,b\}=\{x,y\}$, we would have $B^{e\leftrightarrow a}=B'^{e\leftrightarrow b}\in\BB$ and $b\prec' e$,
so $e$ would be inactive w.r.t.\ $(B', \prec')$. Thus we assume that $a\notin\{x,y\}$.
For simplicity of notation, we suppose in what follows that $x\in B$, and so $B'=B-x+y$ (if $y\in B$ instead, just replace all $x$ with $y$ and all $y$ with $x$).

If $e\notin B$, then $B^{e\leftrightarrow a}=B-a+e$.
By Lemma~\ref{lem:bep} applied to the bases $C=B'$ and $D=B-a+e$ with $d=e$, there is a $c\in C\setminus D=\{y,a\}$ such that $B'-c+e$ and $D-e+c=B-a+c$ are bases.
If $c\prec'e$, then the fact that $B'-c+e\in\BB$ contradicts the assumption that $e$ is
active w.r.t.\ $(B',\prec')$. Thus, we must have $e\prec'c$, which implies that $c=y$. But then, $B'-y+e=B-x+e\in\BB$ and $e\prec x$, so $x$ is inactive w.r.t.\ $(B,\prec)$. Also, $B-a+y\in\BB$
and $a\prec' e\prec' y$, so $y$ is inactive w.r.t.\ $(B,\prec')$. But then (c) would hold, contradicting our assumption.

If $e\in B$, then $B^{e\leftrightarrow a}=B-e+a$ and the argument is very similar:
we apply Lemma~\ref{lem:bep} with $C=B-e+a$, $D=B'$ and $d=e$ to conclude that
either $e$ is active w.r.t.\ $(B',\prec')$, or $y$ is inactive w.r.t.\ $(B,\prec')$ and $x$ is inactive w.r.t.\ $(B,\prec)$, reaching a contradiction in both cases.
\end{proof}

\bibliographystyle{amsplain}

\begin{thebibliography}{10}

\bibitem{MR1926854} George E. Andrews, Christian Krattenthaler, Luigi Orsina, Paolo Papi, \emph{ad-nilpotent $\mathfrak{b}$-ideals in $\mathrm{sl}(n)$ having a fixed class of nilpotence: combinatorics and enumeration}, Trans. Amer. Math. Soc. \textbf{354} (2002), no. 10, 3835--3853.

\bibitem{Ardila2003}
Federico Ardila, \emph{The {C}atalan matroid}, J. Combin. Theory Ser. A
  \textbf{104} (2003), no.~1, 49--62.

\bibitem{MR1909568}
Cyril Banderier and Philippe Flajolet, \emph{Basic analytic combinatorics of
  directed lattice paths}, Theoret. Comput. Sci. \textbf{281} (2002), no.~1-2,
  37--80, Selected papers in honour of Maurice Nivat.

\bibitem{BenkartSottileStroomer1996}
Georgia Benkart, Frank Sottile, and Jeffrey Stroomer, \emph{Tableau switching:
  algorithms and applications}, J. Combin. Theory Ser. A \textbf{76} (1996),
  no.~1, 11--43.

\bibitem{Bergeron2011} Fran\c{c}ois Bergeron and and
  Louis-Fran\c{c}ois Pr\'eville-Ratelle, \emph{Higher Trivariate
    Diagonal Harmonics via generalized Tamari Posets}, J. Comb. \textbf{3} (2012), no.~3, 317--341.     

\bibitem{MR0061567}
M.~T.~L. Bizley, \emph{Derivation of a new formula for the number of minimal
  lattice paths from {$(0,0)$} to {$(km,kn)$} having just {$t$} contacts with
  the line {$my=nx$} and having no points above this line; and a proof of
  {G}rossman's formula for the number of paths which may touch but do not rise
  above this line}, J. Inst. Actuar. \textbf{80} (1954), 55--62.

\bibitem{BoninDeMierNoy2003}
Joseph Bonin, Anna de~Mier, and Marc Noy, \emph{Lattice path matroids:
  enumerative aspects and {T}utte polynomials}, J. Combin. Theory Ser. A
  \textbf{104} (2003), no.~1, 63--94.

\bibitem{BrakEssam2001}
Richard Brak and John~W. Essam, \emph{Return polynomials for non-intersecting
  paths above a surface on the directed square lattice}, J. Phys. A \textbf{34}
  (2001), no.~49, 10763--10782.

\bibitem{BousquetMelou} Mireille Bousquet-M\'elou, \'Eric Fusy, Louis-Fran\c{c}ois Pr\'eville-Ratelle, 
\emph{The number of intervals in the $m$-Tamari lattices}, Electron. J. Combin. \textbf{18} (2011), no.~2, Paper 31, 26 pp. 

\bibitem{MR2469258}
Robin~J. Chapman, Timothy~Y. Chow, Amit Khetan, David~Petrie Moulton, and
  Robert~J. Waters, \emph{Simple formulas for lattice paths avoiding certain
  periodic staircase boundaries}, J. Combin. Theory Ser. A \textbf{116} (2009),
  no.~1, 205--214.

\bibitem{Crapo1969}
Henry~H. Crapo, \emph{The {T}utte polynomial}, Aequationes Math. \textbf{3}
  (1969), 211--229. 

\bibitem{Deutsch1998}
Emeric Deutsch, \emph{A bijection on {D}yck paths and its consequences},
  Discrete Math. \textbf{179} (1998), no.~1-3, 253--256.

\bibitem{Deutsch1999}
\bysame, \emph{An involution on {D}yck paths and its consequences}, Discrete
  Math. \textbf{204} (1999), no.~1-3, 163--166.

\bibitem{MR1931939}
Andreas Dress, Jack~H. Koolen, and Vincent Moulton, \emph{On line arrangements
  in the hyperbolic plane}, European J. Combin. \textbf{23} (2002), no.~5,
  549--557. 

\bibitem{MR1798327}
Philippe Duchon, \emph{On the enumeration and generation of generalized {D}yck
  words}, Discrete Math. \textbf{225} (2000), no.~1-3, 121--135, Formal power
  series and algebraic combinatorics (Toronto, ON, 1998).

\bibitem{Elizalde2006}
Sergi Elizalde, \emph{A bijection between 2-triangulations and pairs of
  non-crossing {D}yck paths}, J. Combin. Theory Ser. A \textbf{114} (2007),
  no.~8, 1481--1503. 

\bibitem{Fisher}
Michael~E. Fisher, \emph{Walks, walls, wetting, and melting}, J. Statist. Phys.
  \textbf{34} (1984), no.~5-6, 667--729. 

\bibitem{MR570213}
Ira~M. Gessel, \emph{A factorization for formal {L}aurent series and lattice
  path enumeration}, J. Combin. Theory Ser. A \textbf{28} (1980), no.~3,
  321--337.

\bibitem{MR1456723}
Ira~M. Gessel and Sangwook Ree, \emph{Lattice paths and {F}aber polynomials},
  Advances in combinatorial methods and applications to probability and
  statistics, Stat. Ind. Technol., Birkh\"auser Boston, Boston, MA, 1997,
  pp.~3--13. 

\bibitem{GesselViennot1985}
Ira~M. Gessel and Xavier~G{\'e}rard Viennot, \emph{Binomial determinants,
  paths, and hook length formulae}, Adv. in Math. \textbf{58} (1985), no.~3,
  300--321. 

\bibitem{GesselViennot1989}
\bysame, \emph{Determinants, paths, and plane partitions}, 1989, Available
  at \url{http://people.brandeis.edu/~gessel/homepage/papers/pp.pdf}.

\bibitem{GreeneKleitman}
Curtis Greene and Daniel~J. Kleitman, \emph{Strong versions of {S}perner's
  theorem}, J. Combin. Theory Ser. A \textbf{20} (1976), no.~1, 80--88.

\bibitem{MR2371044}
James Haglund, \emph{The {$q$},{$t$}-{C}atalan numbers and the space of
  diagonal harmonics}, University Lecture Series, vol.~41, American
  Mathematical Society, Providence, RI, 2008, With an appendix on the
  combinatorics of Macdonald polynomials. 

\bibitem{Han}
Guoniu Han, \emph{Minima et maxima}, Unpublished Manuscript, 2004.

\bibitem{MR2609483}
Katherine Humphreys, \emph{A history and a survey of lattice path enumeration},
  J. Statist. Plann. Inference \textbf{140} (2010), no.~8.

\bibitem{MR2500155}
John Irving and Amarpreet Rattan, \emph{The number of lattice paths below a
  cyclically shifting boundary}, J. Combin. Theory Ser. A \textbf{116} (2009),
  no.~3, 499--514. 

\bibitem{Jonsson2005}
Jakob Jonsson, \emph{Generalized triangulations and diagonal-free subsets of
  stack polyominoes}, J. Combin. Theory Ser. A \textbf{112} (2005), no.~1,
  117--142.

\bibitem{MR0067418}
Vladimir.~S. Korolyuk, \emph{On the discrepancy of empiric distributions for
  the case of two independent samples}, Izv. Akad. Nauk SSSR. Ser. Math.
  \textbf{19} (1955), 81--96. 

\bibitem{Krattenthaler2006a}
Christian Krattenthaler, \emph{{Watermelon configurations with wall
  interaction: exact and asymptotic results}}, Journal of Physics: Conference
  Series \textbf{42} (2006), no.~1, 179.

\bibitem{MR2134172}
Nicholas A. Loehr, \emph{Conjectured statistics for the higher $q,t$-Catalan sequences},
Electron. J. Combin. \textbf{12} (2005), \#R9, 54 pp.

\bibitem{MR1756126}
Tomoki Nakamigawa, \emph{A generalization of diagonal flips in a convex
  polygon}, Theoret. Comput. Sci. \textbf{235} (2000), no.~2, 271--282,
  Combinatorics and optimization (Okinawa, 1996).

\bibitem{MR530551}
Tadepalli~V. Narayana, \emph{Lattice path combinatorics with statistical
  applications}, Mathematical Expositions, vol.~23, University of Toronto
  Press, Toronto, Ont., 1979. 

\bibitem{Nicolas2009}
Carlos~M. Nicol{\'a}s, \emph{Another bijection between 2-triangulations and
  pairs of non-crossing {D}yck paths}, 21st {I}nternational {C}onference on
  {F}ormal {P}ower {S}eries and {A}lgebraic {C}ombinatorics ({FPSAC} 2009),
  Discrete Math. Theor. Comput. Sci. Proc., AK, Assoc. Discrete Math. Theor.
  Comput. Sci., Nancy, 2009, pp.~697--708. 

\bibitem{MR2398417}
Marc Renault, \emph{Lost (and found) in translation: {A}ndr\'e's actual method
  and its application to the generalized ballot problem}, Amer. Math. Monthly
  \textbf{115} (2008), no.~4, 358--363. 

\bibitem{SerranoStump2010}
Luis Serrano and Christian Stump, \emph{Maximal fillings of moon polyominoes,
  simplicial complexes, and {S}chubert polynomials}, Electron. J. Combin.
  \textbf{19} (2012), no.~1, Paper 16, 18 pp. 

\bibitem{EC2}
Richard~P. Stanley, \emph{Enumerative combinatorics. {V}ol. 2}, Cambridge
  Studies in Advanced Mathematics, vol.~62, Cambridge University Press,
  Cambridge, 1999, With a foreword by Gian-Carlo Rota and appendix 1 by Sergey
  Fomin.

\bibitem{MR0061366}
W.~T. Tutte, \emph{A contribution to the theory of chromatic polynomials},
  Canadian J. Math. \textbf{6} (1954), 80--91.

\bibitem{Vanderzande1998}
Carlo Vanderzande, \emph{Lattice models of polymers}, Lecture Notes in Physics,
  vol.~11, Cambridge University Press, 1998.

\bibitem{Wachs1985}
Michelle~L. Wachs, \emph{Flagged {S}chur functions, {S}chubert polynomials, and
  symmetrizing operators}, J. Combin. Theory Ser. A \textbf{40} (1985), no.~2,
  276--289. 

\bibitem{Whitworth1878}
William~Allen Whitworth, \emph{Arrangements of $m$ things of one sort and $n$
  things of another sort, under certain conditions of priority}, Messenger of
  Mathematics \textbf{8} (1878), 105--114.

\end{thebibliography}

\providecommand{\cocoa} {\mbox{\rm C\kern-.13em o\kern-.07em C\kern-.13em
  o\kern-.15em A}}\def\cprime{$'$}
\providecommand{\bysame}{\leavevmode\hbox to3em{\hrulefill}\thinspace}
\providecommand{\MR}{\relax\ifhmode\unskip\space\fi MR }
\providecommand{\MRhref}[2]{%
  \href{http://www.ams.org/mathscinet-getitem?mr=#1}{#2}
}
\providecommand{\href}[2]{#2}

\end{document}